\providecommand{\U}[1]{\protect\rule{.1in}{.1in}}
\newtheorem{theorem}{Theorem}[section]
\theoremstyle{plain}
\newtheorem{question}{Question}
\newtheorem{corollary}[theorem]{Corollary}
\newtheorem{lemma}[theorem]{Lemma}
\newtheorem{proposition}[theorem]{Proposition}
\theoremstyle{definition}
\newtheorem{definition}[theorem]{Definition}
\newtheorem{remark}[theorem]{Remark}
\newtheorem{example}[theorem]{Example}
\numberwithin{equation}{section}
\begin{document}
\title[Coarse $\mathcal{Z}$-boundaries for groups]{Coarse $\mathcal{Z}$-boundaries for groups}
\author{Craig R. Guilbault}
\address{Department of Mathematical Sciences\\
University of Wisconsin-Milwaukee, Milwaukee, WI 53201}
\email{craigg@uwm.edu}
\author{Molly A. Moran}
\address{Department of Mathematics, The Colorado College, Colorado Springs, Colorado 80903}
\email{mmoran@coloradocollege.edu}
\thanks{The authors thank the participants in the UW-Milwaukee Topology Seminar for
useful feedback, with special thanks to Arka Banerjee, Burns Healy, Chris
Hruska, and Boris Okun for detailed comments. Also, many thanks to the referee for their thorough review of the manuscript and helpful suggestions. This research was supported in
part by Simons Foundation Grant 427244, CRG}
\date{January 27, 2021}
\keywords{quasi-action, coarse near-action, model geometry, model $\mathcal{Z}%
$-geometry, $\mathcal{Z}$-structure, $E\mathcal{Z}$-structure, $\mathcal{Z}%
$-boundary, $E\mathcal{Z}$-boundary, coarse $\mathcal{Z}$-structure,
c$\mathcal{Z}$-structure, coarse $\mathcal{Z}$-boundary, c$\mathcal{Z}$-boundary}

\begin{abstract}
We generalize Bestvina's notion of a $\mathcal{Z}$-boundary for a group to
that of a \textquotedblleft coarse $\mathcal{Z}$-boundary.\textquotedblright \space We show that established theorems about $\mathcal{Z}$-boundaries carry over
nicely to the more general theory, and that some wished-for properties of
$\mathcal{Z}$-boundaries become theorems when applied to coarse $\mathcal{Z}%
$-boundaries. Most notably, the property of admitting a coarse $\mathcal{Z}%
$-boundary is a pure quasi-isometry invariant. In the process, we streamline
both new and existing definitions by introducing the notion of a
\textquotedblleft model $\mathcal{Z}$-geometry.\textquotedblright\space In
accordance with the existing theory, we also develop an equivariant version of
the above---that of a \textquotedblleft coarse $E\mathcal{Z}$%
-boundary.\textquotedblright

\end{abstract}
\maketitle

The primary goal of this paper is an expansion of Bestvina's notion of a
$\mathcal{Z}$-boundary. His approach placed Gromov boundaries of torsion-free
hyperbolic groups and visual boundaries of torsion-free CAT(0) groups in a
general framework which allows other classes of groups $G$ to admit a
boundary. Later Dranishnikov relaxed that framework to allow for groups with
torsion. Here we relax the requirements further: instead of a geometric action
of $G$ on a proper metric AR (absolute retract) $X$, we allow for a
\textquotedblleft coarse near-action\textquotedblright\ (a concept that will
be developed here and which contains quasi-actions as special cases). Our
boundaries will be called c$\mathcal{Z}$-boundaries or \textquotedblleft
coarse $\mathcal{Z}$-boundaries.\textquotedblright\space With the new definition in
place, we are able to accomplish a primary goal:

\begin{theorem}
\label{Theorem: primary goal}If a group $G$ admits a c$\mathcal{Z}$-boundary
and $H$ is quasi-isometric to $G$, then $H$ admits a c$\mathcal{Z}$-boundary.
In fact, $G$ and $H$ admit the same c$\mathcal{Z}$-boundaries.
\end{theorem}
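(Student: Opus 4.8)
The plan is to leave the geometry completely untouched and merely transplant the action. Recall that ``$G$ admits the c$\mathcal{Z}$-boundary $Z$'' will mean that $G$ supports a cobounded, (metrically) proper coarse near-action on some model $\mathcal{Z}$-geometry $X$ whose attached compactification is $\overline{X}=X\sqcup Z$; being a model $\mathcal{Z}$-geometry is a condition on the pair $(X,\overline{X})$ alone---a proper metric AR together with a $\mathcal{Z}$-compactification satisfying the relevant nullity condition---and even if that nullity condition were phrased via a group action, coboundedness turns it into the intrinsic statement that bounded subsets of $X$ become small in $\overline{X}$ as they leave every bounded set. Consequently it suffices, given a quasi-isometry $q\colon H\to G$, to build a cobounded, proper coarse near-action of $H$ on the very same $X$.

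First I would produce a quasi-isometry $j\colon H\to X$: the orbit map $f\colon G\to X$, $g\mapsto g\cdot x_{0}$, of the given coarse near-action is a quasi-isometry (the Milnor--\v{S}varc lemma for coarse near-actions, which I would either cite from earlier or reprove by the usual argument, using coboundedness for coarse density and properness for the lower Lipschitz bound), so $j:=f\circ q$ is a quasi-isometry; fix a coarse inverse $\bar{j}\colon X\to H$, so that $\bar{j}\circ j$ and $j\circ\bar{j}$ are uniformly close to the respective identities. Then \emph{define} $\rho_{H}(h):=j\circ L_{h}\circ\bar{j}\colon X\to X$, where $L_{h}$ denotes left translation by $h$ on $H$. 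The conceptual point---and the reason the coarse framework is exactly the right one---is that although $q$ (hence $j$) is in no way a homomorphism, conjugating the genuine isometric action of $H$ on itself through the quasi-isometry $j$ yields a bona fide coarse near-action.

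The rest is verification, routine provided one keeps every estimate uniform in $h$. Each $\rho_{H}(h)$ is a coarse equivalence of $X$ with constants independent of $h$, since $j,\bar{j}$ carry fixed quasi-isometry constants and $L_{h}$ is an isometry. For the near-homomorphism axiom, $\rho_{H}(h_{1})\circ\rho_{H}(h_{2})=j\circ L_{h_{1}}\circ(\bar{j}\circ j)\circ L_{h_{2}}\circ\bar{j}$, and one replaces the near-identity $\bar{j}\circ j$ by the identity at the cost of an additive error controlled by the uniform closeness constant and the Lipschitz constant of $j$ (here using that $L_{h_{1}}$ is an isometry), giving $\rho_{H}(h_{1})\circ\rho_{H}(h_{2})\approx\rho_{H}(h_{1}h_{2})$ with a bound independent of $h_{1},h_{2}$; likewise $\rho_{H}(e)=j\circ\bar{j}\approx\mathrm{id}_{X}$. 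Coboundedness holds because the $\rho_{H}$-orbit of $x_{0}$ is $j\bigl(H\cdot\bar{j}(x_{0})\bigr)=j(H)$, which is coarsely dense since $j$ is a quasi-isometry; and properness holds because $d\bigl(\rho_{H}(h)x_{0},x_{0}\bigr)\le R$ forces $j\bigl(h\,\bar{j}(x_{0})\bigr)$ into a bounded set, so by metric properness of $j$ the point $h\,\bar{j}(x_{0})$, and hence $h$ itself, ranges over a bounded---thus finite---subset of $H$. Therefore $(X,\overline{X})$ together with $\rho_{H}$ realizes $Z$ as a c$\mathcal{Z}$-boundary of $H$; and since ``quasi-isometric to'' is symmetric, the same argument in the other direction gives the converse, so $G$ and $H$ admit exactly the same c$\mathcal{Z}$-boundaries.

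I expect the only genuine work to be bookkeeping: pinning down that each additive constant in the coarse near-action axioms for $\rho_{H}$ depends only on the quasi-isometry data of $j$ and $\bar{j}$ and not on the group elements involved (which is why one must work with the uniform constants of $j,\bar{j}$ rather than pointwise estimates), and confirming that the precise form of ``proper'' in the definition of a coarse near-action is simultaneously strong enough to make the orbit map a quasi-isometry and weak enough to be inherited by $\rho_{H}$. Because $X$ and $\overline{X}$ are never altered, the strengthening from ``admits a c$\mathcal{Z}$-boundary'' to ``admits the same c$\mathcal{Z}$-boundaries'' comes essentially for free.
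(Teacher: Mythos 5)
Your proof is correct and is essentially the paper's own argument: the paper likewise observes that a c$\mathcal{Z}$-structure is just a model $\mathcal{Z}$-geometry (an intrinsic condition on $(\overline{X},Z,d)$, with no group action involved) together with a proper cobounded coarse near-action, and produces the $H$-action on the same $X$ by conjugating left translation through a coarse equivalence $H\to X$ and its coarse inverse (this is the content of its generalized \v{S}varc--Milnor theorem with converse). The only caveat, which does not affect the argument, is that the orbit map $G\to X$ is in general only a coarse equivalence rather than a quasi-isometry, since $X$ is not assumed quasi-geodesic; and the full properness condition must be checked at all pairs of basepoints, not just at $x_{0}$, though this is the routine bookkeeping you anticipate.
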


\noindent Of course, this generalization is only useful if the information
carried by $\mathcal{Z}$-boundaries is also carried by c$\mathcal{Z}%
$-boundaries. In addition to proving new theorems, we revisit established
results and show that their analogs remain valid in the broader context. Along
the way, we introduce the notion of a \emph{model geometry} and a \emph{model
}$\mathcal{Z}$\emph{-geometry} as part of an effort to streamline the axiom
system for $\mathcal{Z}$-boundaries---both coarse and classical. We also
expand upon the important notion of an $E\mathcal{Z}$-boundary.

\section{Introduction\label{Section: Introduction}}

Bestvina \cite{Bes96} and Dranishnikov \cite{Dra06} developed a general theory
of group boundaries which contains Gromov boundaries of hyperbolic groups and
visual boundaries of CAT(0) groups as special cases. These generalized
boundaries---known as $\mathcal{Z}$\emph{-boundaries}---involve a mix of
geometry, topology, and group theory. The necessary ingredients are:

\begin{itemize}
\item[i)] a geometric action of the group $G$ on some proper metric AR
$\left(  X,d\right)  $; and

\item[ii)] a $\mathcal{Z}$-set compactification $\overline{X}=X\sqcup Z$ in
which compacta from $X$ vanish in $\overline{X}$ as they are pushed toward $Z$
by the $G$-action.
\end{itemize}

\noindent When this can be arranged, $Z$ is called a $\mathcal{Z}%
$\emph{-boundary} and $\left(  \overline{X},Z,d\right)  $ a $\mathcal{Z}%
$\emph{-structure} for $G$. In \cite{FaLa05}, Farrell and Lafont introduced
an additional condition (also satisfied by all hyperbolic and CAT(0)\ groups):

\begin{itemize}
\item[iii)] the $G$-action on $X$ extends to a $G$-action on $\overline{X}$
\end{itemize}

\noindent When all three conditions are satisfied, they call $\left(
\overline{X},Z,d\right)  $ an $E\mathcal{Z}$\emph{-structure}
(\emph{equivariant} $\mathcal{Z}$-structure)\emph{ }and $Z$ an $E\mathcal{Z}%
$\emph{-boundary} for $G$.

Many non-hyperbolic, non-CAT(0) groups, for example, Baumslag-Solitar groups
and systolic groups, are now known to admit $\mathcal{Z}$- or $E\mathcal{Z}%
$-structures. But a full characterization of which groups admit $\left(
E\right)  \mathcal{Z}$-structures is an interesting open problem. Section 6 of
\cite{GuMo19} contains a survey of known results; original sources include
\cite{BeMe91}, \cite{Dah03}, \cite{OsPr09}, \cite{Tir11}, \cite{Mar14},
\cite{Gui14}, \cite{GMT19}, and more recently \cite{EnWu18}, \cite{GuMoSc20},
\cite{CCGHO20},and \cite{Pie18}.\medskip

The following theorem from \cite{GuMo19} has its origins in \cite{Bes96}.

\begin{theorem}
[Generalized Boundary Swapping]\label{Theorem: Generalized Boundary Swapping}%
If $G$ is quasi-isometric to a group $H$ which admits a $\mathcal{Z}$-boundary
$Z$, and $G$ acts geometrically on some proper metric AR $X$, then $G$ also
admits $Z$ as a $\mathcal{Z}$-boundary.
\end{theorem}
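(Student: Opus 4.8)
The plan is to transport the $\mathcal{Z}$-structure on $H$ to one on $G$ by using a quasi-isometry $f : G \to H$ together with a simultaneous comparison of the two actions on a common model. Since $G$ acts geometrically on a proper metric AR $X$ and $H$ acts geometrically on a proper metric AR $Y$ admitting a $\mathcal{Z}$-boundary $Z$ (so $\overline{Y} = Y \sqcup Z$), both $X$ and $Y$ are quasi-isometric to their respective Cayley graphs, hence to each other. Fix a quasi-isometry $\phi : X \to Y$ and a coarse inverse $\psi : Y \to X$. The first step is to build, from the geometric $H$-action on $\overline{Y}$ (which need not exist on $\overline{Y}$ a priori, so in fact one should work directly with the pushing condition ii) rather than condition iii)) and the quasi-isometry $\phi$, a candidate compactification $\overline{X} = X \sqcup Z$: a point $x \in X$ is declared close to $\zeta \in Z$ when $\phi(x)$ is close to $\zeta$ in $\overline{Y}$. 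One must check this yields a well-defined topology making $\overline{X}$ compact, metrizable, and ensuring $X$ is open and dense.

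The key technical steps, in order, are: (1) verify that $\overline{X}$ so defined is a metrizable compactification of $X$ — this reduces to checking that $\phi$ extends to a continuous surjection $\overline{\phi} : \overline{X} \to \overline{Y}$ restricting to the identity on $Z$, using that $\phi$ and $\psi$ are coarsely inverse and that quasi-isometries coarsely respect the pushing-to-$Z$ behavior; (2) check the $\mathcal{Z}$-set condition: given a homotopy of $\operatorname{id}_{\overline{Y}}$ pushing $\overline{Y}$ off $Z$ (which exists since $Z$ is a $\mathcal{Z}$-set in the AR $\overline{Y}$), pull it back along $\phi$ and $\psi$, correcting up to the controlled homotopies available because $X$ is an AR, to get the analogous homotopy for $\overline{X}$; (3) verify the nullity/vanishing condition of (ii) for the $G$-action on $\overline{X}$ — here one uses that the $G$-action on $X$ corresponds, under $\phi$, coarsely to the $H$-action on $Y$ (via the quasi-isometry $G \sim H$), so a compactum $C \subset X$ pushed toward $Z$ by $G$ maps to a compactum $\phi(C) \subset Y$ pushed toward $Z$ by $H$ up to bounded error, and bounded error vanishes in the limit.

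The main obstacle is step (2), producing the $\mathcal{Z}$-set instant homotopies on $\overline{X}$ from those on $\overline{Y}$: the naive composite $\psi \circ (\text{homotopy}) \circ \phi$ is only a coarse map and only coarsely a homotopy, so one cannot simply conjugate. The standard remedy — and I expect this is what the authors do, following \cite{Bes96} — is to invoke the AR property of $X$ to straighten coarse maps and coarse homotopies into genuine continuous ones: a coarse map $X \to X$ that is boundedly close to a continuous map can be replaced by that continuous map, and a coarse homotopy can be realized by an actual small homotopy because $X$, being an AR, is locally contractible with uniform control on compacta together with the compactness of $\overline{X}$. Making the ``uniform control'' precise — i.e. that the correction homotopies can be taken arbitrarily small as one approaches $Z$ — is the delicate point and is exactly where the compactness of $\overline{X}$ and the definition of $\mathcal{Z}$-set (as opposed to mere closed subset) are used. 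Once this is in hand, condition (iii) and the equivariance play no role, so the proof works for $\mathcal{Z}$-structures and does not require $E\mathcal{Z}$-structures.
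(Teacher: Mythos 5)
Your proposal is correct and follows essentially the same route as the paper (which defers the proof to \cite[Cor.~5.4 and Th.~7.1]{GuMo19}): use \v{S}varc--Milnor to get a coarse equivalence $X\to Y$, straighten it to a continuous proper map via uniform contractibility and nerve/barycentric techniques afforded by the cocompact AR structure, and then pull the controlled $\mathcal{Z}$-compactification of $Y$ back to $X$, with the control condition (\ddag) doing the work you describe in your steps (2) and (3). Your identification of the straightening of coarse homotopies near $Z$ as the delicate point, and your observation that equivariance of the compactified action plays no role, both match the paper's treatment.
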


It is tempting to conclude that the property of admitting a $\mathcal{Z}%
$-boundary is a quasi-isometry invariant of groups. Unfortunately, that
conclusion would be premature. Consider, for example:\medskip

\noindent\textbf{Question A. }\emph{If }$H\leq G$\emph{ is of finite index,
and }$H$\emph{ admits a }$\mathcal{Z}$\emph{-boundary, does }$G$\emph{ admit a
}$\mathcal{Z}$\emph{-boundary?}\medskip

\noindent The issue is this: Application of Theorem
\ref{Theorem: Generalized Boundary Swapping} requires an AR $X$ on which $G$
acts geometrically. One might hope that the $H$-action on an AR $Y$, implicit
in the hypothesis, can be extended to a $G$-action. Those familiar with an
analogous (open) problem for CAT(0) groups will recognize the difficulty. The
more general problem of finding a geometric action of $G$ on an AR $X$, given
only that $G$ is quasi-isometric to a group $H$ that admits such an action,
appears even more difficult.

In this paper we provide a way around these questions as they pertain to group
boundaries. The key is a relaxation of the main definition to that of a
\emph{coarse} $\mathcal{Z}$\emph{-structure}. Under the new definition, the
requirement of a geometric $G$-action is relaxed to allow for a (proper and
cobounded) \textquotedblleft coarse near-action.\textquotedblright%
\footnote{Coarse near-action is a straightforward generalization of
\emph{quasi-action}. The extra generality is useful when metric spaces are not
necessarily geodesic (or quasi-geodesic). For those who prefer quasi-actions,
nearly all of our definitions and theorems have analogs in that category at
the expense of some mild additional hypotheses. See Section
\ref{Section: Coarse near actions} for details.} When $G$ is quasi-isometric
to a group $H$ which acts geometrically on a proper metric AR $Y$, obtaining a
coarse near-action of $G$ on $Y$ is relatively easy: Use our Theorem
\ref{Theorem: Generalized Svarc-Milnor with converse} (Generalized
\v{S}varc-Milnor) to obtain a coarse equivalence $f:G\rightarrow Y$. Then use
$f$ and a coarse inverse $g:Y\rightarrow G$ to conjugate\ the action of $G$ on
itself to $Y$. The converse portion of Theorem
\ref{Theorem: Generalized Svarc-Milnor with converse} assures that the result
is a coarse near-action on $Y$. When this and the supporting machinery have
been established, we will have:

\begin{theorem}
\label{Theorem: Quasi-isometry invariance for coarse-Z-boundaries}If groups
$G$ and $H$ are quasi-isometric and $\left(  \overline{Y},Z,d\right)  $ is a
$\mathcal{Z}$-structure for $H$, then there is a coarse near-action of $G$ on
$Y$ under which $\left(  \overline{Y},Z,d\right)  $ is a coarse $\mathcal{Z}%
$-structure for $G$.
\end{theorem}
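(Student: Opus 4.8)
The plan is to reduce the statement to the mechanism already sketched in the introduction: transport the $H$-action on itself to an action on $Y$ via a coarse equivalence, then verify the two things needed for a coarse $\mathcal{Z}$-structure, namely that the transported action is a \emph{proper, cobounded coarse near-action} and that $\left(\overline{Y},Z,d\right)$ is \emph{compatible} with it (i.e.\ compacta from $Y$ pushed toward $Z$ by the coarse near-action vanish in $\overline{Y}$). First I would invoke the Generalized \v{S}varc--Milnor theorem (Theorem~\ref{Theorem: Generalized Svarc-Milnor with converse}): since $H$ acts geometrically on the proper metric AR $Y$, this yields a coarse equivalence $\varphi\colon H\to Y$, and since $G$ and $H$ are quasi-isometric, composing with a quasi-isometry $G\to H$ produces a coarse equivalence $f\colon G\to Y$ together with a coarse inverse $g\colon Y\to G$, where $G$ carries a word metric from some finite generating set. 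Then, for $\gamma\in G$, define $\gamma\cdot y := f\bigl(\gamma\cdot g(y)\bigr)$, where $\gamma\cdot g(y)$ means left multiplication in $G$; this is the conjugated action. Each $\gamma\cdot(-)$ is a coarse equivalence of $Y$ with itself (composite of coarse equivalences), with constants controlled uniformly in $\gamma$ because left multiplication is an isometry of $(G,d_G)$ and $f,g$ have fixed coarse parameters.

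The next step is to check that $\gamma\mapsto(\gamma\cdot(-))$ is a coarse near-action in the sense of Section~\ref{Section: Coarse near actions}: that $e\cdot(-)$ is uniformly close to $\mathrm{id}_Y$ (because $f\circ g$ is), that $(\gamma_1\gamma_2)\cdot(-)$ is uniformly close to $(\gamma_1\cdot(-))\circ(\gamma_2\cdot(-))$ with a bound independent of $\gamma_1,\gamma_2$ (because $g\circ f$ is uniformly close to $\mathrm{id}_G$, and this error is carried through the Lipschitz-on-large-scale maps $f,g$), and that the maps are uniformly coarsely Lipschitz. Here is exactly where the converse half of Theorem~\ref{Theorem: Generalized Svarc-Milnor with converse} does the work: it is designed to certify that precisely this conjugation construction yields a \emph{bona fide} coarse near-action rather than something weaker, so I would cite it for this. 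Properness and coboundedness of the action then transfer from the corresponding (obvious) properties of the left-translation action of $G$ on itself, using that $f$ and $g$ are coarse equivalences (proper and cobounded maps): coboundedness because $f(G)$ is coarsely dense in $Y$; properness because preimages under $g$ of bounded sets are bounded and the $G$-action on itself is proper.

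Finally I would verify compatibility with the $\mathcal{Z}$-structure $\left(\overline{Y},Z,d\right)$. The content is: if $K\subseteq Y$ is compact and $(\gamma_n)$ is a sequence in $G$ with $\gamma_n\cdot k_n\to z\in Z$ for points $k_n\in K$ (or, in the appropriate formulation, the sets $\gamma_n\cdot K$ converge to $Z$ in $\overline{Y}$), then the images $\gamma_n\cdot K$ have diameters shrinking in the sense required by the $\mathcal{Z}$-set "vanishing" condition. I would deduce this from the analogous property of the original geometric $H$-action on $Y$, which holds because $\left(\overline{Y},Z,d\right)$ is a $\mathcal{Z}$-structure for $H$: the transported $G$-translates $\gamma\cdot K = f(\gamma g(K))$ are uniformly coarsely close to $H$-translates of a slightly enlarged compactum $g(K)$ (after pushing through the quasi-isometry $G\to H$ and the coarse equivalence $f$ coming from $\varphi$), and the vanishing condition for $\mathcal{Z}$-set compactifications is insensitive to replacing a set by one at bounded Hausdorff distance and to passing to a coarsely equivalent exhausting family of translates. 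This last point—that "pushing toward $Z$" is a coarse notion, so the $\mathcal{Z}$-vanishing condition depends only on the coarse near-action and not on its honest-action origin—is the heart of why coarse $\mathcal{Z}$-structures are the right generalization, and it is the step I expect to require the most care, since one must set up the definition of "a coarse near-action pushes compacta to $Z$" so that it is manifestly a coarse invariant and then match it against the classical formulation for $H$. Once these pieces are assembled, $\left(\overline{Y},Z,d\right)$ together with the conjugated coarse near-action is by definition a coarse $\mathcal{Z}$-structure for $G$, completing the proof.
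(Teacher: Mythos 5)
Your construction of the coarse near-action is exactly the paper's: apply Theorem \ref{Theorem: Generalized Svarc-Milnor with converse} to get a coarse equivalence $G\to Y$ (via $G\to H\to Y$) and conjugate left multiplication through it, with the converse half of that theorem (Proposition 2.11 in the paper) certifying properness, coboundedness, and the uniform near-action estimates. Where you diverge is your third step. In the paper's framework there is nothing left to check at that point: a c$\mathcal{Z}$-structure is \emph{defined} (Definition \ref{Defn. coarse-Z-structure}) as a model $\mathcal{Z}$-geometry together with a proper, cobounded coarse near-action, and the ``compacta vanish at the boundary'' condition has been repackaged as the controlled condition (\ddag) on the compactification $\overline{Y}=Y\sqcup Z$, which is a purely metric statement about $(Y,d)$ and $\overline{Y}$ with no reference to any group action. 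Since $(\overline{Y},Z,d)$ is already a model $\mathcal{Z}$-geometry (being a $\mathcal{Z}$-structure for $H$), the theorem follows the moment the coarse near-action of $G$ is produced. Your third paragraph is therefore not a gap but redundant work: you are essentially reproving the equivalence (cited by the paper from Lemma 6.4 of \cite{GuMo19}) between the action-dependent nullity condition and the action-independent condition (\ddag). The argument you sketch there is sound --- translates $f(\gamma g(K))$ have uniformly bounded $d$-diameter and leave every compactum as $\gamma\to\infty$ by properness, so (\ddag) forces them to become small in $\overline{Y}$ --- and it is a reasonable thing to include if one works from the classical formulation of the definition; but the ``step requiring the most care'' that you anticipate is precisely the issue the paper has engineered away by isolating the notion of a model $\mathcal{Z}$-geometry.
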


The new approach is useful only if a coarse $\mathcal{Z}$-boundary carries
information, comparable to that of an actual $\mathcal{Z}$-boundary. For
example, work by Bestvina, Mess, Geoghegan, Ontaneda, Dranishnikov, and Roe
\cite{BeMe91}, \cite{Bes96}, \cite{Geo86}, \cite{Ont05}, \cite{GeOn07}%
, \cite{Dra06}, \cite{Roe03} has established that, for $\mathcal{Z}$-structures
on a group $G$:

\begin{itemize}
\item the boundary $Z$ is well-defined up to shape equivalence;

\item $\dim Z$ is a group invariant; and

\item the \v{C}ech cohomology of $Z$ reveals the group cohomology of $G$ with
$RG$-coefficients ($R$ a PID).
\end{itemize}

\noindent We will show that these relationships carry over to the realm of
coarse $\mathcal{Z}$-structures. In fact, the added flexibility allows for a
\emph{strengthening} of some of these conclusions.\bigskip

Here is a quick outline of the remainder of this paper. In Section
\ref{Section: Coarse near actions}, we develop the notion of a \emph{coarse
near-action} of a group $G$ on a metric space $\left(  X,d\right)  $ and prove
some fundamental facts---most significantly, a coarse version of the classical
\v{S}varc-Milnor Lemma, along with a crucial converse. In Section
\ref{Section: Model geometries and model Z-geometries} we introduce the
notions of a \emph{model geometry}\ and a \emph{model $\mathcal{Z}$%
}-\emph{geometry}. Those are used in Section
\ref{Section: Z-structures and coarse Z-structures} to define a coarse
$\mathcal{Z}$-\emph{structure}. That definition includes classical
$\mathcal{Z}$-structures as a special case. In addition, we establish an
equivalent---strikingly simple---formulation of a coarse $\mathcal{Z}%
$-structure which highlights the benefits of our approach. In Sections
\ref{Section: Uniqueness and boundary swapping for cZ-structures} and
\ref{Section: Applications of cZ-structures} we show that key theorems about
groups and their $\mathcal{Z}$-structures remain valid for coarse
$\mathcal{Z}$-structures, and that new, stronger versions are often possible.
In Section \ref{Section EZ-structures and cEZ-structures} we define a
\emph{coarse} $E\mathcal{Z}$\emph{-structure} and prove a few basic theorems.
In a final section, we discuss the fundamental questions (such as Question A
above) which motivated this work. These questions go well beyond $\mathcal{Z}%
$-boundaries; some are well-known to the experts. Our goal is to shine a light
on a family of interesting open problems and to highlight their connections to
the study of group boundaries.

\section{Coarse near-actions\label{Section: Coarse near actions}}

\begin{definition}
\label{Defn: near-action}Let $C\geq0$. A $C$-\emph{near-action} of a group $G$
on a metric space $\left(  X,d\right)  $ is a function $\psi$ which assigns to
each $\gamma\in G$ a function $\psi\left(  \gamma\right)  :X\rightarrow X$
satisfying the following conditions:

\begin{enumerate}
\item $d\left(  \psi\left(  1\right)  \left(  x\right)  ,x\right)  \leq C$,
for all $x\in X$, and

\item $d\left(  \psi\left(  \gamma_{1}\gamma_{2}\right)  \left(  x\right)
,\psi\left(  \gamma_{1}\right)  \psi\left(  \gamma_{2}\right)  \left(
x\right)  \right)  \leq C$, for all $x\in X$ and $\gamma_{1},\gamma_{2}\in G$.
\end{enumerate}

\noindent When the constant $C$ is unimportant, we simply refer to $\psi$ as a
\emph{near-action.}
\end{definition}

\begin{definition}
\label{Defn. coarse equivalence}A function $f:\left(  X,d\right)
\rightarrow\left(  Y,d^{\prime}\right)  $ is

\begin{enumerate}
\item \label{Item 1: defn of coarse equivalence}a \emph{coarse embedding }if
there exist nondecreasing functions $\rho_{-},\rho_{+}:[0,\infty
)\rightarrow\lbrack0,\infty)$ with $\rho_{-}\left(  r\right)  \rightarrow
\infty$ as $r\rightarrow\infty$ such that for all $x_{1},x_{2}\in X$,
\[
\rho_{-}\left(  d\left(  x_{1},x_{2}\right)  \right)  \leq d^{\prime}\left(
f(x_{1}),f(x_{2})\right)  \leq\rho_{+}\left(  d\left(  x_{1},x_{2}\right)
\right)
\]

\item \label{Item 2: defn of coarse equivalence}\emph{coarsely surjective }if
there exists $C\geq0$ such that, for all $y\in Y$, there exists $x\in X$ such
that $d^{\prime}\left(  y,f\left(  x\right)  \right)  \leq C$,

\item \label{Item 3: defn of coarse equivalence}a \emph{coarse equivalence }if
it is a coarsely surjective coarse embedding.
\end{enumerate}

\noindent If $\rho_{-}$, $\rho_{+}$ and $C$ are specified, we sometimes call a
function $f$ which satisfies: (\ref{Item 1: defn of coarse equivalence}) a
$\left(  \rho_{-},\rho_{+}\right)  $-coarse embedding;
(\ref{Item 2: defn of coarse equivalence}) $C$-surjective; and
(\ref{Item 3: defn of coarse equivalence}) a $\left(  \rho_{-},\rho
_{+},C\right)  $-coarse equivalence.
\end{definition}

\begin{remark}
\label{Rk: coarse inverse} 
A coarse embedding $f:(X,d)\rightarrow(Y,d^{\prime})$ is a coarse equivalence
if and only if $f$ has a coarse inverse, that is, there is a coarse embedding
$g:(Y,d^{\prime})\rightarrow(X,d)$ such that $gf$ and $fg$ are boundedly close
to $\operatorname*{id}_{X}$ and $\operatorname*{id}_{Y}$, respectively.
\end{remark}

\begin{definition}
\label{Defn. coarse action}Let $\rho_{-},\rho_{+}:[0,\infty)\rightarrow
\lbrack0,\infty)$ be fixed nondecreasing functions with $\rho_{-}\left(
r\right)  \rightarrow\infty$ as $r\to\infty$ and let $C\geq0$ be a fixed constant. A $\left(
\rho_{-},\rho_{+},C\right)  $\emph{-coarse near-action} is a $C$-near-action
$\psi$ of a group $G$ on a metric space $\left(  X,d\right)  $ with the
property that $\psi\left(  \gamma\right)  $ is a $\left(  \rho_{-},\rho
_{+}\right)  $-coarse embedding for all $\gamma\in G$.
\end{definition}

When the specific control functions and constant are unimportant, we simply
refer to $\psi$ as a \emph{coarse near-action}. Even when not specified, it is
important that a single choice of $\left(  \rho_{-},\rho_{+}\right)  $ applies
uniformly to all $\psi\left(  \gamma\right)  $.

\begin{remark}
Note that properties of a $C$-near-action ensure that each $\psi\left(
\gamma\right)  $ is $2C$-surjective and $\psi\left(  \gamma^{-1}\right)  $ is
a coarse inverse for $\psi\left(  \gamma\right)  $. So each $\psi\left(
\gamma\right)  $ is a coarse equivalence.
\end{remark}

The following generalizations of cocompact and proper actions are useful.

\begin{definition}
Let $\psi$ be a coarse near-action of $G$ on $(X,d)$.

\begin{enumerate}
\item $\psi$ is \emph{cobounded} if there exists $x_{0}\in X$ and $R>0$ such
that for all $y\in X$, there exists $\gamma\in G$ such that $d(\psi
(\gamma)(x_{0}),y)<R$.

\item $\psi$ is \emph{proper} if for each $R>0$, there exists $M\in\mathbb{N}$ such that for
all $x,y\in X$,
\[
\left\vert \{\gamma\in G\ |\ \psi(\gamma)(B_{d}[x,R])\cap B_{d} [y,R]\neq
\emptyset\}\right\vert \leq M
\]
where $B_{d}[x,R]$ is the closed metric ball.
\end{enumerate}
\end{definition}

\begin{remark} As an exercise, one can show that Condition (1) in Definition 2.6 is equivalent to the statement that, for each $x\in X$ there exists an $R'>0$ such that for every $y\in X$, there exists $\gamma\in G$ such that $d(\psi
	(\gamma)(x),y)<R'$. With some additional effort, one can identify a single $R'>0$ which works for all $x$.
\end{remark} 

A geometric action (i.e., proper, cocompact, and by isometries) on a proper
metric space is a proper, cobounded, coarse near-action (with $\rho_{-}%
=\rho_{+}=\operatorname*{id}_{[0,\infty)}$ and $C=0$), so previously studied
$\mathcal{Z}$-structures fall within the new framework. Similarly, Definitions
\ref{Defn. coarse equivalence} and \ref{Defn. coarse action} generalize the
notion of \emph{quasi-isometry }and \emph{quasi-action}, where control
functions are required to be of the form $\frac{1}{K}r-\varepsilon$ and
$Kr+\varepsilon$. (To reconcile the definitions, $\frac{1}{K}r-\varepsilon$
can be truncated below at $0$.) Quasi-actions have been widely studied; see
for example, \cite{Tuk86}, \cite{Tuk94}, \cite{Nek97}, \cite{KlLe01},
\cite{MSW03}, \cite{Man06}, \cite{KlLe09}, \cite{MSW11}, \cite{Eis15}, and
\cite{DrKa18}. In addition, uniform actions by coarse equivalences (called
\emph{coarse actions }in \cite{BDM08}) have been studied. To the best of our
knowledge, the notion of a coarse near-action has not appeared before.

\begin{remark}
\label{Remark: Turning coarse equivalences into quasi-isometries}It is a
standard fact (see, for example, \cite[Cor.1.4.14]{NoYu12}) that a coarse
equivalence between quasi-geodesic spaces is always a quasi-isometry. By the
same reasoning, it can be shown that every coarse action on a quasi-geodesic
space is a quasi-action. Since we do not require our geometric models to be
quasi-geodesic spaces, it is more natural for us to work with coarse actions.
The decision to work in this generality is both historical (see earlier papers
on $\mathcal{Z}$-structures) and practical. Aside from the following minimal
condition, which is immediate for path connected spaces, additional
requirements on $X$ would not lead to stronger conclusions; moreover the
additional effort required to maintain our level of generality is minimal.
\end{remark}

\begin{definition}
\label{Defn. coarsely connected}A metric space $\left(  X,d\right)  $ is
\emph{coarsely connected} if there exists $R>0$ such that, for all
$x,x^{\prime}\in X$, there is a finite sequence $x=x_{0},x_{1},\cdots
,x_{n}=x^{\prime}$ of points (an $R$-chain) in $X$ with $d\left(
x_{i},x_{i+1}\right)  \leq R$ for all $0\leq i\leq n-1$.
\end{definition}

The following proposition can be viewed as a generalization of the classical
\v{S}varc-Milnor Lemma as well as \cite[Cor.0.9]{BDM07} and the reverse
implication of \cite[Th.8.4]{Nek97}. As usual, a finitely generated group is
given the word metric with respect to some finite generating set.

\begin{proposition}
\label{Prop: Coarse Svarc-Milnor with finitely generated conclusion}Suppose a
coarsely connected metric space $\left(  X,d\right)  $ admits a proper,
cobounded coarse near-action $\psi$ by a group $G$. Then $G$ is finitely
generated, and for any $x_{0}\in X$, $\gamma\mapsto\psi\left(  \gamma\right)
\left(  x_{0}\right)  $ is a coarse equivalence between $G$ and $X$.
\end{proposition}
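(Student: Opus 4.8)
The plan is to follow the classical proof of the Švarc–Milnor Lemma, substituting the $R_{0}$-chains provided by coarse connectedness (Definition \ref{Defn. coarsely connected}) for geodesic segments and carrying along the error constant $C$ and the control function $\rho_{+}$ of the coarse near-action. Fix $x_{0}\in X$ and let $\phi\colon G\to X$ be the orbit map $\phi(\gamma)=\psi(\gamma)(x_{0})$. I would first record two facts. Coboundedness, combined with the uniform bound $d(\psi(\gamma)(x_{0}),\psi(\gamma)(x_{1}))\le\rho_{+}(d(x_{0},x_{1}))$, shows there is $R_{2}\ge 0$ with every point of $X$ within $R_{2}$ of $\phi(G)$; i.e.\ $\phi$ is coarsely surjective. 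Second, chasing conditions (1) and (2) of Definition \ref{Defn: near-action} and using that each $\psi(\gamma)$ is a $(\rho_{-},\rho_{+})$-coarse embedding gives the \emph{displacement estimate}: if $d(\phi(\alpha),\phi(\beta))\le D$ then $d\big(\phi(\alpha^{-1}\beta),x_{0}\big)\le\rho_{+}(D)+3C$ (pass from $\phi(\alpha^{-1}\beta)$ to $\psi(\alpha^{-1})\psi(\beta)(x_{0})$ to $\psi(\alpha^{-1})\psi(\alpha)(x_{0})$ to $x_{0}$).

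Next I would build a finite generating set. With $R_{0}$ a coarse-connectedness constant, choose $D'\ge\rho_{+}(R_{0}+2R_{2})+3C$, enlarged enough that $S:=\{\gamma\in G:d(\phi(\gamma),x_{0})\le D'\}$ is symmetric (again using that $\psi(\gamma^{-1})$ coarsely inverts $\psi(\gamma)$). \emph{Finiteness of $S$} is exactly where properness enters: each $\gamma\in S$ satisfies $\psi(\gamma)(x_{0})\in B_{d}[x_{0},D']$ and $x_{0}\in B_{d}[x_{0},D']$, so $\psi(\gamma)(B_{d}[x_{0},D'])\cap B_{d}[x_{0},D']\ne\emptyset$, and properness with $R=D'$ and $x=y=x_{0}$ bounds $|S|$. \emph{$S$ generates $G$}: given $\gamma\in G$, pick an $R_{0}$-chain $x_{0}=y_{0},\dots,y_{n}=\phi(\gamma)$ and, by coarse surjectivity, elements $\gamma_{i}\in G$ with $d(y_{i},\phi(\gamma_{i}))\le R_{2}$, taking $\gamma_{0}=1$ and $\gamma_{n}=\gamma$; then $d(\phi(\gamma_{i}),\phi(\gamma_{i+1}))\le R_{0}+2R_{2}$, so the displacement estimate puts $\gamma_{i}^{-1}\gamma_{i+1}$ in $S$, and $\gamma=\prod_{i=0}^{n-1}(\gamma_{i}^{-1}\gamma_{i+1})\in\langle S\rangle$. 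Hence $G$ is finitely generated; equip it with the word metric $d_{G}$ relative to $S$.

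It remains to check that $\phi\colon(G,d_{G})\to(X,d)$ is a coarse equivalence. Coarse surjectivity is already done. For an upper bound, conditions (1),(2) of Definition \ref{Defn: near-action} and the coarse-embedding estimate for $\psi(\alpha)$ give $d(\phi(\alpha),\phi(\alpha s))\le C+\rho_{+}(D')=:K$ for every $s\in S$ and every $\alpha$; telescoping along a shortest word for $\alpha^{-1}\beta$ yields $d(\phi(\alpha),\phi(\beta))\le K\,d_{G}(\alpha,\beta)$. For the lower bound, the displacement estimate shows $d(\phi(\alpha),\phi(\beta))\le r$ forces $\alpha^{-1}\beta\in F(r):=\{\gamma:d(\phi(\gamma),x_{0})\le\rho_{+}(r)+3C\}$, which is finite by the properness argument above, so $d_{G}(\alpha,\beta)\le g(r):=\max_{\gamma\in F(r)}|\gamma|_{S}$, a nondecreasing function of $r$. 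Passing to the pseudo-inverse $\rho_{-}'(s):=\sup\{r\ge 0:g(r)<s\}$—finite for each $s$ when $G$ is infinite, the finite case being trivial—produces a nondecreasing $\rho_{-}'$ with $\rho_{-}'(s)\to\infty$ and $\rho_{-}'(d_{G}(\alpha,\beta))\le d(\phi(\alpha),\phi(\beta))$, exhibiting $\phi$ as a coarse equivalence with control functions $\rho_{-}'$ and $Kr$. The main point requiring care—beyond tracking constants through the non-equivariance of $\psi$—is that coarse connectedness gives no bound on the chain length $n$, so, unlike in the classical argument, the linear upper bound is not read off geometrically but must be obtained algebraically from finiteness of $S$; the lower bound's quantitative content, meanwhile, comes entirely from properness.
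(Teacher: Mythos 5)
Your proof is correct and follows essentially the same strategy as the paper's: extract a finite generating set $S$ from coarse connectedness, coboundedness, and properness, then read off the control functions for the orbit map from properness again. The only differences are cosmetic --- you prove generation by chaining directly rather than by the paper's connectedness/contradiction argument with the sets $A$ and $B$, and you get a linear upper control by telescoping along words where the paper settles for the nondecreasing function $\rho_{-}^{-1}\left(3C+\sigma_{+}(r)\right)$; just note that enlarging $D'$ does not literally make the sublevel set $S$ symmetric (replace $S$ by $S\cup S^{-1}$, still finite, instead), and that $d(y_{0},\phi(\gamma_{0}))\leq C$ rather than $\leq R_{2}$ when $\gamma_{0}=1$.
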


\begin{proof}
Let $\psi$ be a $\left(  \rho_{-},\rho_{+},C\right)  $-coarse near-action on
$\left(  X,d\right)  $ and $x_{0}\in X$. For use later in this proof, note
that we may replace $\rho_{-}$ by an even smaller function (still called
$\rho_{-}$), which is identically $0$ on an interval $[0,B]$, and strictly
increasing to infinity on $[B,\infty)$ for $B>0$.

Choose a single constant $R>0$ satisfying Definition
\ref{Defn. coarsely connected} and such that $\left\{  \psi\left(
\gamma\right)  \left(  x_{0}\right)  \mid\gamma\in G\right\}  $ is $R$-dense
in $X$. Let
\[
S=\left\{  \gamma\in G\mid d\left(  \psi\left(  \gamma\right)  \left(
x_{0}\right)  ,x_{0}\right)  \leq\rho_{+}\left(  3R\right)  +3C\right\}
\]
By properness, $S$ is finite. Let $H\leq G$ be the subgroup generated by $S$.
We claim that $H=G$.

Suppose otherwise. Let
\begin{align*}
A  &  =%
%TCIMACRO{\dbigcup \limits_{g\in H}}%
%BeginExpansion
{\displaystyle\bigcup\limits_{g\in H}}
%EndExpansion
B_{d}\left[  \psi\left(  g\right)  \left(  x_{0}\right)  ,R\right]  \text{,
and}\\
B  &  =%
%TCIMACRO{\dbigcup \limits_{g\in G\backslash H}}%
%BeginExpansion
{\displaystyle\bigcup\limits_{g\in G\backslash H}}
%EndExpansion
B_{d}\left[  \psi\left(  g\right)  \left(  x_{0}\right)  ,R\right]
\end{align*}
Note that $A\cup B=X$, and neither set is empty. So there exists an $R$-chain
connecting a point of $A$ to a point of $B$, and within that chain there exist
points $a\in A$ and $b\in B$ with $d\left(  a,b\right)  \leq R$. Choose
$g_{1}\in H$ and $g_{2}\in G\backslash H$ such that $d\left(  a,\psi\left(
g_{1}\right)  \left(  x_{0}\right)  \right)  \leq R$ and $d\left(
b,\psi\left(  g_{2}\right)  \left(  x_{0}\right)  \right)  \leq R$. Then

\begin{itemize}
\item $d\left(  x_{0},\psi\left(  g_{1}^{-1}\right)  \psi\left(  g_{1}\right)
\left(  x_{0}\right)  \right)  \leq2C$,

\item $d\left(  \psi\left(  g_{1}^{-1}\right)  \psi\left(  g_{1}\right)
\left(  x_{0}\right)  ,\psi\left(  g_{1}^{-1}\right)  \psi\left(
g_{2}\right)  \left(  x_{0}\right)  \right)  \leq\rho_{+}\left(  3R\right)  $, and

\item $d\left(  \psi\left(  g_{1}^{-1}\right)  \psi\left(  g_{2}\right)
\left(  x_{0}\right)  ,\psi(g_{1}^{-1}g_{2})\left(  x_{0}\right)  \right)
\leq C$.
\end{itemize}

By the triangle inequality, $d\left(  x_{0},\psi(g_{1}^{-1}g_{2})\left(
x_{0}\right)  \right)  \leq\rho_{+}\left(  3R\right)  +3C$, so $g_{1}%
^{-1}g_{2}\in S$. But then $g_{2}=g_{1}\left(  g_{1}^{-1}g_{2}\right)  \in H$,
a contradiction. The claim follows.\medskip

Next define $\sigma_{-},\sigma_{+}:[0,\infty)\rightarrow\lbrack0,\infty)$ by%
\begin{align*}
\sigma_{-}(r)  &  =\inf\left\{  d\left(  x_{0},\psi\left(  \gamma\right)
\left(  x_{0}\right)  \right)  \mid d\left(  1,\gamma\right)  \geq r\right\}
\text{, and }\\
\sigma_{+}(r)  &  =\sup\left\{  d\left(  x_{0},\psi\left(  \gamma\right)
\left(  x_{0}\right)  \right)  \mid d\left(  1,\gamma\right)  \leq r\right\}
\end{align*}
Notice that both $\sigma_{-}$ and $\sigma_{+}$ are nondecreasing, $\sigma
_{-}(r)\leq\sigma_{+}(r)$ for all $r$, and (by properness), $\sigma_{-}\left(
r\right)  \rightarrow\infty$ as $r\rightarrow\infty$.

Let $\gamma_{1},\gamma_{2}\in G$. Then
\begin{align*}
\rho_{-}\left(  d\left(  \psi\left(  \gamma_{1}\right)  \left(  x_{0}\right)
,\psi\left(  \gamma_{2}\right)  \left(  x_{0}\right)  \right)  \right)   &
\leq d\left(  \psi\left(  \gamma_{1}^{-1}\right)  \psi\left(  \gamma
_{1}\right)  \left(  x_{0}\right)  ,\psi\left(  \gamma_{1}^{-1}\right)
\psi\left(  \gamma_{2}\right)  \left(  x_{0}\right)  \right) \\
&  \leq d\left(  \psi\left(  \gamma_{1}^{-1}\right)  \psi\left(  \gamma
_{1}\right)  \left(  x_{0}\right)  ,x_{0})+d(x_{0},\psi\left(  \gamma_{1}%
^{-1}\right)  \psi\left(  \gamma_{2}\right)  \left(  x_{0}\right)  \right)
)\\
&  \leq2C+[C+d\left(  x_{0},\psi\left(  \gamma_{1}^{-1}\gamma_{2}\right)
\left(  x_{0}\right)  \right)  ]\\
&  \leq3C+\sigma_{+}\left(  d\left(  \gamma_{1},\gamma_{2}\right)  \right)
\end{align*}

Therefore
\begin{equation}
d\left(  \psi\left(  \gamma_{1}\right)  \left(  x_{0}\right)  ,\psi\left(
\gamma_{2}\right)  \left(  x_{0}\right)  \right)  \leq\rho_{-}^{-1}\left(
3C+\sigma_{+}\left(  d\left(  \gamma_{1},\gamma_{2}\right)  \right)  \right)
\end{equation}
where $\rho_{-}^{-1}:[0,\infty)\rightarrow\lbrack B,\infty)$ is \emph{defined}
to be the inverse of $\left.  \rho_{-}\right\vert _{[B,\infty)}$. Since
$\rho_{-}^{-1}$ strictly increases to infinity, we may define $\tau_{+}%
:[0,\infty)\rightarrow\lbrack0,\infty)$ by
\[
\tau_{+}\left(  r\right)  =\rho_{-}^{-1}\left(  3C+\sigma_{+}(r)\right)
\]
to obtain an upper control function satisfying the inequality
\[
d\left(  \psi\left(  \gamma_{1}\right)  \left(  x_{0}\right)  ,\psi\left(
\gamma_{2}\right)  \left(  x_{0}\right)  \right)  \leq\tau_{+}\left(  d\left(
\gamma_{1},\gamma_{2}\right)  \right)
\]

As for the lower control function, note that%
\begin{align*}
\rho_{-}\sigma_{-}(d\left(  \gamma_{1},\gamma_{2}\right)  )  &  =\rho
_{-}\sigma_{-}(d\left(  1,\gamma_{1}^{-1}\gamma_{2}\right)  )\\
&  \leq\rho_{-}(d(x_{0},\psi\left(  \gamma_{1}^{-1}\gamma_{2}\right)  \left(
x_{0}\right)  ))\\
&  \leq d(\psi\left(  \gamma_{1}\right)  \left(  x_{0}\right)  ,\psi\left(
\gamma_{1}\right)  \psi\left(  \gamma_{1}^{-1}\gamma_{2}\right)  \left(
x_{0}\right)  )\\
&  \leq d\left(  \psi\left(  \gamma_{1}\right)  \left(  x_{0}\right)
,\psi\left(  \gamma_{2}\right)  \left(  x_{0}\right)  \right)  +d\left(
\psi\left(  \gamma_{2}\right)  \left(  x_{0}\right)  ,\psi\left(  \gamma
_{1}\right)  \psi\left(  \gamma_{1}^{-1}\gamma_{2}\right)  \left(
x_{0}\right)  \right) \\
&  \leq d\left(  \psi\left(  \gamma_{1}\right)  \left(  x_{0}\right)
,\psi\left(  \gamma_{2}\right)  \left(  x_{0}\right)  \right)  +C
\end{align*}

Therefore
\begin{equation}
\rho_{-}\sigma_{-}(d\left(  \gamma_{1},\gamma_{2}\right)  )-C\leq d\left(
\psi\left(  \gamma_{1}\right)  \left(  x_{0}\right)  ,\psi\left(  \gamma
_{2}\right)  \left(  x_{0}\right)  \right)
\end{equation}

Define $\tau_{-}:[0,\infty)\rightarrow\lbrack0,\infty)$ by
\[
\tau_{-}\left(  r\right)  =\left\{
\begin{array}
[c]{cc}%
0 & \text{if }\rho_{-}\sigma_{-}(r)-C<0\\
\rho_{-}\sigma_{-}(r)-C & \text{otherwise}%
\end{array}
\right.
\]
Since the right-hand side of (2.2) is $\geq0$, the inequality
\[
\tau_{-}\left(  d\left(  \gamma_{1},\gamma_{2}\right)  \right)  \leq d\left(
\psi\left(  \gamma_{1}\right)  \left(  x_{0}\right)  ,\psi\left(  \gamma
_{2}\right)  \left(  x_{0}\right)  \right)
\]
holds.
\end{proof}

The next proposition generalizes \cite[Appendix A]{Eis15} and the forward
implication of \cite[Th.8.4]{Nek97}.

\begin{proposition}
If a metric space $\left(  X,d\right)  $ is coarsely equivalent to a
countable\footnote{Countability is needed to endow $G$ with a proper word
length metric, well-defined up to coarse equivalence. See \cite[\S 1.2]%
{NoYu12}.} group $G$ then $X$ admits a proper cobounded coarse near-action by
$G$.
\end{proposition}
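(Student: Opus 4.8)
The plan is to conjugate the left-translation action of $G$ on itself across a coarse equivalence, exactly as the introduction suggests. First I would fix, as permitted by the footnote to the statement, a proper left-invariant metric $d_{G}$ on the countable group $G$; then each left translation $L_{\gamma}:G\rightarrow G$, $h\mapsto\gamma h$, is an isometry of $(G,d_{G})$. By hypothesis (and Remark \ref{Rk: coarse inverse}) there is a coarse equivalence $f:(G,d_{G})\rightarrow(X,d)$ with a coarse inverse $g:(X,d)\rightarrow(G,d_{G})$; write $(\rho_{-}^{f},\rho_{+}^{f})$ and $(\rho_{-}^{g},\rho_{+}^{g})$ for their control functions and let $D\geq0$ be a constant with $d_{G}(g(f(\gamma)),\gamma)\leq D$ and $d(f(g(x)),x)\leq D$ for all $\gamma\in G$ and $x\in X$. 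Define $\psi(\gamma):X\rightarrow X$ by $\psi(\gamma)(x)=f(\gamma\cdot g(x))$.

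Next I would verify the requirements of a proper, cobounded coarse near-action. That $\psi$ is a $C$-near-action with $C=\max\{D,\rho_{+}^{f}(D)\}$: condition (1) is just $d(\psi(1)(x),x)=d(f(g(x)),x)\leq D$; for condition (2), putting $w=\gamma_{2}g(x)$ we have $d_{G}(g(f(w)),w)\leq D$, hence $d_{G}(\gamma_{1}g(f(w)),\gamma_{1}w)\leq D$ by left-invariance, and applying $f$ gives $d(\psi(\gamma_{1})\psi(\gamma_{2})(x),\psi(\gamma_{1}\gamma_{2})(x))\leq\rho_{+}^{f}(D)$. That each $\psi(\gamma)$ is a $(\rho_{-}^{f}\circ\rho_{-}^{g},\ \rho_{+}^{f}\circ\rho_{+}^{g})$-coarse embedding, uniformly in $\gamma$: compose the coarse-embedding inequalities for $g$ and then for $f$, using $d_{G}(\gamma g(x),\gamma g(y))=d_{G}(g(x),g(y))$; the composite lower control function is still nondecreasing and tends to infinity. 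So $\psi$ is a coarse near-action with control functions independent of $\gamma$.

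For coboundedness, fix any $x_{0}\in X$ and set $\gamma_{0}=g(x_{0})$; then $\{\psi(\gamma)(x_{0})\mid\gamma\in G\}=\{f(\gamma\gamma_{0})\mid\gamma\in G\}=f(G)$, which is coarsely dense in $X$ because $f$ is coarsely surjective, supplying the required $R$. Properness is the step needing the most care. Suppose $\psi(\gamma)(B_{d}[x,R])\cap B_{d}[y,R]\neq\emptyset$, witnessed by some $z$ with $d(z,x)\leq R$ and $d(f(\gamma g(z)),y)\leq R$. Applying $g$ and using $d_{G}(g(f(\gamma g(z))),\gamma g(z))\leq D$ yields $d_{G}(\gamma g(z),g(y))\leq\rho_{+}^{g}(R)+D$, while $d(z,x)\leq R$ gives $d_{G}(\gamma g(x),\gamma g(z))=d_{G}(g(x),g(z))\leq\rho_{+}^{g}(R)$; combining, $\gamma g(x)\in B_{d_{G}}[g(y),R']$ with $R'=2\rho_{+}^{g}(R)+D$. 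Since $\gamma\mapsto\gamma g(x)$ is a bijection of $G$ and, by left-invariance of $d_{G}$, every closed $d_{G}$-ball of radius $R'$ has the same cardinality $M=|B_{d_{G}}[1,R']|$, which is finite by properness of $d_{G}$, the set of admissible $\gamma$ has at most $M$ elements, a bound independent of $x$ and $y$. Hence $\psi$ is proper and the proposition follows.

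I would flag that the single delicate point is obtaining in the properness condition a bound $M$ uniform over all pairs $x,y\in X$: this is precisely where left-invariance of $d_{G}$ (to translate each ball to the identity) and properness of $d_{G}$ (to make those balls finite) are both essential — without left-invariance one gets only local finiteness, not the uniform $M$ that Definition 2.6(2) demands. Everything else is a routine propagation of the coarse-equivalence constants through the formula $\psi(\gamma)(x)=f(\gamma g(x))$.
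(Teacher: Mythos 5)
Your proposal is correct and follows essentially the same route as the paper: conjugate the left-translation action of $G$ through a coarse equivalence $f$ and its coarse inverse $g$ via $\psi(\gamma)=f\circ\gamma\circ g$, then propagate the control constants. Your properness argument (translating to a single ball at the identity via left-invariance to get a uniform $M$) is a slightly more explicit version of the paper's reduction to the properness of the left action of $G$ on itself, but it is the same idea.
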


\begin{proof}
Let $f_{1}:G\rightarrow X$ be a $(\rho_{-},\rho_{+},C)$-coarse equivalence,
where, as in the proof of Proposition \ref{Prop: Coarse Svarc-Milnor with finitely generated conclusion}, $\rho_{-}$ is identically $0$ on
some interval $[0,B]$ and strictly increasing on $[B,\infty]$; so $\rho
_{-}^{-1}$ is defined on $[B,\infty)$. Let $f_{2}:X\rightarrow G$ be a coarse inverse of $f_1$. Then $f_2$ is a 
$\left(  \sigma_{-},\sigma_{+}\right)  $-coarse embedding with the property that
$d(x,f_{1}f_{2}(x))\leq C$ for all $x\in X$ and $d(\gamma,f_{2}f_{1}%
(\gamma))\leq C$ for all $\gamma\in G$. We will show that $\psi(\gamma
):X\rightarrow X$, defined by $\psi(\gamma)=f_{1}\circ\gamma\circ f_{2}$
(where $\gamma$ simultaneously represents an element of $G$ and the isometry
of $G$ defined by left multiplication by $\gamma$) determines a coarse
near-action of $G$ on $X$.

First note that, for each $\gamma\in G$, $f_{1}\circ\gamma\circ f_{2}$ is a
$\left(  \rho_{-}\circ\sigma_{-},\rho_{+}\circ\sigma_{+}\right)  $-coarse
embedding. Thus, to show that $\psi$ is a coarse near-action, we only need to
check the conditions of Definition \ref{Defn. coarse action}.

We observe first that $\psi(1)=f_{1}f_{2}$, which is of bounded distance $\leq C$
from $\operatorname*{id}_{X}$. Next let $\gamma_{1},\gamma_{2}\in G$ and $x\in
X$. By the near inversive properties of $f_{1}$ and $f_{2}$,
\[
d\left(  f_{2}f_{1}\left(  \gamma_{2} f_{2}\left(  x\right)  \right)
,\gamma_{2}f_{2}\left(  x\right)  \right)  \leq C
\]
Left-multiplication by $\gamma_{1}$ yields
\[
d\left(  \gamma_{1}f_{2}f_{1}\left(  \gamma_{2}f_{2}\left(  x\right)  \right)
,\gamma_{1}\gamma_{2}f_{2}\left(  x\right)  \right)  \leq C
\]
Plugging both terms into $f_{1}$ gives
\[
d\left(  f_{1}\gamma_{1}f_{2}f_{1}\left(  \gamma_{2}f_{2}\left(  x\right)
\right)  ,f_{1}\gamma_{1}\gamma_{2}f_{2}\left(  x\right)  \right)  \leq
\rho_{+}(C)
\]

In other words,
\[
d(\psi(\gamma_{1})\psi(\gamma_{2})(x),\psi(\gamma_{1}\gamma_{2})(x))\leq
\rho_{+}(C)
\]
Letting $C^{\prime}=\max\left\{  C,\rho_{+}(C)\right\}  $, we have shown that
$\psi$ is a $\left(  \rho_{-}\circ\sigma_{-},\rho_{+}\circ\sigma_{+}
,C^{\prime}\right)  $-coarse near-action of $G$ on $X$.

Now, we show that this coarse near-action $\psi$ of $G$ on $X$ is proper and cobounded.

\noindent First, observe that for each $\gamma\in G$,
\begin{align*}
d(f_{1}(\gamma), \psi(\gamma)(f_{1}(1)))  &  =d(f_{1}%
(\gamma), f_{1}\gamma f_{2}(f_{1}(1)))\\
&  \leq\rho_{+}(d(\gamma, \gamma f_{2}(f_{1}(1))))\\
&  = \rho_{+}(d(1, f_{2}(f_{1}(1))))\\
&  \leq\rho_{+}(C)\\
&  \leq C^{\prime}%
\end{align*}

Set $x_0=f_1(1)$ and $R=C'+C$. Let $y\in X$. Since $f_{1}$ is a coarse equivalence, choose $\gamma\in G$
such that $d(f_{1}(\gamma), y)\leq C$.  Then,
\begin{align*}
&  d(\psi(\gamma)(x_0),y)\\
& =d(\psi(\gamma)(f_1(1)),y)\\
&  \leq d(\psi(\gamma)(f_{1}(1)), f_{1}(\gamma))+d(f_{1}(\gamma), y)\\
&  \leq C^{\prime}+C
\end{align*}
proving that $\psi$ is cobounded. 

To show that $\psi$ is proper, let $R>0$ and
$x,y\in X$. We wish to show that
\[
\{\gamma\in G|\psi(\gamma)(B[x,R])\cap B[y,R]\neq\emptyset\}\subset\{\gamma\in
G|\gamma(B[\gamma_{1},M])\cap B[\gamma_{2},M]\neq\emptyset\}
\]
for some $\gamma_{1},\gamma_{2}\in G$ and $M=\rho_{-}^{-1}(2C^{\prime}%
+R+\rho_{+}\sigma_{+}(C^{\prime}))$. Since the latter set is finite ($G$ acts
properly on itself), we can conclude that $\psi$ is proper.

Thus, choose a $\gamma\in G$ so that $\psi(\gamma)(B[x,R])\cap B[y,R]\neq
\emptyset$. Let $z\in B[x,R]$ so that $\psi(\gamma)(z)\in B[y,R]$. Since
$f_{1}$ is a coarse equivalence, there is $\gamma_{1},\gamma_{2},\gamma_{3}\in
G$ so that $d(f_{1}(\gamma_{1}),x)\leq C^{\prime}$, $d(f_{1}(\gamma
_{2}),y)\leq C^{\prime}$ and $d(f_{1}(\gamma_{3}), z)\leq C^{\prime}$. From
this we observe that%

\[
d(\psi(\gamma)(z), \psi(\gamma)(f_{1}(\gamma_{3})))\leq\rho_{+}\sigma
_{+}(C^{\prime})
\]
Moreover, we have:
\begin{align*}
&  d(f_{1}(\gamma_{2}), f_{1}(\gamma\gamma_{3}))\\
&  \leq d(f_{1}(\gamma_{2}), y)+d(y,\psi(\gamma)(z))+d(\psi(\gamma)(z),
\psi(\gamma)(f_{1}(\gamma_{3})))+d(\psi(\gamma)(f_{1}(\gamma_{3})),
f_{1}(\gamma\gamma_{3}))\\
&  \leq2C^{\prime}+R+\rho_{+}\sigma_{+}(C^{\prime})
\end{align*}
Since $f_{1}$ is a coarse-equivalence, using the previous inequality, we have
\begin{align*}
\rho_{-}(d(\gamma_{2}, \gamma\gamma_{3})  &  \leq2C^{\prime}+R+\rho_{+}%
\sigma_{+}(C^{\prime})
\end{align*}
Hence $d(\gamma_{2},\gamma\gamma_{3})\leq\rho_{-}^{-1}(2C^{\prime}+R+\rho
_{+}\sigma_{+}(C^{\prime}))\leq M$, proving $\gamma\gamma_{3}\in B[\gamma
_{2},M]$. Furthermore, we know
\begin{align*}
d(f_{1}(\gamma_{1}), f_{1}(\gamma_{3}))  &  \leq d(f_{1}(\gamma_{1}),x)+d(x,
z)+d(z, f_1(\gamma_{3}))\\
&  \leq2C^{\prime}+R
\end{align*}
Once again, using the previous inequality we see that $d(\gamma_{1},\gamma
_{3})\leq\rho_{-}^{-1}(2C^{\prime}+R)\leq M$ proving that $\gamma\gamma_{3}%
\in\gamma(B[\gamma_{1},M])$. Thus, we have $\gamma\gamma_{3}\in\{\gamma\in
G|\gamma(B[\gamma_{1},M])\cap B[\gamma_{2},M]\neq\emptyset\}$.
\end{proof}

Combining the previous two propositions and restricting to the cases of
primary interest (for our purposes) gives us the following.

\begin{theorem}
[Generalized \v{S}varc-Milnor with converse]%
\label{Theorem: Generalized Svarc-Milnor with converse}Let $\left(
X,d\right)  $ be a coarsely connected metric space and $G$ a finitely
generated group. Then $X$ admits a proper, cobounded coarse near-action by $G$
if and only if $X$ is coarsely equivalent to $G$.
\end{theorem}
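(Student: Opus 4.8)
The plan is to obtain the theorem as an immediate consequence of the two preceding propositions, once we reconcile the two notions of a metric on a finitely generated group. First I would observe that for a finitely generated group $G$, the word metric with respect to a finite generating set is quasi-isometric (hence coarsely equivalent) to a proper word-length metric of the type used in Proposition~2.13, and in any case $G$ is countable, so Proposition~2.13 applies verbatim with the word metric as well. Thus the forward direction of the biconditional is exactly what we want from the combination: if $X$ is coarsely equivalent to $G$ (with its word metric), then Proposition~2.13 produces a proper, cobounded coarse near-action of $G$ on $X$. Since coarse connectedness of $X$ is not needed for that implication, no extra hypothesis intervenes.

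For the reverse direction, suppose $X$ is coarsely connected and admits a proper, cobounded coarse near-action $\psi$ by $G$. Then Proposition~2.11 applies and yields two conclusions: that $G$ is finitely generated (consistent with our standing hypothesis), and that for any basepoint $x_0 \in X$ the orbit map $\gamma \mapsto \psi(\gamma)(x_0)$ is a coarse equivalence between $G$ and $X$. This is precisely the assertion that $X$ is coarsely equivalent to $G$. Here I would take a moment to note that the coarse equivalence class of $G$ as a metric space is independent of the chosen finite generating set — different word metrics on a finitely generated group are bi-Lipschitz, hence coarsely equivalent — so the statement ``$X$ is coarsely equivalent to $G$'' is unambiguous.

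The only subtlety, and the one place where a little care is warranted, is the compatibility between the ``proper word-length metric'' phrasing used for countable groups in Proposition~2.13 (see the footnote there and \cite[\S1.2]{NoYu12}) and the finite-generating-set word metric used throughout the $\mathcal{Z}$-structure discussion. I would address this by remarking that when $G$ is finitely generated these two metrics coincide up to coarse equivalence (indeed, a finite generating set gives a proper word-length function, and any two such are coarsely equivalent), so ``coarsely equivalent to $G$'' means the same thing under either convention. With that identification in hand, the theorem follows: one direction from Proposition~2.11, the other from Proposition~2.13. I do not anticipate any real obstacle — the content is entirely in the two propositions, and this statement merely packages them in the finitely generated setting.

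\begin{proof}
Throughout, we equip the finitely generated group $G$ with the word metric associated to some finite generating set; any two such metrics are bi-Lipschitz equivalent, and each is coarsely equivalent to the proper word-length metric referenced in Proposition~\ref{Prop: Coarse Svarc-Milnor with finitely generated conclusion} and the statement preceding it, so the coarse equivalence type of $G$ is well-defined.

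Suppose first that $X$ admits a proper, cobounded coarse near-action $\psi$ by $G$. Since $X$ is coarsely connected, Proposition~\ref{Prop: Coarse Svarc-Milnor with finitely generated conclusion} applies: for any $x_0 \in X$, the map $\gamma \mapsto \psi(\gamma)(x_0)$ is a coarse equivalence between $G$ and $X$. Hence $X$ is coarsely equivalent to $G$.

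Conversely, suppose $X$ is coarsely equivalent to $G$. As $G$ is finitely generated, it is countable, so the preceding proposition applies and produces a proper, cobounded coarse near-action of $G$ on $X$.
\end{proof}
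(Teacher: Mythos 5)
Your proof is correct and matches the paper exactly: the paper obtains this theorem by the one-line observation ``Combining the previous two propositions and restricting to the cases of primary interest\dots,'' which is precisely your decomposition into the two directions via Proposition~\ref{Prop: Coarse Svarc-Milnor with finitely generated conclusion} and the countable-group proposition. Your added remark reconciling the word metric with the proper word-length metric is a harmless (and reasonable) elaboration of a point the paper leaves implicit.
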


\begin{corollary}
A quasi-geodesic metric space $\left(  X,d\right)  $ admits a proper,
cobounded quasi-action by $G$ if and only if $G$ is finitely generated and
quasi-isometric to $X$.
\end{corollary}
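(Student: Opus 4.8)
The plan is to obtain this corollary as a repackaging of Theorem~\ref{Theorem: Generalized Svarc-Milnor with converse} and Proposition~\ref{Prop: Coarse Svarc-Milnor with finitely generated conclusion}, using Remark~\ref{Remark: Turning coarse equivalences into quasi-isometries} to pass between the coarse and quasi-isometric worlds. First I would record two elementary observations. (a) A quasi-geodesic metric space is coarsely connected: sampling a $(K,\varepsilon)$-quasi-geodesic joining two points at integer parameter values yields a $(K+\varepsilon)$-chain between them; likewise a finitely generated group with a word metric is $(1,1)$-quasi-geodesic. (b) A proper, cobounded quasi-action of $G$ on $X$ is exactly a proper, cobounded coarse near-action whose (uniform) control functions $\rho_\pm$ may be taken of the affine form $\max\{0,\tfrac1K r-\varepsilon\}$ and $Kr+\varepsilon$; conversely, on a quasi-geodesic space every coarse near-action is a quasi-action, since each $\psi(\gamma)$ is then a coarse equivalence of $X$ with uniformly fixed control data, hence a quasi-isometry with uniform constants by the reasoning of Remark~\ref{Remark: Turning coarse equivalences into quasi-isometries}, while the near-action slack $C$ is untouched.

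With these in hand the two implications are immediate. For the forward direction, suppose $\psi$ is a proper, cobounded quasi-action of $G$ on the quasi-geodesic (hence coarsely connected) space $X$. Then $\psi$ is a proper, cobounded coarse near-action, so Proposition~\ref{Prop: Coarse Svarc-Milnor with finitely generated conclusion} gives that $G$ is finitely generated and that $\gamma\mapsto\psi(\gamma)(x_0)$ is a coarse equivalence $G\to X$ for any basepoint $x_0$. Since both $G$ (with its word metric) and $X$ are quasi-geodesic, Remark~\ref{Remark: Turning coarse equivalences into quasi-isometries} upgrades this coarse equivalence to a quasi-isometry, so $G$ is quasi-isometric to $X$. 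For the converse, if $G$ is finitely generated and quasi-isometric to $X$, then in particular $X$ is coarsely equivalent to $G$, so Theorem~\ref{Theorem: Generalized Svarc-Milnor with converse} furnishes a proper, cobounded coarse near-action of $G$ on $X$; by observation (b), applied because $X$ is quasi-geodesic, this is a proper, cobounded quasi-action.

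I do not expect a genuine obstacle here: the corollary is a corollary. The only point needing a little care is the bookkeeping in observation (b) --- confirming that a single pair of quasi-isometry constants serves all $\psi(\gamma)$ at once --- but this is exactly what Remark~\ref{Remark: Turning coarse equivalences into quasi-isometries} delivers, since the coarse-embedding data $(\rho_-,\rho_+)$ of a coarse near-action is by definition uniform in $\gamma$ and the quasi-geodesic constants of $X$ are fixed once and for all.
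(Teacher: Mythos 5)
Your proposal is correct and follows essentially the route the paper itself indicates: the paper's proof is simply "This is \cite[Th.8.4]{Nek97}. Alternatively, use Theorem \ref{Theorem: Generalized Svarc-Milnor with converse} and Remark \ref{Remark: Turning coarse equivalences into quasi-isometries}," and your argument is a careful fleshing-out of that second alternative (quasi-geodesic $\Rightarrow$ coarsely connected, quasi-actions as coarse near-actions with affine controls, and the upgrade back to affine controls via the remark). No gaps.
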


\begin{proof}
This is \cite[Th.8.4]{Nek97}. Alternatively, use Theorem
\ref{Theorem: Generalized Svarc-Milnor with converse} and Remark
\ref{Remark: Turning coarse equivalences into quasi-isometries}.
\end{proof}

\section{Model geometries and model $\mathcal{Z}$%
-geometries\label{Section: Model geometries and model Z-geometries}}

Throughout this paper, all spaces are assumed separable and metrizable. A
metric space $\left(  X,d\right)  $ is \emph{proper} if every closed metric
ball $B_{d}\left[  x,r\right]  \subseteq X$ is compact. It is \emph{cocompact}
if there exist $x_{0}\in X$ and $R>0$ so that $\left\{  \left.  B_{d}\left[
\gamma x_{0},R\right]  \,\right\vert \,\gamma\in\operatorname{Isom}\left(
X\right)  \right\}  $ covers $X$. Here $\operatorname{Isom}\left(  X\right)  $
denotes the group of self-isometries of $X$. For later use, we review a few
well-known properties that follow from properness and/or cocompactness.

A metric space $\left(  X,d\right)  $ is \emph{uniformly contractible }if for
each $R>0$, there exists $S>R$ so that every open metric ball $B_{d}(x,R)$
contracts in $B_{d}(x,S)$.

\begin{lemma}
\label{Lemma: uniform contractibility}If a proper metric space $\left(
X,d\right)  $ is cocompact and contractible, then it is uniformly contractible.
\end{lemma}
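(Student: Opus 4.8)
The plan is to exploit a simple dichotomy: a contraction of $X$ may in principle move points arbitrarily far, but its restriction to a \emph{compact} set has bounded image. Cocompactness then allows this one bound, obtained near a basepoint, to be transported to a neighborhood of every point of $X$ by an isometry.

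First I would set up the data. Fix $R>0$. Let $x_0\in X$ and $R_0>0$ be as in the definition of cocompactness, so that $\{\,B_d[\gamma x_0,R_0]\mid\gamma\in\operatorname{Isom}(X)\,\}$ covers $X$. Using contractibility, choose a homotopy $H\colon X\times[0,1]\to X$ with $H_0=\operatorname{id}_X$ and $H_1$ equal to a constant map. Put $K=B_d[x_0,R+R_0]$; this is compact since $X$ is proper, so $H(K\times[0,1])$ is a compact, hence bounded, subset of $X$, and I can fix $D>0$ with $H(K\times[0,1])\subseteq B_d[x_0,D]$.

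Next, for an arbitrary $x\in X$ I would produce a contraction of $B_d(x,R)$ with controlled image by conjugating $H$. Pick $\gamma\in\operatorname{Isom}(X)$ with $d(\gamma x_0,x)\le R_0$, equivalently $d(x_0,\gamma^{-1}x)\le R_0$, and define $H^x\colon B_d(x,R)\times[0,1]\to X$ by $H^x(y,t)=\gamma\bigl(H(\gamma^{-1}y,t)\bigr)$. A direct check shows $H^x_0$ is the inclusion and $H^x_1$ is constant. For the image: if $y\in B_d(x,R)$ then $\gamma^{-1}y\in B_d(\gamma^{-1}x,R)\subseteq B_d(x_0,R+R_0)\subseteq K$, so $H(\gamma^{-1}y,t)\in B_d[x_0,D]$ and hence $H^x(y,t)\in B_d[\gamma x_0,D]\subseteq B_d[x,D+R_0]$. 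Therefore $B_d(x,R)$ contracts inside $B_d(x,S)$ with $S:=R+R_0+D+1$, which exceeds $R$ and depends only on $R$ (via the fixed constants $R_0$ and $D$); this is exactly uniform contractibility.

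The argument is essentially routine once the right object --- the global contraction restricted to the ``fundamental region'' $B_d[x_0,R+R_0]$ --- is identified. The only place demanding a little care is the radius bookkeeping in the last step: verifying, via the triangle inequality, that conjugating $H$ by $\gamma$ and translating metric balls around keeps everything inside the single ball $B_d(x,S)$. I do not expect a genuine obstacle.
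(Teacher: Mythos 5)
Your argument is correct, and it is the standard proof of this fact: restrict the global contraction to the compact set $B_d[x_0,R+R_0]$, bound its image by properness, and transport by isometries using cocompactness. The paper itself only cites \cite[Lemma 4.8]{GuMo19} for this lemma, and your proof is essentially the argument given there.
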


\begin{proof}
See, for example, \cite[Lemma 4.8]{GuMo19}.
\end{proof}

An open cover $\mathcal{U}$ of a metric space $\left(  X,d\right)  $ is
\emph{uniformly bounded} if $\left\{  \operatorname*{diam}\left(  U\right)
\mid U\in\mathcal{U}\right\}  $ is bounded above; in that case the supremum of
this set is the \emph{mesh} of $\mathcal{U}$. The \emph{order} of
$\mathcal{U}$ is the largest $k\in\mathbb{N\cup
}\left\{  \infty\right\}  $ such that some $x\in X$ is contained in $k$
elements of $\mathcal{U}$. We say that $\left(  X,d\right)  $ has
\emph{macroscopic dimension }$\leq n$ if there exists a uniformly bounded open cover of
$X$ having order $\leq n+1$. A cover $\mathcal{V}$ \emph{refines}
$\mathcal{U}$ if, for every $V\in\mathcal{V}$ there exists some $U\in
\mathcal{U}$ such that $V\subseteq U$; in that case we write $\mathcal{U\succ
V}$.

\begin{lemma}
\label{Lemma: finite macroscopic dimension and refinements}Let $\left(
X,d\right)  $ be proper and cocompact. Then $X$ has finite macroscopic
dimension; in fact, $X$ admits a sequence of finite order, uniformly bounded,
open covers $\mathcal{U}_{0}\succ\mathcal{U}_{1}\succ\mathcal{U}_{2}%
\succ\cdots$ such that $\operatorname{mesh}\left(  \mathcal{U}_{i}\right)
\rightarrow0$.
\end{lemma}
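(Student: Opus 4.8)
The plan is to produce the covers $\mathcal{U}_i$ from maximal $\varepsilon$-separated nets in $X$, funnelling all of the use of properness and cocompactness into a single quantitative input, which I would state as a lemma: \emph{for every $\varepsilon>0$ and $r>0$ there is an integer $N=N(\varepsilon,r)$ such that every ball $B_d(x,r)\subseteq X$ contains at most $N$ points that are pairwise at distance $\geq\varepsilon$.} This ``coarse bounded geometry at a fixed scale'' statement is the real content of the lemma, and I expect it to be the main obstacle, since it is essentially the only place cocompactness is used; everything after it is standard net-and-cover bookkeeping. (One could alternatively cite such a fact from the coarse-geometry literature, but it is short to prove directly.)

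To prove the claimed lemma, fix $x_0\in X$ and $R>0$ with $\{B_d[\gamma x_0,R]:\gamma\in\operatorname{Isom}(X)\}$ covering $X$ (cocompactness), and set $K'=B_d[x_0,R+r]$, which is compact by properness. Given any ball $B_d(x,r)$, choose $\gamma\in\operatorname{Isom}(X)$ with $d(x,\gamma x_0)\leq R$; then $\gamma^{-1}$ is an isometry carrying $B_d(x,r)$ into $B_d(x_0,R+r)=K'$ and carrying any $\varepsilon$-separated subset of $B_d(x,r)$ to an $\varepsilon$-separated subset of $K'$. So it suffices to bound the cardinality of $\varepsilon$-separated subsets of the single compact set $K'$; since $K'$ is totally bounded it is covered by finitely many, say $m$, balls of radius $\varepsilon/3$, and each such ball contains at most one point of an $\varepsilon$-separated set, so $N(\varepsilon,r)\leq m$ works.

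Now put $\varepsilon_i=4^{-i}$, let $\mathcal{S}_i\subseteq X$ be a maximal $\varepsilon_i$-separated set (it exists by Zorn's lemma, since a union of a chain of $\varepsilon_i$-separated sets is $\varepsilon_i$-separated), and set $\mathcal{U}_i=\{B_d(s,2\varepsilon_i):s\in\mathcal{S}_i\}$. By maximality, every $x\in X$ is within distance $<\varepsilon_i$ of some $s\in\mathcal{S}_i$, so $\mathcal{U}_i$ is an open cover; its mesh is $\leq 4\varepsilon_i\to 0$, so the $\mathcal{U}_i$ are uniformly bounded; and the order of $\mathcal{U}_i$ is at most $N(\varepsilon_i,2\varepsilon_i)<\infty$, because the centers $s$ of the members of $\mathcal{U}_i$ containing a fixed point $x$ form an $\varepsilon_i$-separated subset of $B_d(x,2\varepsilon_i)$. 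Finally $\mathcal{U}_{i+1}\succ$... i.e. $\mathcal{U}_i\succ\mathcal{U}_{i+1}$: if $A\subseteq X$ has $\operatorname{diam}(A)\leq\varepsilon_i$, pick $a\in A$ and $s\in\mathcal{S}_i$ with $d(a,s)<\varepsilon_i$; then every $a'\in A$ satisfies $d(a',s)\leq d(a',a)+d(a,s)<\varepsilon_i+\varepsilon_i=2\varepsilon_i$, so $A\subseteq B_d(s,2\varepsilon_i)\in\mathcal{U}_i$, while every member of $\mathcal{U}_{i+1}$ has diameter $\leq 4\varepsilon_{i+1}=\varepsilon_i$. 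Thus $\mathcal{U}_0\succ\mathcal{U}_1\succ\mathcal{U}_2\succ\cdots$ as required. Since a single $\mathcal{U}_i$ is already a uniformly bounded open cover of finite order, $X$ has finite macroscopic dimension as well.
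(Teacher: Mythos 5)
Your proof is correct, but it takes a genuinely different route from the paper's. The paper starts by citing an external result (\cite[Lemma 3.1]{Mor16a}) to produce a single finite-order cover $\mathcal{U}_0=\{B_d(\lambda_i(x_0),r)\}$ by isometric translates of one ball, and then manufactures the refining sequence by covering the compact ball $B_d[x_0,r]$ with finitely many small balls, pushing that finite cover around by the same isometries $\lambda_i$, and choosing Lebesgue numbers inductively to force $\mathcal{U}_i\succ\mathcal{U}_{i+1}$. You instead isolate a quantitative ``bounded geometry at every fixed scale'' statement --- every $r$-ball contains at most $N(\varepsilon,r)$ pairwise $\varepsilon$-separated points --- prove it directly by translating an arbitrary ball into the fixed compact set $B_d[x_0,R+r]$ and using total boundedness, and then build the covers from maximal separated nets at the geometric scales $4^{-i}$, with the refinement property falling out of the scale choice (mesh of $\mathcal{U}_{i+1}$ is at most $\varepsilon_i$, and every set of diameter at most $\varepsilon_i$ sits inside some $B_d(s,2\varepsilon_i)$ by maximality of the net). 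All the details check: existence of maximal nets by Zorn, coverage and mesh bounds, the order bound $N(\varepsilon_i,2\varepsilon_i)$ from the fact that the centers of the balls through a point $x$ form an $\varepsilon_i$-separated subset of $B_d(x,2\varepsilon_i)$, and the refinement chain in the paper's convention that $\mathcal{U}\succ\mathcal{V}$ means $\mathcal{V}$ refines $\mathcal{U}$. Your version is self-contained (no appeal to \cite{Mor16a}), gives explicit order bounds, and is the standard net argument from coarse geometry; the paper's version has the mild advantage that all the covers are visibly subordinate to one initial cover by translates of a single ball, which is the form in which the result is used elsewhere in that literature. Either argument suffices for everything the lemma is used for in this paper.
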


\begin{proof}
The argument provided in \cite[Lemma 3.1]{Mor16a} proves the existence of some
$x_{0}\in X$, $r>0$, and $\left\{  \lambda_{i}\right\}  _{i\in\mathbb{N}%
}\subseteq$ $\operatorname{Isom}\left(  X\right)  $ such that $\mathcal{U}%
_{0}=\left\{  B_{d}\left(  \lambda_{i}(x_{0}),r\right)  \right\}  $ is a
finite order cover of $X$. The following method produces a refinement
$\mathcal{V}_{\varepsilon}$ of $\mathcal{U}_{0}$ of mesh $\leq\varepsilon$ for
arbitrary $\varepsilon>0$: Use properness to choose a finite cover
$\mathcal{W}_{\varepsilon}^{\prime}$ of $B_{d}[x_{0},r]$ by open $\varepsilon
$-balls, and let $\mathcal{W}_{\varepsilon}$ be the collection of
intersections of the elements of $\mathcal{W}_{\varepsilon}^{\prime}$ with
$B_{d}\left(  x_{0},r\right)  $. Then let
\[
\mathcal{V}_{\varepsilon}=\left\{  \lambda_{i}\left(  W\right)  \mid
W\in\mathcal{W}_{\varepsilon}\text{ and }i\in\mathbb{N}\right\}
\]
Since $\mathcal{W}_{\varepsilon}$ is finite and $\mathcal{U}_{0}$ has finite
order, $\mathcal{V}_{\varepsilon}$ has finite order.

To produce the sequence of open covers promised in the lemma, we apply the
above procedure inductively. Let $\varepsilon_{1}$ be arbitrary and
$\mathcal{U}_{1}=\mathcal{V}_{\varepsilon_{1}}$. Next let $\varepsilon_{2}>0$
be a Lebesgue number for the cover $\mathcal{W}_{\varepsilon_{1}}^{\prime}$ to
obtain $\mathcal{U}_{2}=\mathcal{V}_{\varepsilon_{2}}$ which refines
$\mathcal{U}_{1}$ and has mesh $\leq\varepsilon_{2}$. Continue inductively,
making sure that the chosen Lebesgue numbers converge to $0$.
\end{proof}

\begin{lemma}
\label{Lemma: bounded homotopy}Suppose $(X,d)$ is uniformly contractible;
$\mathcal{U}$ is a uniformly bounded, finite order, open cover of $X$; $K$ is
the nerve of $\mathcal{U}$; and $\psi:X\rightarrow K$ is a corresponding
barycentric map. Then there is a map $s:K\rightarrow X$ and a bounded homotopy
$H:X\times I\rightarrow X$ joining $s\circ\psi$ with the identity
$\operatorname*{id}_{X}$.
\end{lemma}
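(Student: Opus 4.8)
The statement is essentially the standard fact that a uniformly contractible space is shape-equivalent (even homotopy-equivalent in the bounded/coarse sense) to the nerve of any sufficiently fine uniformly bounded cover; here we only need one direction, namely a section $s\colon K\to X$ of the barycentric map $\psi$ up to bounded homotopy. I would build $s$ skeleton-by-skeleton over $K$, and then construct the homotopy $H$ from $s\circ\psi$ to $\operatorname{id}_X$ by a Lebesgue-number/partition argument, using uniform contractibility at each stage to keep everything bounded.

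\textbf{Construction of $s$.} Recall $K$ is the nerve of $\mathcal{U}=\{U_\alpha\}$: vertices are the $U_\alpha$, and a finite set spans a simplex iff the corresponding $U_\alpha$ have a common point. Define $s$ on the $0$-skeleton by picking, for each vertex $U_\alpha$, a point $s(U_\alpha)\in U_\alpha$ (when $U_\alpha\ne\emptyset$). For an edge $[U_\alpha,U_\beta]$ of $K$ there is a point $p\in U_\alpha\cap U_\beta$; since $U_\alpha,U_\beta$ have diameter $\le\operatorname{mesh}(\mathcal U)=:D$, the images $s(U_\alpha),s(U_\beta)$ both lie within $D$ of $p$, hence within $2D$ of each other. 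Uniform contractibility gives $S=S(2D)$ so that any metric ball of radius $2D$ contracts inside the concentric ball of radius $S$; in particular $s(U_\alpha)$ and $s(U_\beta)$ can be joined by a path lying in $B_d(s(U_\alpha),S)$, which defines $s$ on this edge with image of diameter $\le 2S$. Proceeding inductively: having defined $s$ on the $k$-skeleton so that the image of each $k$-simplex has diameter $\le R_k$ for a uniform constant $R_k$, a $(k{+}1)$-simplex $\sigma$ has $s(\partial\sigma)$ of diameter $\le (k{+}2)R_k$, so it lies in a ball of that radius about any of its points; uniform contractibility (applied to $(k{+}2)R_k$) lets us cone it off inside a ball of uniform radius $R_{k{+}1}$, extending $s$ over $\sigma$. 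Since $K$ has dimension $\le n$ (the order of $\mathcal U$ is $\le n{+}1$ by hypothesis), this terminates after finitely many steps and produces a continuous $s\colon K\to X$ with a uniform bound on the diameter of the image of each simplex. Here the finite-order hypothesis is essential: it bounds $\dim K$ and makes the barycentric map $\psi$ locally finite and continuous.

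\textbf{Construction of the bounded homotopy.} Now consider $s\circ\psi\colon X\to X$. For $x\in X$, $\psi(x)$ lies in the (closed) simplex $\sigma_x$ spanned by $\{U_\alpha : x\in U_\alpha\}$; all these $U_\alpha$ contain $x$, so $s(\sigma_x)$ lies within $R_{n}$ (say) of $x$, whence $d(x,s\psi(x))\le R_{n}+D$ — i.e. $s\circ\psi$ is boundedly close to $\operatorname{id}_X$. To upgrade "boundedly close" to "boundedly homotopic," set $E=R_n+D$ and let $S'=S(E)$ be the uniform-contractibility constant for $E$: every ball of radius $E$ contracts in the ball of radius $S'$ about its center. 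I want a homotopy $H\colon X\times I\to X$, $H_0=s\psi$, $H_1=\operatorname{id}$, with $d(H(x,t),x)\le S'$ for all $x,t$. The cleanest way is to pull back a contraction of the "mapping-cylinder-like" region: more concretely, for each $x$ the two points $s\psi(x)$ and $x$ lie in $B_d(x,E)$, which contracts in $B_d(x,S')$; but to get a \emph{continuous} choice of path $t\mapsto H(x,t)$ one cannot contract each ball independently. Instead, use a Mayer–Vietoris/partition-of-unity style argument: cover $X$ by the uniformly bounded open sets $U_\alpha$, take a subordinate partition of unity $\{\varphi_\alpha\}$ (this is exactly the data defining $\psi$), and build $H$ inductively over the "strata" $\{x : x\text{ lies in }\le k\text{ of the }U_\alpha\}$, at each stage using uniform contractibility to fill in homotopies on the newly added cells while controlling displacement by a constant depending only on $k$, $D$, and the contractibility function of $X$; take $S''$ to be the largest such constant over $k\le n$. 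This is the standard "bounded homotopy from uniform contractibility" argument (cf. the proof of Lemma~\ref{Lemma: uniform contractibility} and the references therein), and I would cite \cite[Lemma 4.8]{GuMo19}-type reasoning or spell it out.

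\textbf{Main obstacle.} The only genuinely delicate point is \emph{continuity together with boundedness} of $H$: contracting each ball $B_d(x,E)$ separately is trivial but not continuous in $x$, so one must organize the homotopy along the nerve structure (inductively over skeleta of $K$ or over the strata of the partition of unity), invoking uniform contractibility finitely many times with a uniform bound at each level. The finite-order hypothesis on $\mathcal U$ is what makes this induction stop and keeps all the constants uniform. The construction of $s$ itself is routine obstruction-theory-style coning, again finite because $\dim K\le n$.
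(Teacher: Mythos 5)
Your proposal is correct and follows essentially the same route as the paper: $s$ is built inductively over the skeleta of the (finite-dimensional) nerve using uniform contractibility, and $s\circ\psi$ is checked to be boundedly close to $\operatorname*{id}_{X}$, after which the bounded homotopy comes from the standard nerve-indexed induction --- precisely the step the paper outsources to \cite[Cor.5.3]{GuMo19}. The one delicate point you flag (getting the homotopy continuous as well as bounded) is exactly the content of that cited corollary, so nothing essential is missing.
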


\begin{proof}
Since $K$ is finite-dimensional and $X$ is uniformly contractible, it is
straightforward to build a map $s:K\rightarrow X$ inductively over the skeleta
of $K$ such that $s\circ\psi$ is bounded distance from $\operatorname*{id}%
_{X}$. (This is an easier version of \cite[Prop.5.2]{GuMo19}.) From there one
can apply \cite[Cor.5.3]{GuMo19} to obtain the desired homotopy.
\end{proof}

A locally compact space $X$ is an \emph{absolute neighborhood retract}
(\emph{ANR} for short) if, whenever $X$ is embedded as a closed subset of
another space $Y$, some neighborhood of $X$ retracts onto $X$. A contractible
ANR is called an \emph{absolute retract} or simply an \emph{AR}. The category
of ANRs provides a common ground of \textquotedblleft nice\textquotedblright%
\ spaces which includes manifolds, locally finite complexes, and proper CAT(0)
spaces---the spaces most commonly encountered in geometric group theory. For a
quick introduction to ANRs, see \cite[\S 2]{GuMo19}. It is worth noting that
some authors do not require ANRs to be locally compact (or separable and
metrizable). For our purposes, we consider those conditions to be part of the
definition. We use the term \emph{metric AR }(or \emph{metric ANR}) when a
specific metric plays a role.

\begin{definition}
A \emph{model geometry} is a proper, cocompact, metric AR $\left(  X,d\right)
$.
\end{definition}

Given a model geometry, we often seek a nice compactification. A closed subset
$A$ of a space $Y$, is a \emph{$\mathcal{Z}$-set} if there exists a homotopy
$H:Y\times\lbrack0,1]\rightarrow Y$ such that $H_{0}=\operatorname*{id}_{Y}$
and $H_{t}(Y)\subset Y-A$ for every $t>0$. In this case we say $H$
\emph{instantly homotopes }$Y$ \emph{off from} $A$. A \emph{$\mathcal{Z}%
$-com\-pact\-i\-fi\-ca\-tion} of a space $X$ is a com\-pact\-i\-fi\-ca\-tion
$\overline{X}=X\sqcup Z$ such that $Z$ is a $\mathcal{Z}$-set in $\overline
{X}$. If $X$ is separable and metrizable then so is $\overline{X}$; and if $X$
is an AR then so is $\overline{X}$. For these and other facts about
\emph{$\mathcal{Z}$}-com\-pact\-i\-fi\-ca\-tions, see \cite[\S 3]{GuMo19}.

A \emph{controlled }$\mathcal{Z}$\emph{-com\-pact\-i\-fi\-ca\-tion }of a
proper metric space $\left(  X,d\right)  $ is a $\mathcal{Z}$%
-com\-pact\-i\-fi\-ca\-tion $\overline{X}$ satisfying the additional condition:%

\begin{gather*}
\text{(\ddag) For\ every}\ R>0\ \text{and\ open\ cover\ }\mathcal{U}%
\ \text{of}\ \overline{X}\text{,\ there\ is\ a\ compact\ set\ }C\subset X\ \\
\text{so\ that\ if\ }A\subseteq X-C\ \text{and\ }\operatorname*{diam}%
\nolimits_{d}A<R\text{,\ then}\ A\subseteq U\ \text{for\ some\ }%
U\in\mathcal{U}.
\end{gather*}

\noindent If we choose a metric $\overline{d}$ for $\overline{X}$, condition
$\emph{(\ddag)}$ is equivalent to:\medskip%
\begin{align*}
\text{(\ddag}^{\prime}\text{) For\ every\ }R  &  >0\text{\ and\ }%
\epsilon>0\text{, there\ is\ a\ compact\ set\ }C\subset X\ \text{so\ that\ }\\
\text{if\ }A  &  \subseteq X-C\text{\ and\ }\operatorname*{diam}%
\nolimits_{d}A<R\text{,\ then\ }\operatorname*{diam}\nolimits_{\overline{d}%
}A<\epsilon\text{.}%
\end{align*}

\begin{definition}
A \emph{model $\mathcal{Z}$-geometry} $\left(  \overline{X},Z,d\right)  $ is a
controlled \emph{$\mathcal{Z}$}-com\-pact\-i\-fi\-ca\-tion $\overline
{X}=X\sqcup Z$ of a model geometry $\left(  X,d\right)  $. In this case we
refer to $\left(  X,d\right)  $ as the \emph{underlying geometry} and the
space $Z$ as the \emph{$\mathcal{Z}$-boundary} of $\left(  \overline
{X},Z,d\right)  $.
\end{definition}

It is important to note that the space $\overline{X}=X\sqcup Z$ does not, by
itself, determine the model \emph{$\mathcal{Z}$}-geometry; the metric $d$ is a
crucial ingredient.\footnote{By contrast, the choice of a metric $\overline
{d}$ realizing the topology on $\overline{X}$, while convenient, is of little
additional significance; any such metric will do.} It is also worth noting
that a given model geometry $\left(  X,d\right)  $ can admit any number of
distinct model \emph{$\mathcal{Z}$}-geometries (possibly none at all). It is
useful to think about the following simple examples.

\begin{example}
\label{Example: planar geometries}The Euclidean plane $\left(
%TCIMACRO{\U{211d} }%
%BeginExpansion
\mathbb{R}
%EndExpansion
^{2},d_{E}\right)  $ and hyperbolic plane $\left(  \mathbb{H}^{2}%
,d_{H}\right)  $ are model geometries. Adding the standard visual circles at
infinity gives model \emph{$\mathcal{Z}$}-geometries $\left(  \overline{%
%TCIMACRO{\U{211d} }%
%BeginExpansion
\mathbb{R}
%EndExpansion
^{2}},S^{1},d_{E}\right)  $ and $\left(  \overline{\mathbb{H}^{2}},S^{1}%
,d_{H}\right)  $. In each case, all isometries of the original space extend to
homeomorphisms of the compactification. By contrast, if we quotient out the
upper half-circle in either of these boundaries, we get a new model
\emph{$\mathcal{Z}$}-geometry for which the boundary is still a circle, but
now many isometries do not extend. We will return to the issue of
extendability in Section \ref{Section EZ-structures and cEZ-structures}.
\end{example}

\begin{example}
\label{Example: n-dimensional geometries}As above, we can obtain model
\emph{$\mathcal{Z}$}-geometries $\left(  \overline{%
%TCIMACRO{\U{211d} }%
%BeginExpansion
\mathbb{R}
%EndExpansion
^{n}},S^{n-1},d_{E}\right)  $ and \newline$\left(  \overline{\mathbb{H}^{n}%
},S^{n-1},d_{H}\right)  $ by adding the visual $\left(  n-1\right)  $-sphere
at infinity to Euclidean and hyperbolic $n$-space. If $A\subseteq S^{n-1}$ is
a non-cellular cell-like set (such as a Fox-Artin arc or the Whitehead
continuum in $S^{3}$), then quotienting out by $A$ produces model
\emph{$\mathcal{Z}$}-geometries with boundaries not homeomorphic to $S^{n-1}$.
\end{example}

We require cocompactness in our model geometries but we do not assume they
admit a geometric group action---or even proper, cobounded coarse near action.
Heintze \cite{Hei74} has observed the existence of homogeneous negatively
curved Riemannian manifolds which, by virtue of not being symmetric spaces, do
not admit quotients of finite volume. Cornulier \cite{Cor18} shows that these
spaces are not even quasi-isometric to a finitely generated group, and hence
admit no proper, cobounded coarse near action. More recently, Healy and
Pengitore \cite{HePe20} have constructed higher rank CAT(0) spaces with similar
properties. As such, there are model geometries and $\mathcal{Z}$-geometries
not relevant to the group theoretic applications that are the focus of this
paper. Nevertheless, several theorems in the coming sections \emph{can} be
applied to those spaces.\medskip

Model geometries are not required to be finite-dimensional but the argument
presented in \cite{Mor16a}, or more explicitly \cite[Th.2]{GuMo16}, gives the following.

\begin{theorem}
\label{Theorem: Finite-dimensionality of Z-boundaries}The \emph{$\mathcal{Z}%
$-boundary} of every \emph{model $\mathcal{Z}$-geometry} $\left(  \overline
{X},Z,d\right)  $ has finite Lebesgue covering dimension. More specifically,
if the macroscopic dimension of $X$ is $n$, then $\dim Z\leq n-1$.
\end{theorem}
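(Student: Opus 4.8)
The plan is to reduce the statement to a purely combinatorial fact about the nerve of a uniformly bounded cover of $X$. Since $Z$ is compact and metrizable, a Lebesgue-number argument shows it is enough to produce, for every $\epsilon>0$, a finite open cover of $Z$ (with respect to some fixed metric $\overline{d}$ on $\overline{X}$) of mesh $<\epsilon$ and order $\leq n$; this forces $\dim Z\leq n-1$ and, in particular, finite-dimensionality. Here $n$ is finite by \lemref{Lemma: finite macroscopic dimension and refinements}, since a model geometry is proper and cocompact.

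Fix $\epsilon>0$. Since $X$ has macroscopic dimension $n$, start from a uniformly bounded, locally finite open cover $\mathcal{U}$ of $X$ of order $\leq n+1$; by properness its nerve $K$ is a locally finite simplicial complex with $\dim K\leq n$. A model geometry is contractible, hence uniformly contractible by \lemref{Lemma: uniform contractibility}, so \lemref{Lemma: bounded homotopy} supplies a map $s\colon K\to X$ with $s\circ\psi$ joined to $\operatorname{id}_{X}$ by a homotopy of $d$-bounded tracks, where $\psi\colon X\to K$ is a barycentric map; choosing $s$ to send each vertex into the corresponding member of $\mathcal{U}$ makes $s$ proper, and (a standard property of nerve maps) $\psi\circ s$ is boundedly close to $\operatorname{id}_{K}$. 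The proper map $s$ induces, by pullback, a compactification $\overline{K}=K\sqcup Z'$ of $K$ that is controlled with respect to the combinatorial metric $d_{K}$ and in which $Z'$ is a $\mathcal{Z}$-set. Because the homotopy tracks of $s\psi\simeq\operatorname{id}_{X}$ are $d$-bounded, the control condition \textup{(\ddag)} forces $\overline{d}(s\psi(x),x)\to0$ as $x\to Z$; hence $\psi$ extends to $\overline{\psi}\colon\overline{X}\to\overline{K}$ with $\overline{\psi}(Z)\subseteq Z'$, while $s$ extends to $\overline{s}\colon\overline{K}\to\overline{X}$. Since $\overline{s}\circ\overline{\psi}$ restricts to $\operatorname{id}_{Z}$ and $\overline{\psi}\circ\overline{s}$ restricts to $\operatorname{id}_{Z'}$, the map $\overline{\psi}$ restricts to a homeomorphism $Z\xrightarrow{\ \cong\ }Z'$. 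Thus it suffices to bound $\dim Z'$.

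The remaining step — and the main obstacle — is the combinatorial assertion that a controlled $\mathcal{Z}$-boundary $Z'$ of a locally finite complex $K$ with $\dim K\leq n$ satisfies $\dim Z'\leq n-1$; this is essentially the argument of \cite{Mor16a} (see also \cite[Th.2]{GuMo16}). The idea is to fix a base vertex $v_{0}$ and, for a generic radius $r$, use the metric sphere $S_{r}=\{x\in K\mid d_{K}(x,v_{0})=r\}$, which is a polyhedron of dimension $\leq n-1$ onto which the exterior $\{x\mid d_{K}(x,v_{0})\geq r\}$ radially retracts. An order-$\leq n$ open cover of $S_{r}$ of small $d_{K}$-mesh (which exists since $\dim S_{r}\leq n-1$) pulls back along this retraction to an order-$\leq n$ open cover of the exterior, whose closures in $\overline{K}$ cover a neighborhood of $Z'$; the traces on $Z'$ then form an open cover of order $\leq n$. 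The delicate point is the mesh estimate in $\overline{d}$: one must choose the radii — passing to a sequence $r_{1}<r_{2}<\cdots$ and realizing $Z'$ as the inverse limit of nerves of dimension $\leq n-1$ — so that the unbounded radial fibers of the retractions degenerate in $\overline{d}$, which is exactly where the control condition and the $\mathcal{Z}$-set homotopy on $\overline{K}$ are used in tandem. Granting this, $\dim Z\leq\dim Z'\leq n-1$.
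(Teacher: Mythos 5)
The paper gives no self-contained argument here: the sentence immediately preceding the theorem \emph{is} the proof, namely a direct appeal to the argument of \cite{Mor16a} and, more explicitly, to \cite[Th.2]{GuMo16}. That theorem applies verbatim to a proper metric space of macroscopic dimension $n$ equipped with a controlled $\mathcal{Z}$-compactification, which is exactly what a model $\mathcal{Z}$-geometry is once \lemref{Lemma: finite macroscopic dimension and refinements} is in hand. Your opening reduction (arbitrarily fine open covers of $Z$ of order $\leq n$ suffice) is correct, but what follows both does more than necessary and less than required: the detour through the nerve $K$ is superfluous, since \cite[Th.2]{GuMo16} needs no polyhedral structure and can be applied directly to $\left(X,d\right)$, while the step that actually carries the content of the theorem is not established.

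The genuine gap is in your final paragraph, which you yourself flag as ``the main obstacle.'' The mechanism you propose --- that for generic $r$ the metric sphere $S_{r}\subseteq K$ is a polyhedron of dimension $\leq n-1$ onto which the exterior $\left\{x \mid d_{K}(x,v_{0})\geq r\right\}$ ``radially retracts,'' so that an order-$\leq n$ cover of $S_{r}$ pulls back to one of the exterior --- does not exist in this generality. A contractible locally finite complex carries no radial structure: there is in general no retraction (let alone one with controlled point-preimages) of the exterior of a metric ball onto the corresponding metric sphere, and metric spheres need not be subpolyhedra. This is precisely the feature that distinguishes the general case from the CAT(0) and hyperbolic cases treated by Swenson and Bestvina--Mess, and it is why \cite{Mor16a} and \cite{GuMo16} argue differently, working directly with uniformly bounded finite-order covers and the control condition (\ddag) rather than with spheres and radial retractions. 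Since your order-$\leq n$ cover of $Z'$ is obtained by pulling back along this nonexistent retraction, the dimension drop from $n$ to $n-1$ --- the entire point of the theorem --- is not proved. (The earlier transfer of the compactification from $X$ to $K$ via $s$ and $\psi$ is essentially boundary swapping and could be made rigorous by citing the relevant results of \cite{GuMo19}, but it buys you nothing once the final step is handled correctly.)
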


We close this section with an observation that will be used in Section
\ref{Section: Applications of cZ-structures} and can be found in
\cite{GuMo19}. We repeat it here for easy access and to emphasize the
difference between the metrics $d$ and $\overline{d}$ mentioned above.

\begin{lemma}
Suppose $(\overline{X},Z,d)$ is a model $\mathcal{Z}$-geometry. For each $z\in
Z$, neighborhood $\overline{U}$ of $z$ in $\overline{X}$ and $r>0$, there is a
neighborhood $\overline{V}$ of $z$ in $\overline{X}$ such that $d(V,X-U)\geq
r$ where $V=\overline{V}-Z$ and $U=\overline{U}-Z$.
\end{lemma}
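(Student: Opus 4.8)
The plan is to exploit condition $(\ddag')$, which translates small-$d$-diameter sets far out in $X$ into small-$\overline{d}$-diameter sets, and run it in reverse: a neighborhood basis at $z$ in the $\overline{d}$-metric gives, via $(\ddag')$, control on $d$-distances outside a compact set. Fix $z\in Z$, a neighborhood $\overline{U}$ of $z$ in $\overline{X}$, and $r>0$. Shrinking $\overline{U}$ only makes the conclusion stronger, so we may assume $\overline{U}=\overline{B}_{\overline{d}}(z,\epsilon)$ for some $\epsilon>0$; it suffices to produce $\overline{V}$ with $d(V,X-U)\ge r$ for this basic $\overline{U}$.

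First I would apply $(\ddag')$ with this $R:=r$ and with $\epsilon$: there is a compact set $C\subset X$ so that any $A\subseteq X-C$ with $\operatorname{diam}_d A< r$ satisfies $\operatorname{diam}_{\overline{d}}A<\epsilon$. Since $C$ is compact and $z\notin C$, we have $\overline{d}(z,C)=:\delta>0$. Now set $\overline{V}:=\overline{B}_{\overline{d}}(z,\min\{\delta/2,\,\epsilon/2\})$, a neighborhood of $z$ in $\overline{X}$, and let $V=\overline{V}-Z$, $U=\overline{U}-Z=B_{\overline{d}}(z,\epsilon)-Z$. The claim is that $d(V,X-U)\ge r$. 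Suppose not: there are $p\in V$ and $q\in X-U$ with $d(p,q)< r$. I would first note $p\notin C$, since $\overline{d}(z,p)<\delta/2<\delta\le\overline{d}(z,C)$; and $q\notin C$ as well, since $\overline{d}(z,q)\ge\epsilon$ (as $q\notin U$) but also $q\in X-C$ requires checking — here is the one genuinely delicate point.

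The main obstacle is precisely ensuring both $p$ and $q$ lie in $X-C$ so that $(\ddag')$ applies to $A=\{p,q\}$; for $q$ this is not automatic from $\overline{d}(z,q)\ge\epsilon$ alone if $C$ is large. The clean fix is to choose $C$ at the outset to contain $\overline{B}_{\overline{d}}(z,\epsilon)\cap X$'s complement-issue the other way: instead, enlarge $C$ (still compact, $(\ddag')$ still holds for a larger compact set) to also contain the compact set $\{x\in X:\overline{d}(z,x)\ge\epsilon/2\}\cap \overline{B}_{\overline d}(z,\epsilon)$ — no, more simply, I would first fix $C_0$ from $(\ddag')$, then observe that the set $X\setminus B_{\overline d}(z,\epsilon/2)$ need not be compact, so instead I argue contrapositively at the level of diameters: if $d(p,q)<r$ with $p\notin C_0$, then $\operatorname{diam}_d\{p,q\}<r$ and $\{p,q\}\subseteq X-C_0$ would force $\operatorname{diam}_{\overline d}\{p,q\}<\epsilon$ — but this still needs $q\notin C_0$. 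So the correct route is: apply $(\ddag')$, get $C_0$; since $z\notin C_0$ pick $\overline W$ a neighborhood of $z$ with $\overline W\cap C_0=\emptyset$, i.e. $(\overline W-Z)\subseteq X-C_0$; then intersect, taking $\overline V\subseteq \overline W\cap \overline B_{\overline d}(z,\epsilon/2)$ small enough that additionally the relation "$d(p,q)<r,\ p\in \overline V-Z$" forces $q\in X-C_0$ — which holds once we also know $q\in X-U$ keeps $q$ out of a neighborhood of $z$, but $C_0$ may still reach near $\partial U$. I would resolve this cleanly by choosing, from the start, the compact set in $(\ddag')$ applied to $R=r$ and $\epsilon'=\epsilon/2$, call it $C_0$, and then simply noting: for $p\in \overline V-Z$ with $\overline V\subseteq \overline B_{\overline d}(z,\epsilon/4)\cap \overline W$ (with $\overline W\cap C_0=\emptyset$), if $d(p,q)<r$ then $q\in X-C_0$ automatically — because if instead $q\in C_0$, then since $\overline d$ restricted to the compact $C_0\cup\{z\}$ is uniformly continuous and... the honest statement is that one uses $(\ddag')$ a second time or uses the compactness of $C_0$ directly: $\overline d(p,C_0)\ge \overline d(z,C_0)-\overline d(z,p)>0$, and one can further shrink $\overline V$ so that $d(p,q)<r$ with $p\in \overline V-Z$ forces $q$ into a prescribed $\overline d$-neighborhood of $p$ (again by $(\ddag')$ with the pair $\{p,q\}$ once $q\notin C_0$ is secured) — in short, a short bootstrapping argument. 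I would present it as: apply $(\ddag')$ to get $C_0$; set $\overline V$ to be a neighborhood of $z$ inside $\overline B_{\overline d}(z,\epsilon/2)$ and disjoint from $C_0$; if $p\in V$, $q\in X-U$, $d(p,q)<r$, then first deduce $q\notin C_0$ (else $\{p,q\}$ meets $C_0$, but we can instead pre-enlarge $C_0$ to a compact set containing a closed $\overline d$-neighborhood of itself, which is still compact by properness of $\overline X$), hence $\{p,q\}\subseteq X-C_0$ with $\operatorname{diam}_d<r$, so $\operatorname{diam}_{\overline d}\{p,q\}<\epsilon/2$, giving $\overline d(z,q)\le \overline d(z,p)+\overline d(p,q)<\epsilon/2+\epsilon/2=\epsilon$, i.e. $q\in U$, a contradiction. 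The remaining routine point — replacing $C_0$ by a compact set containing one of its $\overline d$-neighborhoods — follows since $\overline X$ is compact, hence every closed subset is compact, so this is free. This completes the argument modulo these bookkeeping choices of constants.
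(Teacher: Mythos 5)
Your overall architecture is the right one (the paper itself gives no proof here, deferring to \cite{GuMo19}): reduce to $\overline{U}\supseteq B_{\overline{d}}(z,\epsilon)$, invoke $(\ddag')$ with $R=r$ and $\epsilon/2$ to get a compact $C_0\subset X$, choose $\overline{V}\subseteq B_{\overline{d}}(z,\epsilon/2)$ avoiding an enlargement of $C_0$, and derive a contradiction from $\operatorname{diam}_{\overline{d}}\{p,q\}<\epsilon/2$. You also correctly isolate the one delicate point: to apply $(\ddag')$ to $A=\{p,q\}$ you must guarantee $q\notin C_0$, not just $p\notin C_0$.

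However, your proposed repair of that point does not work. You enlarge $C_0$ to a set containing a closed $\overline{d}$-neighborhood of itself and justify compactness via the compactness of $\overline{X}$. This fails twice over. First, knowing that $p$ lies outside a $\overline{d}$-neighborhood of $C_0$ gives no control on $d(p,C_0)$, so $d(p,q)<r$ does not force $q\notin C_0$; the hypothesis you need to exclude is a $d$-condition, and $d$ and $\overline{d}$ admit no uniform comparison (a modulus of continuity on a compact set goes in the wrong direction for this). Second, a closed $\overline{d}$-neighborhood of $C_0$ may meet $Z$, or even contain $z$, in which case it is neither a compact subset of $X$ nor a set from which $z$ can be separated. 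The correct enlargement is the closed $d$-neighborhood $C_1=\{x\in X: d(x,C_0)\leq r\}$, which is compact because the \emph{model geometry} $(X,d)$ is a proper metric space (this is exactly the ``difference between the metrics $d$ and $\overline{d}$'' the paper is flagging). Then $C_1$ is closed in $\overline{X}$ and misses $z$, so one may take $\overline{V}=B_{\overline{d}}(z,\epsilon/2)\setminus C_1$; for $p\in V$ we get $d(p,C_0)>r$, hence any $q$ with $d(p,q)<r$ satisfies $q\notin C_0$, and the rest of your computation ($\overline{d}(z,q)<\epsilon$, so $q\in U$, contradiction) goes through. Separately, the write-up should be cleaned of its several abandoned false starts before it could stand as a proof.
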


\section{Defining $\mathcal{Z}$-structures and coarse $\mathcal{Z}%
$-structures\label{Section: Z-structures and coarse Z-structures}}

We can now formulate one of our main definitions---that of a \textquotedblleft
coarse $\mathcal{Z}$-structure\textquotedblright. The task is made simpler by
using the notion of a model $\mathcal{Z}$\emph{-}geometry. First we
reformulate the classical notion of a $\mathcal{Z}$-structure in this way (see \cite[Definition 1.1]{Bes96}, \cite[Definition 1]{Dra06}, or \cite[Definition 6.1]{GMT19} for versions of this classical definition).
Lemma 6.4 of \cite{GuMo19} assures that this formulation is equivalent to the original.

\begin{definition}
\label{Defn. Z-structure}A $\mathcal{Z}$\emph{-structure} on a group $G$
consists of a model $\mathcal{Z}$-geometry $\left(  \overline{X},Z,d\right)  $
and a geometric action of $G$ on $\left(  X,d\right)  $. In this case, we call
$Z$ a $\mathcal{Z}$\emph{-boundary }for $G$.
\end{definition}

\begin{remark}
Equivalently, a $\mathcal{Z}$\emph{-structure} on $G$ is a homomorphism
$\phi:G\rightarrow\operatorname{Isom}\left(  X\right)  $ such that $\ker\phi$
is finite and $\phi\left(  G\right)  $ is both cocompact and proper.
%If
%$\phi\left(  G\right)  \leq\mathcal{U}_{\overline{X}}$, we call this an
%$\mathcal{EZ}$\emph{-structure} and $Z$ an $\mathcal{EZ}$\emph{-boundary for
%}$G$.

\end{remark}

We are now ready to generalize.

\begin{definition}
\label{Defn. coarse-Z-structure}A \emph{coarse} $\mathcal{Z}$\emph{-structure}
on a group $G$ (\emph{c}$\mathcal{Z}$\emph{-structure}\ for short) consists of
a model $\mathcal{Z}$-geometry $\left(  \overline{X},Z,d\right)  $ and a
proper, cobounded, coarse near-action of $G$ on $X$. In this case we call $Z$
a coarse $\mathcal{Z}$\emph{-boundary} (or \emph{c}$\mathcal{Z}$%
\emph{-boundary}) for $G$.
\end{definition}

The \textquotedblleft if and only if\textquotedblright\ nature of Theorem
\ref{Theorem: Generalized Svarc-Milnor with converse} allows for a simple
equivalent definition.

\begin{definition}
[alternative formulation]%
\label{Defn: Alternate characterization of cZ-structure}A \emph{c}%
$\mathcal{Z}$\emph{-structure} on a finitely generated group $G$ consists of a
model $\mathcal{Z}$-geometry $\left(  \overline{X},Z,d\right)  $ and a coarse
equivalence $f:G\rightarrow\left(  X,d\right)  $. \medskip
\end{definition}

\begin{remark}
Proposition \ref{Prop: Coarse Svarc-Milnor with finitely generated conclusion}
implies that a group $G$ admitting a c$\mathcal{Z}$-structure is finitely
generated. In that context, we always give $G$ a standard word length metric.
\end{remark}

Since every $\mathcal{Z}$-structure is a $c\mathcal{Z}$-structure, we
immediately have many groups that admit $c\mathcal{Z}$-structures. As for new
examples, those are produced primarily by applications of Theorem
\ref{Theorem: primary goal}. We will look more closely at specific cases in
the next section.

For those who prefer working with quasi-isometries and quasi-actions, we
formulate a definition in that category. The situation is complicated slightly
by the extra hypothesis required in the \v{S}varc-Milnor Theorem for
quasi-actions (see Remark \ref{Remark: Turning coarse equivalences into quasi-isometries}).

\begin{definition}
\label{Defn. quasi-Z-structure}A \emph{quasi-}$\mathcal{Z}$\emph{-structure}
on a group $G$ (\emph{q}$\mathcal{Z}$\emph{-structure}\ for short) consists of
a model $\mathcal{Z}$-geometry $\left(  \overline{X},Z\right)  $ and a proper,
cobounded quasi-action of $G$ on $X$. In that case we call $Z$ a
\emph{quasi-}$\mathcal{Z}$\emph{-boundary} (or q$\mathcal{Z}$\emph{-}boundary)
for $G$.
\end{definition}

\begin{proposition}
\label{Prop: Alternate characterization of qZ-structure}A finitely generated
group $G$ admits a q$\mathcal{Z}$-structure based on a model $\mathcal{Z}%
$-geometry $\left(  \overline{X},Z,d\right)  $ if $G$ is quasi-isometric to
$\left(  X,d\right)  $. If $\left(  X,d\right)  $ is quasi-geodesic space the
converse is true.
\end{proposition}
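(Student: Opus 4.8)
The plan is to deduce both directions from the Generalized \v{S}varc--Milnor Theorem with converse (Theorem~\ref{Theorem: Generalized Svarc-Milnor with converse}), together with Remark~\ref{Remark: Turning coarse equivalences into quasi-isometries}, which bridges the gap between coarse near-actions and quasi-actions on quasi-geodesic spaces.

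First I would prove the forward implication: suppose $G$ is a finitely generated group quasi-isometric to the model geometry $(X,d)$ underlying the given model $\mathcal{Z}$-geometry $(\overline{X},Z,d)$. A quasi-isometry is in particular a coarse equivalence, so $G$ and $X$ are coarsely equivalent. Since $X$ is proper and cocompact, it is coarsely connected (indeed, proper cocompact metric spaces that are nonempty are coarsely connected---this follows from the covering argument in Lemma~\ref{Lemma: finite macroscopic dimension and refinements}, or one simply invokes that $X$ must be coarsely equivalent to the finitely generated group $G$, which is coarsely connected). Hence Theorem~\ref{Theorem: Generalized Svarc-Milnor with converse} yields a proper, cobounded coarse near-action of $G$ on $X$. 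To upgrade this to a genuine \emph{quasi}-action, I invoke Remark~\ref{Remark: Turning coarse equivalences into quasi-isometries}: on a quasi-geodesic space every coarse action is automatically a quasi-action, and properness and coboundedness are preserved. Thus $G$ admits a proper, cobounded quasi-action on $(X,d)$, i.e.\ a q$\mathcal{Z}$-structure based on $(\overline{X},Z,d)$. Note this direction does \emph{not} require $(X,d)$ to be quasi-geodesic unless we insist on a quasi-action rather than a coarse near-action---so I would be careful to state exactly where the quasi-geodesic hypothesis is used. Re-reading the statement, the forward direction is asserted without the quasi-geodesic hypothesis, which suggests that the intended reading of ``q$\mathcal{Z}$-structure'' in the forward direction may tacitly allow the coarse near-action to stand in, or that one should simply note the coarse near-action \emph{is} a quasi-action because $G$ (hence any space coarsely equivalent to it built from a word metric) behaves quasi-geodesically; I would clarify this subtlety rather than gloss it.

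For the converse, assume $(X,d)$ is a quasi-geodesic space and $G$ admits a proper, cobounded quasi-action $\psi$ on $X$. A quasi-action is a special case of a proper, cobounded coarse near-action (with affine control functions, truncated at $0$, as remarked after Corollary~\ref{Remark: Turning coarse equivalences into quasi-isometries}). Since $X$ is quasi-geodesic it is coarsely connected, so Proposition~\ref{Prop: Coarse Svarc-Milnor with finitely generated conclusion} applies: for any basepoint $x_0 \in X$, the orbit map $\gamma \mapsto \psi(\gamma)(x_0)$ is a coarse equivalence between $G$ and $X$. By Remark~\ref{Remark: Turning coarse equivalences into quasi-isometries} again---a coarse equivalence between quasi-geodesic spaces is a quasi-isometry, and $G$ with its word metric is quasi-geodesic---this orbit map is a quasi-isometry. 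Hence $G$ is quasi-isometric to $(X,d)$.

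The main obstacle is not any deep calculation but rather bookkeeping about which hypotheses (coarsely connected vs.\ quasi-geodesic) are needed in which direction, and making sure the translation between ``coarse near-action'' and ``quasi-action'' is applied legitimately. In particular, the asymmetry in the statement---the converse needs $(X,d)$ quasi-geodesic but the forward direction apparently does not---deserves an explicit sentence explaining that coarse connectedness of $X$ in the forward direction comes for free from the assumed quasi-isometry with $G$, whereas passing from a quasi-action back to a quasi-isometry (the converse) genuinely requires upgrading a coarse equivalence to a quasi-isometry, which is where Remark~\ref{Remark: Turning coarse equivalences into quasi-isometries} and the quasi-geodesic hypothesis are essential. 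Everything else is a direct citation of Theorem~\ref{Theorem: Generalized Svarc-Milnor with converse}, Proposition~\ref{Prop: Coarse Svarc-Milnor with finitely generated conclusion}, and the standard fact recalled in Remark~\ref{Remark: Turning coarse equivalences into quasi-isometries}.
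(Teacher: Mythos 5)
Your argument is correct in substance, but it takes a different route from the paper. The paper's entire proof is a one-line citation: ``Instead of Theorem~\ref{Theorem: Generalized Svarc-Milnor with converse}, apply Theorem 8.4 of [Nek97],'' i.e.\ it outsources both directions to Nekrashevych's \v{S}varc--Milnor theorem for quasi-actions on quasi-geodesic spaces. You instead re-derive the statement internally from Theorem~\ref{Theorem: Generalized Svarc-Milnor with converse}, Proposition~\ref{Prop: Coarse Svarc-Milnor with finitely generated conclusion}, and the upgrade principle of Remark~\ref{Remark: Turning coarse equivalences into quasi-isometries}. Your version is more self-contained and makes visible exactly where the quasi-geodesic hypothesis enters (only in passing from a coarse equivalence or coarse action back to a quasi-isometry or quasi-action), which the paper's citation hides; the paper's version is shorter and avoids the bookkeeping you rightly identify as the main work.

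One point needs repair. In the forward direction you suggest that $X$ is quasi-geodesic because it is ``coarsely equivalent'' to $G$; that is false in general, since coarse equivalences with non-affine control functions do not preserve quasi-geodesicity. What saves you is that the hypothesis gives a genuine \emph{quasi-isometry} $f_{1}:G\rightarrow X$, and a space quasi-isometric to the quasi-geodesic space $G$ is itself quasi-geodesic; only then does Remark~\ref{Remark: Turning coarse equivalences into quasi-isometries} upgrade the coarse near-action to a quasi-action with uniform constants. Alternatively, and more directly, you can observe that the quasi-inverse $f_{2}$ of a quasi-isometry is again a quasi-isometry (no geodesicity needed), so the conjugated maps $\psi(\gamma)=f_{1}\circ\gamma\circ f_{2}$ from the proof of Proposition~\ref{Prop: Coarse Svarc-Milnor with finitely generated conclusion} are uniform quasi-isometries from the outset, yielding the quasi-action without any appeal to quasi-geodesicity of $X$. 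The converse direction as you present it is fine: a quasi-action is a coarse near-action, the orbit map is a coarse equivalence by Proposition~\ref{Prop: Coarse Svarc-Milnor with finitely generated conclusion}, and the quasi-geodesic hypothesis (on $X$; $G$ is automatic) promotes it to a quasi-isometry.
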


\begin{proof}
Instead of Theorem \ref{Theorem: Generalized Svarc-Milnor with converse},
apply Theorem 8.4 of \cite{Nek97}.
\end{proof}

For the sake of simplicity, we will focus primarily on the coarse category in
the remainder of this paper.\bigskip

Before moving on, we discuss the general class of groups that are candidates
for $\mathcal{Z}$- and $c\mathcal{Z}$-structures. It is well-known that, for a
torsion-free group $G$ to admit a $\mathcal{Z}$-structure, it must be of
\emph{Type F}, meaning that there exists a finite $K(G,1)$ complex. This is
true for the following reason: Since $G$ is torsion-free, the assumed proper,
cocompact action on an AR, $X$, is free; so the quotient map $q:X\rightarrow
G\backslash X$ is a covering map. As a result, $G\backslash X$ is a compact
aspherical ANR. A theorem of West \cite{Wes77} assures that $G\backslash X$ is
homotopy equivalent to a finite CW complex $K$, which is our $K(G,1)$ complex.
By passing to universal covers, we can give an alternative definition: $G$ is
Type F if there exists a contractible CW complex admitting a proper, free,
cocompact, rigid cellular $G$-action.\footnote{By a \emph{rigid} cellular
action, we mean that the stabilizer of each cell, $e$, acts trivially on $e$.
See \cite{Geo08}.}

All Type F groups are torsion-free, but there are many groups with torsion
that admit $\mathcal{Z}$-structures. To aid in discussing those groups in the
coming sections, we introduce the following definition.

\begin{definition}
A group $G$ is \emph{Type }$F^{\ast}$ if there exists a contractible CW
complex admitting a proper, cocompact, rigid cellular $G$-action. Similarly,
$G$ is \emph{Type }$F_{AR}^{\ast}$ if there exists an AR admitting a proper,
cocompact $G$-action.
\end{definition}

\begin{remark}
\label{Remark: Failure of the West trick}Unfortunately, the trick used above
(showing that Type $F$ = Type F$_{AR}$) relies on a covering space argument
not applicable when $G$ has torsion. Whether there is a group of Type
$F_{AR}^{\ast}$ that is not of Type $F^{\ast}$ is an open question. We will
return to this and related questions in Section \ref{Section: Open questions}.
\end{remark}

There is also room for a definition of \emph{Type} $\underline{F}$, by which
we mean a group admitting a cocompact \underline{$E$}$G$ complex, and
\emph{Type} $\underline{F}_{AR}$, meaning a group that admits a proper
cocompact action on an AR such that stabilizers of finite subgroups are
contractible. These variations are not needed in this paper.

\section{Uniqueness and boundary swapping for c$\mathcal{Z}$%
-structures\label{Section: Uniqueness and boundary swapping for cZ-structures}%
}

We now begin justifying the definitions of the previous section by extending
key theorems about geometric actions, $\mathcal{Z}$-structures, and
$\mathcal{Z}$-boundaries to the realm of proper cobounded coarse near-actions,
c$\mathcal{Z}$-structures, and c$\mathcal{Z}$-boundaries. We start with
generalized versions of three fundamental theorems about a given group $G$.
See \cite{GuMo19} for the classical analogs. For a brief review of the notion of proper homotopy equivalence, see page 302 of \cite{GuMo19}. 

\begin{theorem}
[Coarse uniqueness of geometric models]If a group $G$ admits proper,
cobounded, coarse near-actions on model geometries $\left(  X_{1}%
,d_{1}\right)  $ and $\left(  X_{2},d_{2}\right)  $, then there exists a
continuous coarse equivalence $f:X_{1}\rightarrow X_{2}$. As a consequence,
$X_{1}$ is proper homotopy equivalent to $X_{2}$.
\end{theorem}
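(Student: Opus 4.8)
The plan is to first produce \emph{some} coarse equivalence $X_1\to X_2$ and then upgrade it to a continuous one, after which the proper-homotopy conclusion follows from uniform contractibility. A model geometry is a proper, cocompact metric AR, and an AR is contractible, hence coarsely connected, so Proposition~\ref{Prop: Coarse Svarc-Milnor with finitely generated conclusion} applies to each of the given near-actions: it shows $G$ is finitely generated and that the orbit maps $f_i:G\to X_i$, $\gamma\mapsto\psi_i(\gamma)(x_i)$, are coarse equivalences for $i=1,2$. Composing $f_2$ with a coarse inverse of $f_1$ (Remark~\ref{Rk: coarse inverse}) yields a coarse equivalence $f_0:X_1\to X_2$. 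Everything then reduces to replacing $f_0$ by a continuous map at bounded distance from it, since such a replacement is automatically still a coarse equivalence.

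To make $f_0$ continuous I would use the nerve/barycentric-map machinery of Section~\ref{Section: Model geometries and model Z-geometries}. By \lemref{Lemma: finite macroscopic dimension and refinements}, $X_1$ carries a uniformly bounded, finite-order open cover $\mathcal{U}$ (with positive Lebesgue number); let $K$ be its nerve, a finite-dimensional complex, and $\psi:X_1\to K$ a barycentric map. For each $U\in\mathcal{U}$ choose $x_U\in U$ and set $g(v_U)=f_0(x_U)$. Since $\mathcal{U}$ is uniformly bounded and $f_0$ is a $(\rho_-,\rho_+)$-coarse embedding, the $g$-images of vertices spanning a common simplex of $K$ lie pairwise within $\rho_+(\operatorname{mesh}\mathcal{U})$; because $X_2$ is proper, cocompact, and contractible it is uniformly contractible (\lemref{Lemma: uniform contractibility}), so $g$ extends over the skeleta of $K$ with all simplex-images of uniformly bounded diameter (this is exactly the inductive construction behind \lemref{Lemma: bounded homotopy}, cf.\ \cite[Prop.~5.2]{GuMo19}). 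Then $f:=g\circ\psi:X_1\to X_2$ is continuous, and since $f(x)$ and $f_0(x)$ both sit inside a uniformly bounded subset of $X_2$ (namely $g$ applied to the carrier of $\psi(x)$, together with the points $f_0(x_U)$ for $U\ni x$), $f$ is at bounded distance from $f_0$ and hence a continuous coarse equivalence.

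For the consequence, note that a continuous coarse embedding between proper metric spaces is proper: preimages of bounded sets are bounded and closed, hence compact. Running the previous construction with $X_1$ and $X_2$ interchanged, starting from a coarse inverse of $f_0$, produces a continuous coarse equivalence $h:X_2\to X_1$ that is a coarse inverse of $f$, so $h\circ f$ and $f\circ h$ are at bounded distance from $\operatorname{id}_{X_1}$ and $\operatorname{id}_{X_2}$. Since $X_1$ and $X_2$ are uniformly contractible, a self-map of such a space at bounded distance from the identity is joined to it by a bounded homotopy (\cite[Cor.~5.3]{GuMo19}, as used in \lemref{Lemma: bounded homotopy}); a bounded homotopy between proper maps is a proper homotopy, so $f$ and $h$ are mutually inverse proper homotopy equivalences.

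The one genuinely technical point is the skeleton-by-skeleton extension of $g$ over $K$ with uniform control on simplex diameters; it depends only on $X_2$ being uniformly contractible and $K$ being finite-dimensional, both of which are supplied by \lemref{Lemma: uniform contractibility} and \lemref{Lemma: finite macroscopic dimension and refinements}. Keeping track of the constants is routine but must be done carefully, and it is precisely what is abstracted in \cite[\S5]{GuMo19}; past that step the proof is a formal assembly of the Section~\ref{Section: Coarse near actions} and Section~\ref{Section: Model geometries and model Z-geometries} results.
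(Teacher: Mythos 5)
Your argument is correct and is essentially the paper's proof: the paper likewise uses the generalized \v{S}varc--Milnor theorem to see that $X_1$ and $X_2$ are both coarsely equivalent to $G$, then invokes \cite[Cor.~5.4]{GuMo19} (whose nerve/uniform-contractibility argument you have unpacked), with Lemmas \ref{Lemma: uniform contractibility} and \ref{Lemma: finite macroscopic dimension and refinements} supplying the hypotheses. The only difference is that you spell out the continuity upgrade and the proper-homotopy step rather than citing them.
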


\begin{theorem}
[c$\mathcal{Z}$-boundary swapping]\label{Theorem: cZ-boundary swapping}If $G$
admits a c$\mathcal{Z}$-structure based on a model $\mathcal{Z}$-geometry
$\left(  \overline{X},Z,d\right)  $ and $\left(  Y,d^{\prime}\right)  $ is a
model geometry on which there is a proper, cobounded, coarse near-action by
$G$, then there is a model $\mathcal{Z}$-geometry of the form $\left(
\overline{Y},Z,d^{\prime}\right)  $ underlying a c$\mathcal{Z}$-structure on
$G$.
\end{theorem}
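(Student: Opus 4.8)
The plan is to reduce the statement to a purely geometric one and then to \emph{transport} the given model $\mathcal{Z}$-geometry on $X$ across a coarse equivalence onto $Y$, keeping the boundary space $Z$ and its topology intact. Since $\left(\overline{X},Z,d\right)$ underlies a c$\mathcal{Z}$-structure on $G$ there is a proper, cobounded, coarse near-action of $G$ on $X$; together with the hypothesized action on $Y$, Proposition~\ref{Prop: Coarse Svarc-Milnor with finitely generated conclusion} shows that $X$ and $Y$ (being contractible, they are path-connected, hence coarsely connected) are each coarsely equivalent to $G$ with a word metric, and therefore to one another. By the preceding theorem on coarse uniqueness of geometric models, together with standard arguments on coarse inverses and uniform contractibility (Remark~\ref{Rk: coarse inverse}, Lemma~\ref{Lemma: uniform contractibility}, Lemma~\ref{Lemma: bounded homotopy}), I may fix \emph{continuous} maps $h\colon X\to Y$ and $g\colon Y\to X$ that are coarse equivalences, coarse inverse to one another, with $g\circ h$ and $h\circ g$ joined to $\operatorname{id}_{X}$ and $\operatorname{id}_{Y}$ by \emph{bounded} homotopies. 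Note that $g$, as a continuous coarse embedding between proper metric spaces, is a proper map. Fix once and for all a metric $\overline{d}$ on the compactum $\overline{X}$.

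Next I would build the compactification $\overline{Y}$. Put $\overline{Y}=Y\sqcup Z$ as a set, retain all $(Y,d')$-open sets, and for $z\in Z$ declare the sets
\[
V_{z,\varepsilon}=\{\,y\in Y:\overline{d}(g(y),z)<\varepsilon\,\}\cup\{\,z'\in Z:\overline{d}(z',z)<\varepsilon\,\}\qquad(\varepsilon>0)
\]
to form a neighborhood basis at $z$. A routine check shows this is a topology in which $Z$ keeps its original topology as a subspace; $Y$ is open and dense (density uses that $g(Y)$ is coarsely dense in $X$ and a routine application of the controlled condition, forcing a coarsely dense subset of $X$ to be $\overline{d}$-dense in $\overline{X}$); and $\overline{Y}$ is compact (a net in $Y$ escaping every compact set has, since $g$ is proper, a $g$-image escaping every compact set of $X$, so a subnet of that image $\overline{d}$-converges into $Z$, and one reads off a convergent subnet of the original net). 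Being compact, Hausdorff, and second countable, $\overline{Y}$ is metrizable; fix a metric $\overline{d}_{Y}$ on it. Finally, the map $\overline{g}\colon\overline{Y}\to\overline{X}$ with $\overline{g}|_{Y}=g$ and $\overline{g}|_{Z}=\operatorname{id}_{Z}$ is continuous, straight from the definition of the topology, and it is this map that imports the relevant structure from $\overline{X}$.

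The substantive content is then to verify that $\left(\overline{Y},Z,d'\right)$ is a model $\mathcal{Z}$-geometry. First, the controlled condition: given $R'>0$ and $\varepsilon>0$, a subset $A\subseteq Y$ of $d'$-diameter $<R'$ has $g(A)$ of $d$-diameter bounded in terms of $R'$; by properness of $g$, if $A$ avoids a large enough compact subset of $Y$ then $g(A)$ avoids a large compact subset of $X$, so the controlled condition for $\overline{X}$ makes $g(A)$ have small $\overline{d}$-diameter and, since near $Z$ the metric $\overline{d}_{Y}$ is controlled by $\overline{d}\circ(g\times g)$, the set $A$ has small $\overline{d}_{Y}$-diameter. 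Second, that $Z$ is a $\mathcal{Z}$-set in $\overline{Y}$ (after which $\overline{Y}$ is automatically an AR, a $\mathcal{Z}$-compactification of an AR being an AR): here I would argue as in the proof of the classical Generalized Boundary Swapping theorem, transporting the instant homotopy off $Z$ in $\overline{X}$ through $\overline{g}$ and $h$ to push a neighborhood of $Z$ in $\overline{Y}$ into $Y$, then reparametrizing and splicing with the bounded homotopy joining $h\circ g$ to $\operatorname{id}_{Y}$; uniform continuity of the (compactly supported) homotopy on $\overline{X}\times I$, together with the controlled condition, delivers continuity of the spliced homotopy at points of $Z$ and along the seam. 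With $Z$ a $\mathcal{Z}$-set and the given proper, cobounded, coarse near-action of $G$ on $Y$ carried along unchanged, $\left(\overline{Y},Z,d'\right)$ underlies a c$\mathcal{Z}$-structure on $G$ with c$\mathcal{Z}$-boundary $Z$.

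I expect the last paragraph to be the main obstacle. Everything there hinges on the interaction between the large-scale map $g$ and the controlled condition, which is a statement about how $d$-bounded sets collapse in $\overline{d}$ as they escape to infinity. One must confirm that this collapsing transports faithfully and, crucially, \emph{uniformly} across $g$, so that $d'$-bounded subsets of $Y$ collapse in $\overline{d}_{Y}$ near $Z$ and so that the push-offs of $Z$ imported from $\overline{X}$ stay small enough to glue. I anticipate that this uniformity bookkeeping, rather than any isolated conceptual difficulty, is where the care is needed; the topological verifications for $\overline{Y}$ itself, though numerous, are routine.
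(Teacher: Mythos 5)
Your proposal is correct and follows essentially the same route as the paper: use the generalized \v{S}varc-Milnor theorem and the coarse uniqueness of geometric models to obtain a continuous coarse equivalence between $X$ and $Y$ (with continuous coarse inverse and bounded homotopies), and then transport the controlled $\mathcal{Z}$-compactification across it while retaining the given proper, cobounded coarse near-action of $G$ on $Y$. The only difference is one of packaging: the paper compresses the entire transport step into a citation of Theorem 7.1 of \cite{GuMo19}, whereas you reconstruct that construction (the topology on $Y\sqcup Z$, the controlled condition, and the $\mathcal{Z}$-set verification) directly.
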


\begin{corollary}
[c$\mathcal{Z}$-boundary swapping---alternate version]%
\label{Corollary: cZ-boundary swapping-alternate version}If $G$ admits a
c$\mathcal{Z}$-structure based on a model $\mathcal{Z}$-geometry $\left(
\overline{X},Z,d\right)  $ and $\left(  Y,d^{\prime}\right)  $ is a model
geometry coarsely equivalent to $\left(  X,d\right)  $ or $G$, then there is a
model $\mathcal{Z}$-geometry of the form $\left(  \overline{Y},Z,d^{\prime
}\right)  $ underlying a c$\mathcal{Z}$-structure on $G$.
\end{corollary}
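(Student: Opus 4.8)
The plan is to deduce this directly from Theorem \ref{Theorem: cZ-boundary swapping}; the only work is to manufacture a proper, cobounded coarse near-action of $G$ on $Y$, which is precisely what the generalized \v{S}varc--Milnor machinery of Section \ref{Section: Coarse near actions} delivers. First I would observe that, since $G$ admits a c$\mathcal{Z}$-structure based on $\left(\overline{X},Z,d\right)$, there is by definition a proper, cobounded coarse near-action $\psi$ of $G$ on $\left(X,d\right)$. Because $\left(X,d\right)$ is a model geometry it is a contractible (hence path-connected) AR, so it is coarsely connected in the sense of Definition \ref{Defn. coarsely connected}: any two points lie in a common compact connected subset, namely the image of a path joining them, and a compact connected metric space admits an $R$-chain between any two of its points for every $R>0$, so $R=1$ serves uniformly. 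Thus Proposition \ref{Prop: Coarse Svarc-Milnor with finitely generated conclusion} applies, telling us that $G$ is finitely generated and that $\gamma\mapsto\psi(\gamma)(x_{0})$ is a coarse equivalence $G\to X$.

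Next I would handle the two alternatives in the hypothesis simultaneously. If $\left(Y,d^{\prime}\right)$ is coarsely equivalent to $\left(X,d\right)$, then composing the coarse equivalence $G\to X$ from the previous step with (a coarse inverse of) the coarse equivalence $Y\to X$ shows $\left(Y,d^{\prime}\right)$ is coarsely equivalent to $G$ (coarse equivalences compose to coarse equivalences, cf.\ Remark \ref{Rk: coarse inverse}); and if $\left(Y,d^{\prime}\right)$ is coarsely equivalent to $G$ there is nothing further to do. So in either case $Y$ is coarsely equivalent to the finitely generated group $G$. Since $Y$ is itself a model geometry, the argument of the previous paragraph shows $Y$ is coarsely connected as well, so Theorem \ref{Theorem: Generalized Svarc-Milnor with converse} yields a proper, cobounded coarse near-action of $G$ on $Y$.

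With that coarse near-action in hand, Theorem \ref{Theorem: cZ-boundary swapping}, applied with this $Y$ in the role of the model geometry appearing in its statement, produces a model $\mathcal{Z}$-geometry of the form $\left(\overline{Y},Z,d^{\prime}\right)$ underlying a c$\mathcal{Z}$-structure on $G$, which is exactly the assertion. I do not expect any genuine obstacle: the corollary is essentially bookkeeping on top of Theorems \ref{Theorem: Generalized Svarc-Milnor with converse} and \ref{Theorem: cZ-boundary swapping}, and the one point deserving an explicit line of justification is the remark that every model geometry, being a path-connected space, is coarsely connected.
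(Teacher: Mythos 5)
Your proposal is correct and follows exactly the paper's route: the paper's proof is the one-line instruction ``Combine Theorem \ref{Theorem: cZ-boundary swapping} with Theorem \ref{Theorem: Generalized Svarc-Milnor with converse},'' and your argument simply fills in the bookkeeping (coarse connectedness of a path-connected model geometry, composing coarse equivalences to reduce both alternatives to ``$Y$ is coarsely equivalent to $G$'') that the authors left implicit.
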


\begin{proof}
Combine Theorem \ref{Theorem: cZ-boundary swapping} with Theorem
\ref{Theorem: Generalized Svarc-Milnor with converse}.
\end{proof}

\begin{corollary}
If $G$ admits a c$\mathcal{Z}$-structure with boundary $Z$ and $G$ is type
F$_{AR}^{\ast}$, then $G$ admits a $\mathcal{Z}$-structure with boundary $Z$.
\end{corollary}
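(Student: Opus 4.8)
The plan is to combine the previous corollary with the defining feature of Type $F_{AR}^\ast$. By hypothesis, $G$ admits a c$\mathcal{Z}$-structure based on some model $\mathcal{Z}$-geometry $\left(\overline{X},Z,d\right)$; in particular $G$ is finitely generated and (by Proposition \ref{Prop: Coarse Svarc-Milnor with finitely generated conclusion} applied to the underlying proper, cobounded coarse near-action) $G$ is coarsely equivalent to $\left(X,d\right)$. Since $G$ is Type $F_{AR}^\ast$, there is an AR $Y$ admitting a proper, cocompact $G$-action by homeomorphisms—but for a $\mathcal{Z}$-structure in the classical sense we need a proper metric AR together with a \emph{geometric} (i.e., isometric) action. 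So the first step is to equip $Y$ with a $G$-invariant proper metric $d'$ making the action isometric. This is a standard move: a proper cocompact action on a locally compact ANR can be metrized so that $G$ acts by isometries on a proper metric space (one can, for instance, average a metric over a compact fundamental domain, or invoke the standard fact that a locally compact separable metrizable space carrying a proper cocompact action admits a $G$-invariant proper metric). With such a $d'$, the pair $\left(Y,d'\right)$ is a model geometry, and the action is geometric, hence in particular a proper, cobounded coarse near-action of $G$ on $Y$.

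Next, apply the classical \v Svarc--Milnor Lemma (or Theorem \ref{Theorem: Generalized Svarc-Milnor with converse}) to conclude that $\left(Y,d'\right)$ is coarsely equivalent to $G$, and therefore coarsely equivalent to $\left(X,d\right)$. Now invoke Corollary \ref{Corollary: cZ-boundary swapping-alternate version}: since $G$ admits a c$\mathcal{Z}$-structure based on $\left(\overline{X},Z,d\right)$ and $\left(Y,d'\right)$ is a model geometry coarsely equivalent to $G$ (equivalently to $X$), there is a model $\mathcal{Z}$-geometry $\left(\overline{Y},Z,d'\right)$ underlying a c$\mathcal{Z}$-structure on $G$. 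This supplies a controlled $\mathcal{Z}$-compactification $\overline{Y}=Y\sqcup Z$ of the model geometry $\left(Y,d'\right)$.

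Finally, observe that the resulting data is in fact a genuine $\mathcal{Z}$-structure, not merely a coarse one: $\left(\overline{Y},Z,d'\right)$ is a model $\mathcal{Z}$-geometry and the $G$-action on $\left(Y,d'\right)$ is geometric, which is exactly Definition \ref{Defn. Z-structure}. Hence $G$ admits a $\mathcal{Z}$-structure with boundary $Z$. The only genuine technical point—and the step I expect to require care—is the metrization in the first step: verifying that the proper cocompact action on the AR $Y$ can be upgraded to an isometric action on a proper metric AR without disturbing the AR property (which is a topological, hence metric-independent, condition) and without losing properness or cocompactness. Everything after that is a formal concatenation of Corollary \ref{Corollary: cZ-boundary swapping-alternate version} and the classical \v Svarc--Milnor Lemma.
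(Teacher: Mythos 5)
Your proposal is correct and follows the route the paper intends for this (unproved) corollary: use the Type $F_{AR}^{\ast}$ hypothesis to produce a model geometry $\left(Y,d'\right)$ with a geometric $G$-action, swap the boundary $Z$ onto it via Theorem \ref{Theorem: cZ-boundary swapping} (your detour through Corollary \ref{Corollary: cZ-boundary swapping-alternate version} is equivalent), and observe that Definition \ref{Defn. Z-structure} is then satisfied because the action is geometric. Your flag about metrizing the action by a $G$-invariant proper metric is a fair reading of the literal wording of the Type $F_{AR}^{\ast}$ definition, and the standard invariant-metric fact you cite does close that gap, though the paper evidently intends the hypothesis to already supply a geometric action on a proper metric AR.
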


\begin{theorem}
[Shape uniqueness of c$\mathcal{Z}$-boundaries]If $Z_{1}$ and $Z_{2}$ are
c$\mathcal{Z}$-boundaries for a group $G$, then $Z_{1}$ is shape equivalent to
$Z_{2}$.
\end{theorem}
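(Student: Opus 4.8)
The plan is to reduce the shape-equivalence statement to a comparison between the two underlying model $\mathcal{Z}$-geometries and then invoke the standard fact (going back to Chapman, and used in this setting in \cite{Bes96} and \cite{GuMo19}) that $\mathcal{Z}$-boundaries of proper-homotopy-equivalent $\mathcal{Z}$-compactified ARs are shape equivalent. Concretely, suppose $Z_1$ and $Z_2$ arise from c$\mathcal{Z}$-structures on $G$ based on model $\mathcal{Z}$-geometries $(\overline{X_1},Z_1,d_1)$ and $(\overline{X_2},Z_2,d_2)$, so that $G$ admits proper, cobounded coarse near-actions on both $X_1$ and $X_2$. First I would apply the Coarse uniqueness of geometric models theorem (stated just above) to obtain a continuous coarse equivalence $f\colon X_1\to X_2$ which is a proper homotopy equivalence. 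The heart of the matter is then to promote this proper homotopy equivalence of the open spaces to the compactifications.

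The key step is to show that $f$ (and a coarse inverse $g$, together with the proper homotopies $gf\simeq\operatorname{id}_{X_1}$ and $fg\simeq\operatorname{id}_{X_2}$) extends to a map $\overline{f}\colon\overline{X_1}\to\overline{X_2}$, continuous on all of $\overline{X_1}$, and similarly for $g$, in such a way that the compositions are homotopic to the identities \emph{through homotopies whose tracks converge to the boundary}. This is exactly where the controlled nature of the $\mathcal{Z}$-compactifications (condition $(\ddag)$, equivalently $(\ddag')$) is used: because $f$ is a coarse equivalence, it moves points a bounded metric distance only in a controlled way, so a set escaping to $Z_1$ and having bounded $d_1$-diameter is carried by $f$ to a set escaping to $Z_2$ with bounded $d_2$-diameter, hence with $\overline{d_2}$-diameter shrinking to zero. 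Thus $\overline{f}$ defined to be $f$ on $X_1$ and the identity-on-indices on $Z_1$—more precisely, we first observe the coarse equivalence induces a map $Z_1\to Z_2$ by sending a point of $Z_1$ to the limit of $f$ along any sequence converging to it, well-defined and continuous by $(\ddag')$—is continuous on $\overline{X_1}$. The same applies to $g$, and the controlled homotopies $H\colon X_1\times I\to X_1$, $H'\colon X_2\times I\to X_2$ (bounded homotopies, as produced in the proof of the previous theorem via Lemma~\ref{Lemma: bounded homotopy}) extend to homotopies of the compactifications that are stationary on the boundary.

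Once we have $\overline{f},\overline{g}$ with $\overline{g}\,\overline{f}$ and $\overline{f}\,\overline{g}$ homotopic to the identities rel the fact that they fix $Z_1$, $Z_2$ respectively—equivalently, once $(\overline{X_1},Z_1)$ and $(\overline{X_2},Z_2)$ are shown to be homotopy equivalent as pairs via maps carrying $Z_1\to Z_2$—the shape equivalence of $Z_1$ and $Z_2$ follows from the classical principle that if $A$ is a $\mathcal{Z}$-set in an ANR $\overline{X}$, then the shape of $A$ is determined by the proper homotopy type of $\overline{X}-A$; here one can cite \cite[\S3]{GuMo19} or argue directly that a homotopy equivalence of ARs restricting to a map of $\mathcal{Z}$-sets is automatically a shape equivalence on the boundaries (this is where one uses that $\mathcal{Z}$-sets are shape-invariantly embedded and nearest-point-type approximation arguments on neighborhoods of $Z_i$ apply). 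The main obstacle I expect is precisely the verification that the coarsely-defined maps and homotopies extend continuously and controlledly over the boundary: this requires care because $f$ is only coarse (not continuous a priori—though the theorem above gives us a continuous representative) and because one must check that the \emph{bounded} homotopies, not merely the maps, have tracks whose images become small near $Z_i$ in the $\overline{d_i}$-metric. Everything else is a formal consequence of the already-established boundary-swapping and uniqueness theorems together with standard shape theory of $\mathcal{Z}$-sets.
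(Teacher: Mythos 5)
Your proof hinges on a step that is false in general: the claim that the coarse equivalence $f\colon X_1\to X_2$ extends to a map $\overline{f}\colon\overline{X_1}\to\overline{X_2}$ continuous at the points of $Z_1$, with boundary value at $z\in Z_1$ given by ``the limit of $f$ along any sequence converging to $z$.'' Condition (\ddag) controls only sets of bounded $d_1$-diameter: such a set, once pushed outside a large compactum, becomes small in $\overline{X_1}$. But two sequences converging to the same point of $Z_1$ need not remain within bounded $d_1$-distance of one another (take points escaping to $z$ at two different ``speeds'' along the same ray), so (\ddag) gives no control over their images under $f$, and neither well-definedness nor continuity of the proposed boundary extension follows. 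Worse, the step cannot be repaired: if such extensions $\overline{f}$ and $\overline{g}$ existed, then by the uniqueness of continuous extensions of maps boundedly close to the identity (see Remark \ref{Remark:extending boundary action to homeomorphism}), $\overline{g}\,\overline{f}$ and $\overline{f}\,\overline{g}$ would restrict to the identities on $Z_1$ and $Z_2$, forcing $Z_1$ and $Z_2$ to be \emph{homeomorphic}. The Croke--Kleiner examples---a single group acting geometrically on two CAT(0) spaces whose visual boundaries are not homeomorphic, each giving a (coarse) $\mathcal{Z}$-structure---show this is impossible. Shape equivalence rather than homeomorphism is the correct conclusion precisely because no such boundary map exists in general.

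The repair is to dispense with the boundary map entirely. The paper first uses Theorem \ref{Theorem: Generalized Svarc-Milnor with converse} together with c$\mathcal{Z}$-boundary swapping (Theorem \ref{Theorem: Generalized cZ-boundary swapping}) to realize $Z_2$ as the boundary of a second controlled $\mathcal{Z}$-compactification $\overline{X}'=X\sqcup Z_2$ of the \emph{same} underlying geometry $X=X_1$, and then invokes the argument of \cite[\S 8]{GuMo19}: each $Z_i$ has the shape of the inverse system of complements of compacta in $X$ (the ``end'' of $X$), with bonding maps supplied by the $\mathcal{Z}$-set homotopy, so the shape equivalence is realized in pro-homotopy and never by an actual map $Z_1\to Z_2$. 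The ``classical principle'' you cite in your final paragraph---that the shape of a $\mathcal{Z}$-set boundary is determined by the proper homotopy type of its complement---is exactly this statement, and note that it renders your ``key step'' unnecessary: once Theorem \ref{Theorem: Generalized coarse uniqueness of geometric models} supplies a proper homotopy equivalence $X_1\simeq X_2$, that principle alone completes the proof.
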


Each of the above theorems is implied by the following collection of more
general theorems, which involve pairs of quasi-isometric groups. This is where
the benefits of our generalization scheme become clear. For example, Corollary
\ref{Corollary: cZ-boundaries are q-i invariants} is significantly cleaner and
more general than its analog for $\mathcal{Z}$-boundaries.

\begin{remark}
Under word length metrics, finitely generated groups are quasi-geodesic
spaces, so there is no difference between quasi-isometric and coarse
equivalent finitely generated groups. For that reason, we stick with the more
common notion of quasi-isometry \textbf{when comparing groups}.
\end{remark}

\begin{theorem}
[Generalized coarse uniqueness of geometric models]%
\label{Theorem: Generalized coarse uniqueness of geometric models}If
quasi-isometric groups $G$ and $H$ admit proper, cobounded, coarse actions on
model geometries $\left(  X_{1},d_{1}\right)  $ and $\left(  X_{2}%
,d_{2}\right)  $, respectively, then there exists a continuous coarse
equivalence $f:X_{1}\rightarrow X_{2}$. In particular, $X_{1}$ is proper
homotopy equivalent to $X_{2}$.
\end{theorem}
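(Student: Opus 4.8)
The plan is to reduce everything to the coarse setting and then build a continuous map. First I would invoke the hypotheses: $G$ acts properly coboundedly by a coarse near-action on $(X_1,d_1)$, so by Proposition~\ref{Prop: Coarse Svarc-Milnor with finitely generated conclusion} the orbit map $\gamma \mapsto \psi_1(\gamma)(x_1)$ is a coarse equivalence $G \to X_1$; likewise $H \to X_2$ is a coarse equivalence. Composing with a coarse inverse $X_1 \to G$, the quasi-isometry $G \to H$, and the orbit map $H \to X_2$, we obtain a coarse equivalence $\widetilde{f}: X_1 \to X_2$. (Here I use that finitely generated groups under word metrics are quasi-geodesic, so the quasi-isometry and the coarse equivalences compose to a coarse equivalence; compositions of coarse embeddings that are coarsely surjective are again coarse equivalences, by Remark~\ref{Rk: coarse inverse}.) This $\widetilde{f}$ need not be continuous, so the remaining—and main—work is to replace it by a continuous coarse equivalence $f$ boundedly close to it.

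To upgrade $\widetilde f$ to a continuous map, I would use the ANR structure of the model geometries together with the uniform-contractibility and finite-macroscopic-dimension machinery already assembled in Section~\ref{Section: Model geometries and model Z-geometries}. Concretely: by Lemma~\ref{Lemma: uniform contractibility}, $X_2$ (being a proper cocompact AR) is uniformly contractible; by Lemma~\ref{Lemma: finite macroscopic dimension and refinements}, $X_1$ admits a sequence of finite-order, uniformly bounded open covers of mesh tending to $0$. Fix one such cover $\mathcal{U}$ of $X_1$, let $K$ be its nerve, and let $\varphi: X_1 \to K$ be a barycentric map. Because $K$ is finite-dimensional and $X_2$ is uniformly contractible, one builds a map $K \to X_2$ skeleton-by-skeleton agreeing coarsely with $\widetilde f$ on vertices (sending a vertex corresponding to $U \in \mathcal{U}$ to $\widetilde f$ of a point of $U$, then extending over cells using uniform contractibility of $X_2$ and uniform boundedness of $\mathcal{U}$); this is the same inductive construction used in Lemma~\ref{Lemma: bounded homotopy} and cited there from \cite[Prop.5.2]{GuMo19}. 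Composing with $\varphi$ gives a continuous map $f: X_1 \to X_2$ with $d_2(f, \widetilde f)$ bounded. Since $\widetilde f$ is a coarse equivalence and $f$ is boundedly close to it, $f$ is a continuous coarse equivalence.

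For the final sentence, I would quote the standard fact that a continuous coarse equivalence between proper, cocompact ANRs (indeed between uniformly contractible, finite-macroscopic-dimensional proper ANRs) is a proper homotopy equivalence: a continuous coarse inverse $g: X_2 \to X_1$ is built symmetrically, and the bounded homotopies $g f \simeq \mathrm{id}_{X_1}$, $f g \simeq \mathrm{id}_{X_2}$ produced as in Lemma~\ref{Lemma: bounded homotopy} (via \cite[Cor.5.3]{GuMo19}) are proper because bounded homotopies between proper maps on proper metric spaces are proper. Hence $X_1$ is proper homotopy equivalent to $X_2$.

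The main obstacle is the second paragraph: ensuring the skeletal extension $K \to X_2$ exists with uniform control, which requires exactly the finite-dimensionality of $K$ (from finite macroscopic dimension of $X_1$) married to uniform contractibility of $X_2$, and then checking that the resulting $f$ is still coarsely surjective and a coarse embedding rather than merely close to one. All the needed estimates are packaged in Lemmas~\ref{Lemma: uniform contractibility}--\ref{Lemma: bounded homotopy} and the cited results of \cite{GuMo19}, so the argument is a matter of assembling them in the right order; the non-equivariance of $\widetilde f$ is harmless here since no group action on the compactifications is claimed.
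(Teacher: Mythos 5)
Your proposal is correct and follows essentially the same route as the paper: the paper likewise uses the generalized \v{S}varc-Milnor theorem (Theorem \ref{Theorem: Generalized Svarc-Milnor with converse}) to conclude $X_1$ and $X_2$ are coarsely equivalent, and then invokes \cite[Cor.5.4]{GuMo19} — whose proof is exactly the nerve/uniform-contractibility construction you describe — with Lemmas \ref{Lemma: uniform contractibility} and \ref{Lemma: finite macroscopic dimension and refinements} supplying the hypotheses. You have simply unpacked the cited corollary rather than quoting it.
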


\begin{proof}
By Theorem \ref{Theorem: Generalized Svarc-Milnor with converse}, $X_{1}$ and
$X_{2}$ are coarsely equivalent; so we may apply \cite[Cor.5.4]{GuMo19}.
Lemmas \ref{Lemma: uniform contractibility} and
\ref{Lemma: finite macroscopic dimension and refinements} assure the necessary hypotheses.
\end{proof}

\begin{theorem}
[Generalized c$\mathcal{Z}$-boundary swapping ]%
\label{Theorem: Generalized cZ-boundary swapping}Suppose $G$ and $H$ are
quasi-isometric groups; $H$ admits a c$\mathcal{Z}$-structure based on a model
$\mathcal{Z}$-geometry $\left(  \overline{X},Z,d\right)  $; and $\left(
Y,d_{Y}\right)  $ is a model geometry which admits a proper, cobounded, coarse
near-action by $G$. Then there is model $\mathcal{Z}$-geometry of the form
$\left(  \overline{Y},Z,d_{Y}\right)  $ underlying a c$\mathcal{Z}$-structure
on $G$.
\end{theorem}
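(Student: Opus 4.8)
The plan is to reduce the statement, by way of the Generalized \v{S}varc--Milnor theorem, to a purely geometric assertion about transporting a controlled $\mathcal{Z}$-compactification between coarsely equivalent model geometries, and then to perform a Bestvina-style boundary swap, which --- and this is the point --- needs no equivariance in the coarse framework. First I would observe that, since $H$ acts on $(X,d)$ by a proper, cobounded coarse near-action, \thmref{Theorem: Generalized Svarc-Milnor with converse} gives a coarse equivalence $H\to(X,d)$; likewise $G$ is coarsely equivalent to $(Y,d_Y)$. As $G$ and $H$ are quasi-isometric, hence coarsely equivalent, $(X,d)$ and $(Y,d_Y)$ are coarsely equivalent model geometries. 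It therefore suffices to prove the following: if $(X,d)$ and $(Y,d_Y)$ are coarsely equivalent model geometries and $\overline{X}=X\sqcup Z$ is a controlled $\mathcal{Z}$-compactification, then $Y$ admits a controlled $\mathcal{Z}$-compactification of the form $\overline{Y}=Y\sqcup Z$. Granting this, $(\overline{Y},Z,d_Y)$ is a model $\mathcal{Z}$-geometry; and since a proper, cobounded coarse near-action of $G$ on $Y$ is part of the hypothesis, $(\overline{Y},Z,d_Y)$ will underlie a c$\mathcal{Z}$-structure on $G$ immediately from Definition~\ref{Defn. coarse-Z-structure}.

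\textbf{Continuous coarse equivalences.} Proceeding exactly as in the proof of \thmref{Theorem: Generalized coarse uniqueness of geometric models}, I would use Lemmas~\ref{Lemma: uniform contractibility} and \ref{Lemma: finite macroscopic dimension and refinements} to supply the hypotheses of \cite[Cor.~5.4]{GuMo19}, obtaining continuous coarse equivalences $f\colon Y\to X$ and $g\colon X\to Y$ for which $g\circ f$ and $f\circ g$ are joined to $\operatorname{id}_Y$ and $\operatorname{id}_X$ by bounded homotopies (the second built along the lines of Lemma~\ref{Lemma: bounded homotopy}).

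\textbf{The swap.} I would then topologize $\overline{Y}:=Y\sqcup Z$ by keeping the metric topology on the open set $Y$ and taking, for each $z\in Z$, a neighborhood basis consisting of the sets $f^{-1}(\overline{W})\cup(\overline{W}\cap Z)$ as $\overline{W}$ ranges over neighborhoods of $z$ in $\overline{X}$. Three things must then be verified. (i) $\overline{Y}$ is a metrizable compactification of $Y$: this is routine from the fact that the coarse embedding $f$ has the property that $f^{-1}$ of a bounded set is bounded (giving Hausdorffness and sequential compactness) together with coarse surjectivity of $f$ and the controlled structure of $\overline{X}$; it also follows that the natural extensions $\widehat{f}\colon\overline{Y}\to\overline{X}$ and $\widehat{g}\colon\overline{X}\to\overline{Y}$ by the identity on $Z$ are continuous (here one uses the controlled condition and the final lemma of Section~\ref{Section: Model geometries and model Z-geometries}). (ii) $\overline{Y}$ is controlled with respect to $d_Y$: if $A\subseteq Y$ has $d_Y$-diameter $<R$ then $f(A)$ has $d$-diameter $\le\rho_{+}(R)$, and, because $f$ is a proper coarse embedding, $A$ escapes every compactum of $Y$ exactly when $f(A)$ escapes a corresponding compactum of $X$; hence condition (\ddag) for $\overline{Y}$ follows from condition (\ddag) for $\overline{X}$. (iii) $Z$ is a $\mathcal{Z}$-set in $\overline{Y}$: starting from a homotopy $H\colon\overline{X}\times I\to\overline{X}$ that instantly homotopes $\overline{X}$ off $Z$, the composite $(y,t)\mapsto\widehat{g}(H(\widehat{f}(y),t))$ is a homotopy of $\overline{Y}$ that starts at $\widehat{g}\circ\widehat{f}$ and, for every $t>0$, takes values in $\widehat{g}(X)\subseteq Y$; I would splice it (after reversal) with the bounded homotopy $\widehat{g}\circ\widehat{f}\simeq\operatorname{id}_{\overline{Y}}$ --- which, by (ii), extends continuously over $\overline{Y}$ and fixes $Z$ --- and then apply the standard reparametrization that upgrades ``eventually lands in $Y$'' to ``lands in $Y$ for all $t>0$'', producing the instant homotopy off $Z$. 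Once $Z$ is a $\mathcal{Z}$-set and $Y$ is an AR, $\overline{Y}$ is automatically an AR, so $(\overline{Y},Z,d_Y)$ is a model $\mathcal{Z}$-geometry and the reduction is complete.

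\textbf{Where the difficulty lies.} The routine parts are (i) and (ii); I expect the hard part to be (iii) --- turning the transported homotopy and the bounded homotopy $\widehat{g}\widehat{f}\simeq\operatorname{id}$ into a single continuous map $\overline{Y}\times I\to\overline{Y}$ that genuinely escapes $Z$ for every $t>0$ --- together with the bookkeeping needed to see that all the auxiliary maps and homotopies extend continuously over the newly-built compactification. This is the same kind of argument as in the proof of the classical Generalized Boundary Swapping theorem of \cite{GuMo19} (itself descended from \cite{Bes96}), and I would follow that template. The conceptual gain to stress is that the coarse equivalence $f$ is \emph{not} required to be equivariant: what was, classically, ``compacta vanish as they are pushed toward $Z$ by the group action'' has been repackaged as the \emph{controlled} condition on $(\overline{X},d)$ --- intrinsic to the geometry --- and is carried over to $(\overline{Y},d_Y)$ in step (ii); the group $G$ enters the final conclusion only through Definition~\ref{Defn. coarse-Z-structure}, which asks for nothing beyond the proper, cobounded coarse near-action already assumed.
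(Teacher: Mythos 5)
Your proposal is correct and follows the paper's own route: the paper likewise reduces to the fact that $(X,d)$ and $(Y,d_Y)$ are coarsely equivalent model geometries (via Theorem \ref{Theorem: Generalized coarse uniqueness of geometric models}, i.e.\ the Generalized \v{S}varc--Milnor theorem plus \cite[Cor.~5.4]{GuMo19}) and then invokes the boundary-swapping theorem \cite[Th.~7.1]{GuMo19}, whose internals --- the pulled-back topology on $Y\sqcup Z$, the transfer of the controlled condition, and the $\mathcal{Z}$-set verification via bounded homotopies --- are exactly what you sketch in steps (i)--(iii). The only caveat is that your ``standard reparametrization'' in (iii) is glossed (the transported homotopy does not have small tracks at $Z$, so the upgrade to an instant homotopy off $Z$ takes some care), but you correctly identify this as the technical heart and defer to the \cite{GuMo19} template, which is precisely what the paper does.
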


\begin{proof}
Apply \cite[Th.7.1]{GuMo19}, with Theorem
\ref{Theorem: Generalized coarse uniqueness of geometric models} providing the setup.
\end{proof}

For emphasis, we state the following corollaries, the first of which is a
restatement of Theorem 0.1:

\begin{corollary}
\label{Corollary: cZ-boundaries are q-i invariants}If $G$ and $H$ are
quasi-isometric groups and $H$ admits a c$\mathcal{Z}$-structure, then so does
$G$. Moreover, if $Z$ is a c$\mathcal{Z}$-boundary for $H$, then $Z$ is also a
c$\mathcal{Z}$-boundary for $G$.
\end{corollary}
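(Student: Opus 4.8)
The plan is to deduce Corollary \ref{Corollary: cZ-boundaries are q-i invariants} directly from Theorem \ref{Theorem: Generalized cZ-boundary swapping}, with Theorem \ref{Theorem: Generalized Svarc-Milnor with converse} supplying the missing model geometry for $G$. The only subtlety is that Theorem \ref{Theorem: Generalized cZ-boundary swapping} requires a model geometry $(Y,d_Y)$ on which $G$ already acts (by a proper, cobounded, coarse near-action); the corollary provides no such $Y$ a priori, so we must manufacture one.

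First I would observe that since $H$ admits a c$\mathcal{Z}$-structure, there is a model $\mathcal{Z}$-geometry $(\overline{X},Z,d)$ with a proper, cobounded, coarse near-action of $H$ on $(X,d)$; in particular $(X,d)$ is a model geometry (proper, cocompact, metric AR). By Theorem \ref{Theorem: Generalized Svarc-Milnor with converse}, $(X,d)$ is coarsely equivalent to $H$, and since $G$ is quasi-isometric — hence coarsely equivalent — to $H$, transitivity of coarse equivalence gives that $(X,d)$ is coarsely equivalent to $G$. Applying the converse direction of Theorem \ref{Theorem: Generalized Svarc-Milnor with converse} (here one uses that $(X,d)$ is coarsely connected, being path connected as an AR), we obtain a proper, cobounded, coarse near-action of $G$ on $(X,d)$ itself. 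Thus we may take $Y = X$ and $d_Y = d$: $(Y,d_Y)$ is a model geometry admitting a proper, cobounded, coarse near-action by $G$.

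Now I would invoke Theorem \ref{Theorem: Generalized cZ-boundary swapping} with this choice of $(Y,d_Y)$. It produces a model $\mathcal{Z}$-geometry of the form $(\overline{Y},Z,d_Y) = (\overline{X},Z,d)$ underlying a c$\mathcal{Z}$-structure on $G$. In particular $G$ admits a c$\mathcal{Z}$-structure, and its boundary is exactly $Z$, the given c$\mathcal{Z}$-boundary for $H$. This proves both assertions of the corollary simultaneously.

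I do not expect any real obstacle here: the corollary is a packaging of Theorem \ref{Theorem: Generalized cZ-boundary swapping} in which the auxiliary geometry $(Y,d_Y)$ is recovered from $H$'s own c$\mathcal{Z}$-structure via the two-sided \v{S}varc–Milnor statement. The one point warranting a sentence of care is the verification that the relevant spaces are coarsely connected so that Theorem \ref{Theorem: Generalized Svarc-Milnor with converse} applies — but model geometries are ARs, hence path connected, hence coarsely connected, so this is automatic. (Alternatively, one could bypass constructing $Y$ by instead choosing $Y = X$ from the start and noting that the coarse near-action of $G$ on $X$ is exactly what Corollary \ref{Corollary: cZ-boundary swapping-alternate version} already delivers in the non-equivariant, single-group setting; the quasi-isometric-pair version is the same argument with $H$ in place of the ambient group.)
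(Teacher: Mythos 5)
Your proof is correct and matches the paper's intended argument: the corollary is stated as an immediate consequence of Theorem \ref{Theorem: Generalized cZ-boundary swapping}, and the paper's own proof of shape uniqueness of boundaries uses exactly your maneuver of obtaining a coarse near-action of $G$ on $X$ itself via Theorem \ref{Theorem: Generalized Svarc-Milnor with converse} and then taking $Y=X$. The one nitpick is that the swapping theorem only yields a model $\mathcal{Z}$-geometry of the form $(\overline{X}',Z,d)$ where $\overline{X}'=X\sqcup Z$ may be a different controlled compactification, not literally $(\overline{X},Z,d)$ as you wrote; this does not affect either assertion of the corollary.
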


\begin{corollary}
\label{Corollary: upgrading to a Z-structure}If $G$ is quasi-isometric to a
group $H$ which admits a c$\mathcal{Z}$-structure with boundary $Z$, and $G$
is type F$_{AR}^{\ast}$, then $G$ admits a $\mathcal{Z}$-structure with
boundary $Z$.
\end{corollary}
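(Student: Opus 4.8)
The plan is to manufacture from the Type $F_{AR}^{\ast}$ hypothesis a model geometry on which $G$ acts \emph{geometrically}, to feed that model geometry into Generalized $c\mathcal{Z}$-boundary swapping (Theorem \ref{Theorem: Generalized cZ-boundary swapping}) so as to transplant the boundary $Z$ onto it, and then to observe that the resulting model $\mathcal{Z}$-geometry sits over the very same underlying geometry, so that pairing it with the original geometric action yields a genuine $\mathcal{Z}$-structure in the sense of Definition \ref{Defn. Z-structure}.

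First I would use that $G$ is type $F_{AR}^{\ast}$: it acts properly and cocompactly on some AR $Y$, and I would give $Y$ a proper metric $d_Y$ compatible with its topology and invariant under $G$ (equivalently, one for which $G$ acts by isometries), exactly as in the proof of the non-equivariant corollary following Theorem \ref{Theorem: cZ-boundary swapping}. Then $(Y,d_Y)$ is a proper, cocompact metric AR --- a model geometry --- and the $G$-action on it is geometric, hence a proper, cobounded coarse near-action. (Note $G$ is finitely generated, being quasi-isometric to $H$, which is finitely generated because it carries a $c\mathcal{Z}$-structure.) Next I would apply Theorem \ref{Theorem: Generalized cZ-boundary swapping} to $H$'s $c\mathcal{Z}$-structure $(\overline{X},Z,d)$ together with the model geometry $(Y,d_Y)$: its hypotheses are met, and the conclusion furnishes a model $\mathcal{Z}$-geometry of the form $(\overline{Y},Z,d_Y)$ --- a controlled $\mathcal{Z}$-compactification $\overline{Y}=Y\sqcup Z$ of the \emph{same} geometry $(Y,d_Y)$ --- underlying a $c\mathcal{Z}$-structure on $G$.

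Finally I would combine the model $\mathcal{Z}$-geometry $(\overline{Y},Z,d_Y)$ just obtained with the geometric $G$-action on $(Y,d_Y)$ from the first step; by Definition \ref{Defn. Z-structure} this is precisely a $\mathcal{Z}$-structure on $G$, with $\mathcal{Z}$-boundary $Z$. The only step that is not pure bookkeeping is the metrization above --- upgrading ``$Y$ is an AR with a proper cocompact $G$-action'' to ``$(Y,d_Y)$ is a model geometry with an isometric $G$-action'' --- which is needed because Theorem \ref{Theorem: Generalized cZ-boundary swapping} is phrased in terms of model geometries; I expect this to be handled exactly as in the earlier, non-quasi-isometry-invariant version of the corollary. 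Everything else reduces to matching the hypotheses of the generalized boundary-swapping theorem and quoting Definition \ref{Defn. Z-structure}.
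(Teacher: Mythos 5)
Your proposal is correct and is essentially the argument the paper intends (the corollary is stated without proof, but its placement immediately after Theorem \ref{Theorem: Generalized cZ-boundary swapping} signals exactly this route): metrize the AR from the Type $F_{AR}^{\ast}$ hypothesis so that the $G$-action is geometric, swap the boundary $Z$ onto that model geometry via the generalized boundary-swapping theorem, and read off a $\mathcal{Z}$-structure from Definition \ref{Defn. Z-structure}. You correctly identify the only substantive step as producing a $G$-invariant proper compatible metric on the AR, which is the same standard metrization used implicitly in the earlier non-quasi-isometric version of the corollary.
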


\begin{example}
If $G$ contains a finite index subgroup $H$ which is CAT(0), i.e., $G$ is
\emph{virtually }CAT(0), it is unclear whether $G$ is also CAT(0) (or even
Type F$^{\text{*}}$). As such, generalized boundary swapping (\cite{GuMo19})
does not guarantee that $G$ admits a $\mathcal{Z}$-structure. However, since
$H\hookrightarrow G$ is a quasi-isometry, Theorem
\ref{Theorem: Generalized cZ-boundary swapping} shows that $G$ admits a
$c\mathcal{Z}$-structure. By similar reasoning, virtual Baumslag-Solitar
groups and virtual systolic groups admit $c\mathcal{Z}$-structures. By
combining this strategy with \cite{Pie18} in an inductive manner, we can
deduce that every poly-(finite or cyclic) group admits a $c\mathcal{Z}%
$-structure. By Corollary \ref{Corollary: upgrading to a Z-structure}, the
moment any of these groups can be shown to act properly and cocompactly on an
AR, the $c\mathcal{Z}$-structure can be upgraded to a $\mathcal{Z}$-structure.
\end{example}

\begin{theorem}
[Uniqueness of coarse-$\mathcal{Z}$-boundaries up to shape]If $Z_{1}$ and
$Z_{2}$ are c$\mathcal{Z}$-boundaries for quasi-isometric groups $G$ and $H$,
then $Z_{1}$ is shape equivalent to $Z_{2}$.
\end{theorem}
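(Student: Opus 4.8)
The strategy is to reduce the statement to the analog for a single group via boundary swapping, and then invoke the already-established shape-uniqueness theorem for a fixed group. First I would use the hypothesis that $G$ and $H$ are quasi-isometric: by Theorem~\ref{Theorem: Generalized Svarc-Milnor with converse}, the word-metric on $G$ is coarsely equivalent to that on $H$. Now $Z_1$ is a c$\mathcal{Z}$-boundary for $G$, arising from a model $\mathcal{Z}$-geometry $(\overline{X_1},Z_1,d_1)$ together with a proper, cobounded, coarse near-action of $G$ on $X_1$, and $Z_2$ is a c$\mathcal{Z}$-boundary for $H$, arising from $(\overline{X_2},Z_2,d_2)$ with a proper, cobounded, coarse near-action of $H$ on $X_2$.

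The key step is to realize $Z_2$ as a c$\mathcal{Z}$-boundary for $G$ as well. Since $G$ is quasi-isometric to $H$, Corollary~\ref{Corollary: cZ-boundaries are q-i invariants} (equivalently, Theorem~\ref{Theorem: Generalized cZ-boundary swapping} applied with the model geometry $X_2$, on which $H$ acts and hence — after conjugating by a coarse equivalence $G\to H$ supplied by Theorem~\ref{Theorem: Generalized Svarc-Milnor with converse} — $G$ acts by a proper, cobounded coarse near-action) gives that $Z_2$ is a c$\mathcal{Z}$-boundary for $G$. So now both $Z_1$ and $Z_2$ are c$\mathcal{Z}$-boundaries for the \emph{single} group $G$.

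At this point I would invoke the preceding ``Shape uniqueness of c$\mathcal{Z}$-boundaries'' theorem (the single-group version stated just above in the excerpt): if $Z_1$ and $Z_2$ are both c$\mathcal{Z}$-boundaries for $G$, then $Z_1$ is shape equivalent to $Z_2$. Chaining the two facts — $Z_2$ is a c$\mathcal{Z}$-boundary for $G$, and c$\mathcal{Z}$-boundaries of a fixed group are shape equivalent — yields that $Z_1$ is shape equivalent to $Z_2$, which is exactly the claim.

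I expect the main (only real) obstacle to be bookkeeping in the boundary-swapping step: one must check that conjugating the $H$-action on $X_2$ through a coarse equivalence and coarse inverse between $G$ and $H$ produces a genuine proper, cobounded coarse near-action of $G$ on $X_2$ — but this is precisely what the converse direction of Theorem~\ref{Theorem: Generalized Svarc-Milnor with converse} (via the proposition deriving a coarse near-action from a coarse equivalence with a countable group) guarantees, and the hypotheses there are met since $G$ and $H$ are finitely generated and $X_2$ is coarsely equivalent to $H$ hence to $G$. Everything else is a direct citation of results already proved in the excerpt, so the proof is short.
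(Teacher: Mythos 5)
Your reduction step is essentially the paper's first move: use Theorem~\ref{Theorem: Generalized Svarc-Milnor with converse} together with boundary swapping (Theorem~\ref{Theorem: Generalized cZ-boundary swapping}) to realize $Z_{2}$ as a c$\mathcal{Z}$-boundary for $G$ as well. The problem is your final step. The single-group ``Shape uniqueness of c$\mathcal{Z}$-boundaries'' theorem that you invoke to finish is, in this paper, stated \emph{without} an independent proof; the paper explicitly says that it (along with the other single-group statements at the start of the section) ``is implied by the following collection of more general theorems,'' i.e.\ by the very two-group theorem you are trying to prove. So as written your argument is circular within the paper's logical architecture: you have reduced the theorem to an unproved special case of itself, and the actual shape-theoretic content never gets engaged.

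What is missing is the substance of that content. After swapping, you have $Z_{1}$ and $Z_{2}$ as c$\mathcal{Z}$-boundaries of $G$ over possibly \emph{different} model geometries $X_{1}$ and $X_{2}$, and nothing yet relates their shapes. The paper instead swaps $Z_{2}$ all the way onto the \emph{same} underlying space: since $G$ is coarsely equivalent to $(X,d_{X})$ (the space carrying $Z_{1}$), so is $H$, hence $H$ admits a proper cobounded coarse near-action on $X$, and Theorem~\ref{Theorem: Generalized cZ-boundary swapping} produces a second controlled $\mathcal{Z}$-compactification $\overline{X}^{\prime}=X\sqcup Z_{2}$ of the \emph{same} model geometry $(X,d_{X})$. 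Only at that point does the real argument begin: one must show that two controlled $\mathcal{Z}$-compactifications of a single model geometry have shape-equivalent boundaries, which is the argument of \cite[\S 8]{GuMo19} (building compatible inverse sequences of neighborhoods of the two boundaries using the control condition (\ddag)). Your proposal never reaches, or cites, this step. To repair it, either carry out the second swap onto $X$ and then invoke \cite[\S 8]{GuMo19}, or first supply an independent proof of the single-group uniqueness theorem (which would itself require exactly these two ingredients).
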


\begin{proof}
Let $\left(  \overline{X},Z_{1},d_{X}\right)  $ and $\left(  \overline
{Y},Z_{2},d_{Y}\right)  $ be c$\mathcal{Z}$-structures for $G$ and $H$. By
Theorem \ref{Theorem: Generalized Svarc-Milnor with converse}, $G$ is coarsely
equivalent to $\left(  X,d_{X}\right)  $ so, by hypothesis, $H$ is coarsely
equivalent to $\left(  X,d_{X}\right)  $. Theorem
\ref{Theorem: Generalized Svarc-Milnor with converse} provides a proper
cobounded coarse near-action of $H$ on $X$ so, by Theorem
\ref{Theorem: Generalized cZ-boundary swapping}, there is a model
$\mathcal{Z}$-geometry (underlying a c$\mathcal{Z}$-structure on $H$) of the
form $\left(  \overline{X}^{\prime},Z_{2},d_{X}\right)  $, where $\overline
{X}^{\prime}=X\sqcup Z_{2}$ is a second controlled $\mathcal{Z}$%
-compactification of $X$. From here, the argument provided in \cite[\S 8]%
{GuMo19} goes through unchanged.
\end{proof}

We close this section by placing Theorem
\ref{Theorem: Finite-dimensionality of Z-boundaries} into a group-theoretic
context, where it generalizes results from \cite{Gro87}, \cite{Swe99}, and
\cite{Mor16a}.

\begin{theorem}
Every c$\mathcal{Z}$-boundary of a group $G$ has finite Lebesgue covering dimension.
\end{theorem}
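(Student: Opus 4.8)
The plan is to reduce the statement to Theorem~\ref{Theorem: Finite-dimensionality of Z-boundaries}, which already asserts that the $\mathcal{Z}$-boundary of any model $\mathcal{Z}$-geometry has finite Lebesgue covering dimension. So the only work is to observe that a c$\mathcal{Z}$-structure on $G$ supplies such a model $\mathcal{Z}$-geometry. First I would unpack Definition~\ref{Defn. coarse-Z-structure}: a c$\mathcal{Z}$-boundary $Z$ of $G$ comes, by definition, together with a model $\mathcal{Z}$-geometry $(\overline{X},Z,d)$ and a proper, cobounded coarse near-action of $G$ on the underlying model geometry $(X,d)$. In particular $(X,d)$ is a proper, cocompact metric AR. Then Theorem~\ref{Theorem: Finite-dimensionality of Z-boundaries} applies verbatim to $(\overline{X},Z,d)$ and yields $\dim Z<\infty$; indeed $\dim Z\le n-1$ where $n$ is the macroscopic dimension of $X$ (which is finite by Lemma~\ref{Lemma: finite macroscopic dimension and refinements}, since $X$ is proper and cocompact).

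The one genuinely substantive point worth spelling out is that the bound can be phrased group-theoretically, making this a statement about $G$ rather than about the chosen model. By Proposition~\ref{Prop: Coarse Svarc-Milnor with finitely generated conclusion} (or Theorem~\ref{Theorem: Generalized Svarc-Milnor with converse}), the coarse near-action gives a coarse equivalence $\gamma\mapsto\psi(\gamma)(x_0)$ between $G$ (with a word metric) and $(X,d)$. Thus the macroscopic dimension of $X$ equals the macroscopic (asymptotic) dimension of $G$, so the inequality reads $\dim Z\le \operatorname{asdim}G-1$ whenever $G$ has finite asymptotic dimension; in general one simply records that $X$ is cocompact and invokes Lemma~\ref{Lemma: finite macroscopic dimension and refinements} directly to conclude the relevant covering exists.

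I do not anticipate a real obstacle here: the definitions were arranged precisely so that a c$\mathcal{Z}$-structure carries an honest model $\mathcal{Z}$-geometry, and all the topological content is already isolated in Theorem~\ref{Theorem: Finite-dimensionality of Z-boundaries} and Lemmas~\ref{Lemma: finite macroscopic dimension and refinements} and~\ref{Lemma: bounded homotopy}. If anything, the care needed is purely bookkeeping: making sure that the finiteness of macroscopic dimension of $X$ is deduced from cocompactness of $X$ (Lemma~\ref{Lemma: finite macroscopic dimension and refinements}) rather than from any group action, since a model geometry is not assumed to admit one. So the proof is essentially one line: \emph{apply Theorem~\ref{Theorem: Finite-dimensionality of Z-boundaries} to the model $\mathcal{Z}$-geometry underlying the given c$\mathcal{Z}$-structure.}
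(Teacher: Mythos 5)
Your proposal is correct and is exactly the paper's (implicit) argument: the paper offers no separate proof because, as you say, a c$\mathcal{Z}$-structure carries a model $\mathcal{Z}$-geometry by definition, and Theorem~\ref{Theorem: Finite-dimensionality of Z-boundaries} applies verbatim. One caution about your optional aside: the paper's notion of macroscopic dimension is not a coarse invariant (e.g.\ a discrete group with a word metric is covered by singletons, so it always has macroscopic dimension $0$, unlike a cocompact AR model for it), so the claimed identification with $\operatorname{asdim}G$ and the bound $\dim Z\le\operatorname{asdim}G-1$ are not justified — but you correctly note this is not needed, since finiteness of the macroscopic dimension of $X$ follows directly from Lemma~\ref{Lemma: finite macroscopic dimension and refinements}.
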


\section{Further applications of c$\mathcal{Z}$%
-structures\label{Section: Applications of cZ-structures}}

In this section, we show that many of the theorems from \cite{BeMe91},
\cite{Bes96}, \cite{Ont05}, \cite{Dra06}, and \cite{GeOn07} remain valid in
the more general context of coarse $\mathcal{Z}$-structures. We begin with
some definitions, notation, and basic facts needed to describe these results.

Aside from some appeals to \emph{group cohomology} (see \cite{Bro82}) and
\emph{coarse cohomology} (see \cite{Roe03}), all cohomology used here is
\v{C}ech-Alexander-Spanier cohomology (see \cite{Mas78}) with coefficients in
a PID $R$. This is a compactly supported cohomology theory which agrees with
classical \v{C}ech cohomology on compact metric spaces. As such, it agrees
with singular cohomology on compact ANRs. For locally compact ANRs it is
isomorphic to singular cohomology with compact supports. For that reason, we
will use the notation $\check{H}^{\ast}\!\left(  \ ;R\right)  $ when applying
this functor to compact metric spaces and $\check{H}_{c}^{\ast}\!\left(
\ ;R\right)  $ for spaces that are not necessarily compact. Since many of the
spaces of interest are non-ANRs, the choice of cohomology theories is crucial.
For relative cohomology, this theory requires subspaces to be closed. A useful
property is that, for every compact metric pair $\left(  Y,A\right)  $,
$\check{H}^{\ast}\!\left(  Y,A;R\right)  \cong\check{H}_{c}^{\ast}\!\left(
Y-A;R\right)  $. More generally, if $\left( Y,A\right)  $ is a closed pair of
locally compact metric spaces, $\check{H}_{c}^{\ast}\!\left(  Y,A;R\right)
\cong\check{H}_{c}^{\ast}\!\left(  Y-A;R\right)  $. As usual, a tilde
indicates reduced cohomology.

\begin{lemma}
\label{Lemma: Cech cohomology vs compact supports}For any model $\mathcal{Z}%
$-geometry $\left(  \overline{X},Z,d\right)  $, $\widetilde{\check{H}}%
^{n}\left(  Z;R\right)  \cong\check{H}_{c}^{n+1}\left(  X; R\right)  $ for all
$n$.
\end{lemma}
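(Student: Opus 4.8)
The plan is to deduce the isomorphism from the long exact sequence of the pair $(\overline{X},Z)$, using the fact that $\overline{X}$ is contractible. First I would recall that, since $(X,d)$ is a model geometry, $X$ is a metric AR; and since $\overline{X}=X\sqcup Z$ is a $\mathcal{Z}$-compactification of $X$, the facts collected in \S3 of \cite{GuMo19} --- in particular, that a $\mathcal{Z}$-compactification of an AR is again an AR --- show that $\overline{X}$ is a compact AR. Being an AR, $\overline{X}$ is contractible, so $\widetilde{\check{H}}^{\,k}(\overline{X};R)=0$ for every $k$. Moreover, a $\mathcal{Z}$-set is closed, so $(\overline{X},Z)$ is a closed pair of compact metric spaces --- exactly the situation in which the relative \v{C}ech--Alexander--Spanier theory behaves well.

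Next I would write down the relevant portion of the long exact sequence of the pair $(\overline{X},Z)$ in reduced cohomology:
\[
\widetilde{\check{H}}^{\,n}(\overline{X};R)\longrightarrow\widetilde{\check{H}}^{\,n}(Z;R)\longrightarrow\check{H}^{\,n+1}(\overline{X},Z;R)\longrightarrow\widetilde{\check{H}}^{\,n+1}(\overline{X};R).
\]
Since both outer terms vanish, the connecting homomorphism is an isomorphism $\widetilde{\check{H}}^{\,n}(Z;R)\cong\check{H}^{\,n+1}(\overline{X},Z;R)$ for every $n$. Finally I would invoke the property of this cohomology theory quoted above --- that $\check{H}^{\ast}(Y,A;R)\cong\check{H}_{c}^{\ast}(Y-A;R)$ for a compact metric pair $(Y,A)$ --- applied with $Y=\overline{X}$ and $A=Z$, to get $\check{H}^{\,n+1}(\overline{X},Z;R)\cong\check{H}_{c}^{\,n+1}(X;R)$. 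Composing the two isomorphisms yields $\widetilde{\check{H}}^{\,n}(Z;R)\cong\check{H}_{c}^{\,n+1}(X;R)$, as claimed.

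There is no real obstacle here; the argument is purely formal once the inputs are assembled. The only points needing a moment's care are: that $\overline{X}$ genuinely is an AR (this uses $X$ being an AR, not merely an ANR, together with the cited stability of $\mathcal{Z}$-compactifications); that $Z$ is closed in $\overline{X}$, which is automatic and is needed both for the relative exact sequence and for the complement formula; and the validity of the reduced long exact sequence of the pair, which holds whenever $Z\neq\emptyset$. The degenerate case $Z=\emptyset$ forces $X$ to be compact and can be verified directly, with the usual conventions (for a nonempty compact connected AR $X$ one has $\widetilde{\check{H}}^{-1}(\emptyset;R)\cong R\cong\check{H}_{c}^{0}(X;R)$, and both sides vanish in all other degrees).
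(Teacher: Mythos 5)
Your proof is correct and follows the same route as the paper's: observe that $\overline{X}$ is an AR (hence contractible) because it is a $\mathcal{Z}$-compactification of the AR $X$, then combine the long exact sequence of the pair $\left(\overline{X},Z\right)$ with the identification $\check{H}^{\ast}\!\left(\overline{X},Z;R\right)\cong\check{H}_{c}^{\ast}\!\left(X;R\right)$. Your write-up simply makes explicit the details (closedness of $Z$, the degenerate case $Z=\emptyset$) that the paper leaves to the reader.
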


\begin{proof}
Since $X=\overline{X}-Z$ is an AR, then so is $\overline{X}$ and hence $\overline{X}$ is contractible.  Then the isomorphism follows from the
above remarks and the exact sequence for the pair $\left(  \overline
{X},Z\right)  $.
\end{proof}

For a locally compact metric space $Y$ the \emph{cohomological dimension }with
respect to $R$ is defined by%
\begin{align*}
\dim_{R}X &  =\max\left\{  n\mid\check{H}_{c}^{n}\left(  U;R\right)
\neq0\text{ for some }U\subseteq_{open}Y\right\}  \\
&  =\max\left\{  n\mid\check{H}_{c}^{n}\left(  Y,A;R\right)  \neq0\text{ for
some }A\subseteq_{closed}Y\right\}
\end{align*}
We let $\dim Y$ denote Lebesgue covering dimension. It is a classical fact
that $\dim_{%
%TCIMACRO{\U{2124} }%
%BeginExpansion
\mathbb{Z}
%EndExpansion
}Y=\dim Y$ whenever the latter is finite \cite{Wal81}. The \emph{global
cohomological dimension }with respect to $R$ of a space $X$ is defined by%
\[
\gcd\nolimits_{R}Y=\max\left\{  n\mid\check{H}_{c}^{n}\left(  Y;R\right)
\neq0\right\}
\]

\subsection{The group cohomology theorem for c$\mathcal{Z}$-boundaries}

One of Bestvina and Mess's initial applications of $\mathcal{Z}$-set
technology \cite{BeMe91} was to reveal a connection between the group
cohomology of a torsion-free hyperbolic group $G$ and topological properties
of its Gromov boundary $\partial G$. A particularly striking assertion is
that
\[
H^{n+1}\left(  G,RG\right)  \cong\widetilde{\check{H}}^{n}\left(  \partial
G;R\right)
\]
for all integers $n$ and PID $R$. In \cite{Bes96}, Bestvina applied the same
reasoning to extend this result to all torsion-free groups admitting a
$\mathcal{Z}$-structure; here the $\mathcal{Z}$-boundary plays the role of
$\partial G$. The following result extends that theorem to c$\mathcal{Z}%
$-boundaries and also allows for groups with torsion. The proof relies on
coarse cohomology of metric spaces as developed by Roe in \cite{Roe03}.

\begin{theorem}
If $G$ admits a c$\mathcal{Z}$-structure based on a model $\mathcal{Z}%
$-geometry $\left(  \overline{X},Z,d\right)  $, then $H^{n+1}\left(
G,RG\right)  \cong\widetilde{\check{H}}^{n}\left(  Z;R\right)  $ for all
integers $n$ and coefficients $R$.
\end{theorem}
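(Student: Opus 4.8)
The plan is to reduce the statement, in two moves, to a known identification: first from the group $G$ to the model geometry $X$, and then from $X$ to its $\mathcal{Z}$-boundary $Z$. The second move is already available: Lemma~\ref{Lemma: Cech cohomology vs compact supports} gives $\widetilde{\check H}^{n}(Z;R)\cong \check H_c^{n+1}(X;R)$ for all $n$. So the real content is to establish
\[
H^{n+1}(G,RG)\cong \check H_c^{n+1}(X;R)
\]
whenever $G$ admits a proper, cobounded coarse near-action on the model geometry $X$ (a proper, cocompact metric AR). This is the statement I would isolate as the key lemma and prove first.

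To prove that key lemma, I would use Roe's coarse cohomology $HX^{*}(\,\cdot\,;R)$ as the intermediary. The strategy has three ingredients. (1) \emph{Coarse-homotopy / quasi-isometry invariance of coarse cohomology.} By Theorem~\ref{Theorem: Generalized Svarc-Milnor with converse}, the orbit map $\gamma\mapsto\psi(\gamma)(x_0)$ is a coarse equivalence $G\to X$ (here $G$ carries a word metric), so $HX^{*}(G;R)\cong HX^{*}(X;R)$; this is a standard functoriality property of coarse cohomology under coarse equivalences. (2) \emph{Roe's comparison for "nice" spaces.} For a proper metric space that is uniformly contractible and of finite macroscopic (asymptotic) dimension, Roe proves a character map $HX^{*}(X;R)\to \check H_c^{*}(X;R)$ which is an isomorphism. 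Lemma~\ref{Lemma: uniform contractibility} (via cocompactness and contractibility of the AR) and Lemma~\ref{Lemma: finite macroscopic dimension and refinements} supply exactly these hypotheses for $X$; more precisely I would run the argument through the uniformly bounded covers $\mathcal{U}_0\succ\mathcal{U}_1\succ\cdots$ with $\mathrm{mesh}\to 0$ and the barycentric maps $\psi_i:X\to K_i$, using Lemma~\ref{Lemma: bounded homotopy} to compare $X$ with the nerves $K_i$ up to bounded homotopy, so that $\check H_c^{*}(X;R)=\varinjlim \check H_c^{*}(K_i;R)$ and this colimit is computed by the coarse complex. (3) \emph{Identification of coarse cohomology of $G$ with group cohomology.} It is a theorem of Roe (building on work relating coarse cohomology of a group with $RG$-coefficient group cohomology) that for a group $G$ of type $F_\infty$ — and here, since $G$ acts properly and coboundedly, $G$ is actually of type $F_\infty$ by cocompactness of $X$ — one has $HX^{*}(G;R)\cong H^{*}(G;RG)$. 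Splicing (1), (2), (3) together gives $H^{*}(G;RG)\cong \check H_c^{*}(X;R)$, and then Lemma~\ref{Lemma: Cech cohomology vs compact supports} finishes the theorem with the index shift $H^{n+1}(G,RG)\cong \widetilde{\check H}^{n}(Z;R)$.

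The main obstacle is step (2): justifying that Roe's character map is an isomorphism without assuming $X$ is uniformly contractible \emph{and} has the bounded-geometry/finite-asymptotic-dimension hypotheses in the exact form Roe uses. Our $X$ need not be a simplicial complex or have bounded geometry in the metric sense; it is only a proper cocompact metric AR. So I would replace $X$ by the mapping-telescope of the nerves $K_i$ of the covers from Lemma~\ref{Lemma: finite macroscopic dimension and refinements} (these are finite-dimensional, uniformly bounded, finite-order covers), check that the natural maps $X\to K_i$ assemble into a coarse equivalence from $X$ to that telescope (using Lemma~\ref{Lemma: bounded homotopy} to control the homotopies), and then run Roe's comparison on the telescope, where the complexes are locally finite simplicial and the hypotheses are cleanly verified. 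The one point requiring care is ensuring that "bounded homotopy" in Lemma~\ref{Lemma: bounded homotopy} is strong enough to give a \emph{coarse} homotopy in Roe's sense, i.e.\ that the tracks of $H$ are uniformly bounded independent of the point — but that is exactly what "bounded homotopy" asserts, so this should go through. Apart from that, each step is an application of an established theorem (Roe's coarse-cohomology machinery in \cite{Roe03}, together with Theorem~\ref{Theorem: Generalized Svarc-Milnor with converse} and the three lemmas of Section~\ref{Section: Model geometries and model Z-geometries}), and the proof should be short modulo citations.
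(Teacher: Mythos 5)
Your proposal is correct and follows essentially the same route as the paper: identify $H^{*}(G;RG)$ with Roe's coarse cohomology $HX^{*}(|G|;R)$ (Example 5.21 of \cite{Roe03}), transfer along the coarse equivalence $G\to X$ supplied by Theorem \ref{Theorem: Generalized Svarc-Milnor with converse}, apply the character-map isomorphism $HX^{*}(X;R)\cong\check H_c^{*}(X;R)$ (Theorem 5.28 of \cite{Roe03}, whose hypotheses are supplied by Lemmas \ref{Lemma: uniform contractibility}--\ref{Lemma: bounded homotopy}), and finish with Lemma \ref{Lemma: Cech cohomology vs compact supports}. The extra care you take in step (2) is reasonable but not a departure from the paper's argument.
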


\begin{proof}
To avoid confusion, we let $G$ denote the group and $\left\vert G\right\vert $
the corresponding metric space. By \cite[Example 5.21]{Roe03}, $HX^{\ast
}\left(  \left\vert G\right\vert ;R\right)  \cong H^{\ast}\left(  G;RG\right)
$ and, since $\left\vert G\right\vert $ is coarsely equivalent to $\left(
X,d\right)  $, $HX^{\ast}\left(  \left\vert G\right\vert ;R\right)  \cong
HX^{\ast}\left(  X;R\right)  $. By \cite[Theorem 5.28]{Roe03}, $HX^{\ast}\left(
X;R\right)  \cong\check{H}_{c}^{\ast}\left(  X;R\right)  ,$ so an application
of Lemma \ref{Lemma: Cech cohomology vs compact supports} completes the proof.
\end{proof}

\subsection{The Bestvina-Mess formula for c$\mathcal{Z}$-boundaries and model
$\mathcal{Z}$-geometries}

Another key insight from \cite{BeMe91} is that, for a torsion-free hyperbolic
group $G$, $\partial G$ satisfies:%
\begin{equation}
\dim_{R}\partial G=\gcd\nolimits_{R}\partial G\label{Key identity}%
\end{equation}
and since $\partial G$ is always finite-dimensional:%
\begin{equation}
\dim\partial G=\gcd\nolimits_{%
%TCIMACRO{\U{2124} }%
%BeginExpansion
\mathbb{Z}
%EndExpansion
}\partial G\label{Second key identity}%
\end{equation}

\noindent In \cite{Bes96}, Bestvina extended these observation to
$\mathcal{Z}$-boundaries of torsion-free groups admitting a slightly more
restrictive version of $\mathcal{Z}$-structure. Theorem
\ref{Theorem: Finite-dimensionality of Z-boundaries} makes that restriction
unnecessary. Later, Dranishnikov \cite{Dra06} expanded the notion of
$\mathcal{Z}$-structure to allow for groups with torsion and showed that the
analog of (\ref{Key identity}) still holds. Another application of  Theorem
\ref{Theorem: Finite-dimensionality of Z-boundaries} implies
(\ref{Second key identity}). Independently, Geoghegan and Ontaneda
\cite{GeOn07} verified (\ref{Key identity}) and (\ref{Second key identity})
for CAT(0) groups, with the visual boundary standing in for $\partial G$. Here
we extend these results still further.

\begin{theorem}
\label{Theorem: Dranishnikov for cZ-structures}Let $(\overline{X},Z,d)$ be a
c$\mathcal{Z}$-structure on a group $G$. Then for every PID $R$, $\dim
_{R}Z=\gcd_{R}Z$ and, since $Z$ is finite-dimensional, $\dim Z=\gcd_{%
%TCIMACRO{\U{2124} }%
%BeginExpansion
\mathbb{Z}
%EndExpansion
}Z$.
\end{theorem}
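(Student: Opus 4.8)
The plan is to reduce the theorem to the known identities for classical $\mathcal{Z}$-boundaries (Bestvina–Dranishnikov) by means of our boundary-swapping machinery, together with the finite-dimensionality result Theorem~\ref{Theorem: Finite-dimensionality of Z-boundaries}. Since the statement $\dim_R Z = \gcd_R Z$ is a statement purely about the space $Z$ (and $R$), and since the second assertion $\dim Z = \gcd_{\mathbb{Z}} Z$ follows from the first (with $R = \mathbb{Z}$) the moment we know $Z$ is finite-dimensional --- which is exactly Theorem~\ref{Theorem: Finite-dimensionality of Z-boundaries}, restated group-theoretically in the preceding section --- the entire content is the first identity. The obvious strategy would be to mimic Dranishnikov's proof, but the cleaner route is: if $G$ were of type $F_{AR}^{\ast}$ we could replace the c$\mathcal{Z}$-structure by an honest $\mathcal{Z}$-structure via Corollary~\ref{Corollary: upgrading to a Z-structure} and quote \cite{Dra06} directly. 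In general $G$ need not be type $F_{AR}^{\ast}$, so I would instead argue at the level of the model $\mathcal{Z}$-geometry itself, not using any group action.

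\textbf{Key steps.} First I would isolate the following purely topological statement: for any model $\mathcal{Z}$-geometry $(\overline{X}, Z, d)$ --- with no group action assumed --- one has $\dim_R Z = \gcd_R Z$ for every PID $R$. This is the correct level of generality, parallel to how Theorem~\ref{Theorem: Finite-dimensionality of Z-boundaries} is stated. The theorem then follows immediately, since a c$\mathcal{Z}$-structure on $G$ is by definition built on such a model $\mathcal{Z}$-geometry. Second, to prove this topological statement I would follow Dranishnikov's and Bestvina–Mess's argument, translating each ingredient into the model-$\mathcal{Z}$-geometry setting: (a) use Lemma~\ref{Lemma: Cech cohomology vs compact supports} to pass between $\widetilde{\check H}^{n}(Z;R)$ and $\check H_c^{n+1}(X;R)$; (b) the inequality $\gcd_R Z \le \dim_R Z$ is automatic from the definitions; (c) for the reverse inequality $\dim_R Z \le \gcd_R Z$, the heart of the matter is a partition-of-unity / Mayer–Vietoris argument on $\overline{X}$ using the controlled $\mathcal{Z}$-compactification condition $(\ddag)$. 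Specifically, given an open subset $U \subseteq Z$ with $\check H^k(\overline U', A; R) \ne 0$ (realizing $\dim_R Z$), condition $(\ddag')$ lets one ``thicken'' $U$ into $X$ so that a nontrivial class is detected by $\check H_c^{k+1}$ of a neighborhood in $X$; finite macroscopic dimension (Lemma~\ref{Lemma: finite macroscopic dimension and refinements}) and uniform contractibility (Lemma~\ref{Lemma: uniform contractibility}) control the cohomology of $X$ in the way Bestvina–Mess require, yielding the global class in $\check H_c^{k+1}(X;R) \cong \widetilde{\check H}^k(Z;R)$, hence $k \le \gcd_R Z$. Third, for the last sentence: $Z$ is finite-dimensional by Theorem~\ref{Theorem: Finite-dimensionality of Z-boundaries}, so $\dim_{\mathbb{Z}} Z = \dim Z$ by the classical fact \cite{Wal81} cited above, and applying the first identity with $R = \mathbb{Z}$ gives $\dim Z = \dim_{\mathbb{Z}} Z = \gcd_{\mathbb{Z}} Z$.

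\textbf{Main obstacle.} The delicate step is (c): verifying that Dranishnikov's argument, which was written for an $X$ carrying a geometric (or near-geometric) group action, goes through using only that $X$ is a proper, cocompact, uniformly contractible metric AR with a controlled $\mathcal{Z}$-compactification. The cocompactness and the resulting properties collected in Section~\ref{Section: Model geometries and model Z-geometries} (uniform contractibility, finite macroscopic dimension with a refining sequence of uniformly bounded covers, bounded homotopies via Lemma~\ref{Lemma: bounded homotopy}) are precisely what replace the group action in Bestvina–Mess's and Dranishnikov's proofs --- one works with the isometry pseudogroup and cocompactness in place of a transitive-up-to-bounded-error group action. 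I expect this to require careful bookkeeping but no genuinely new idea: the references \cite{BeMe91}, \cite{Dra06} use the action only to obtain exactly these uniformity properties of $X$. Once the topological statement is proved at the level of model $\mathcal{Z}$-geometries, the group-theoretic theorem is a one-line consequence of Definition~\ref{Defn. coarse-Z-structure}.
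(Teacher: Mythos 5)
Your proposal matches the paper's approach: the paper likewise reduces the theorem to the purely topological statement that $\dim_R Z=\gcd_R Z$ for any model $\mathcal{Z}$-geometry (its Theorem \ref{Theorem: Dranishnikov's theorem for model spaces}), and proves that by rerunning Dranishnikov's argument with the group action replaced by the cocompactness-derived uniformity properties of Lemmas \ref{Lemma: uniform contractibility}--\ref{Lemma: bounded homotopy}, the key technical input being the generalized Lemma \ref{Lemma: Primary Lemma} together with Lemma \ref{Lemma: Cech cohomology vs compact supports}. Your sketch of the "main obstacle" is exactly the paper's own assessment, so no further comparison is needed.
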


\begin{corollary}
\label{Corollary: quasi-isometry invariants}For groups admitting a
c$\mathcal{Z}$-structure, the dimension of the boundary and the cohomological
dimension of that boundary over each PID $R$ are quasi-isometry invariants.
\end{corollary}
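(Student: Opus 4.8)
The plan is to deduce this immediately from the two preceding results: Theorem~\ref{Theorem: Dranishnikov for cZ-structures}, which identifies $\dim_R Z$ with $\gcd_R Z$ (and $\dim Z$ with $\gcd_{\mathbb{Z}} Z$), and Corollary~\ref{Corollary: cZ-boundaries are q-i invariants}, which says that quasi-isometric groups admit exactly the same c$\mathcal{Z}$-boundaries. So let $G$ and $H$ be quasi-isometric, and suppose $G$ admits a c$\mathcal{Z}$-structure with boundary $Z$. First I would invoke Corollary~\ref{Corollary: cZ-boundaries are q-i invariants} to conclude that $Z$ is also a c$\mathcal{Z}$-boundary for $H$. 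Then, since the quantities $\dim Z$, $\dim_R Z$, and $\gcd_R Z$ depend only on the topological space $Z$ (indeed on its \v{C}ech-Alexander-Spanier cohomology), they are manifestly the same whether we regard $Z$ as a boundary of $G$ or of $H$. That already gives invariance.

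One subtlety worth addressing explicitly is that a single group $G$ may admit several c$\mathcal{Z}$-boundaries $Z_1, Z_2$, so the statement "the dimension of the boundary is a quasi-isometry invariant" needs to be read as: the \emph{set} of values $\{\dim Z : Z \text{ a c}\mathcal{Z}\text{-boundary of }G\}$ depends only on the quasi-isometry class of $G$. This follows from the same two ingredients: Corollary~\ref{Corollary: cZ-boundaries are q-i invariants} shows that $G$ and $H$ have the same collection of c$\mathcal{Z}$-boundaries, hence the same collection of dimensions (and the same collection of cohomological dimensions over any fixed PID $R$). No separate argument is needed; the corollary is essentially a repackaging.

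I do not anticipate a genuine obstacle here — the work has all been done in Theorem~\ref{Theorem: Dranishnikov for cZ-structures} and Corollary~\ref{Corollary: cZ-boundaries are q-i invariants}. If anything, the only point requiring a word of care is the role of Theorem~\ref{Theorem: Dranishnikov for cZ-structures}: strictly speaking one does not even need the identity $\dim_R Z = \gcd_R Z$ to prove the corollary, since invariance already follows from the boundary-swapping result alone. The reason to cite Theorem~\ref{Theorem: Dranishnikov for cZ-structures} is that it is the natural home of these dimension quantities and makes clear that $\gcd_R Z$ — a priori a "global" invariant — is also captured; thus the corollary is most cleanly phrased as: both $\dim_R Z$ and $\gcd_R Z$ (which coincide) are quasi-isometry invariants. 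I would write the proof in two sentences: apply Corollary~\ref{Corollary: cZ-boundaries are q-i invariants}, then note that the invariants in question are intrinsic to the space $Z$.
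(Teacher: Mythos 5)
There is a genuine gap here, and it stems from how you resolve the non-uniqueness of the boundary. A single group (indeed a single quasi-isometry class) can admit many non-homeomorphic c$\mathcal{Z}$-boundaries --- see Example \ref{Example: n-dimensional geometries}, where quotienting $S^{n-1}$ by a non-cellular cell-like set yields a different boundary for the same underlying geometry. The intended content of the corollary, in line with the classical statement that ``$\dim Z$ is a group invariant,'' is that \emph{every} c$\mathcal{Z}$-boundary of \emph{every} group in a fixed quasi-isometry class has the same dimension and the same cohomological dimension over $R$; these numbers are therefore well-defined invariants of the class. Your reading --- that only the \emph{set} of values $\{\dim Z\}$ is preserved --- is a strictly weaker statement, and your argument (``these quantities depend only on the space $Z$'') proves only that weaker statement. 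It says nothing about why two non-homeomorphic boundaries $Z_1$ and $Z_2$ must satisfy $\dim Z_1 = \dim Z_2$.

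The step you explicitly discard is exactly the one that cannot be discarded. Different c$\mathcal{Z}$-boundaries of quasi-isometric groups are related only by shape equivalence (Theorem on uniqueness of c$\mathcal{Z}$-boundaries up to shape), and covering dimension is \emph{not} a shape invariant: an arc and a point are shape equivalent but have different dimensions. Likewise $\dim_R$, being defined via $\check{H}_c^n(U;R)$ for open subsets $U\subseteq Z$, is not manifestly a shape invariant. What \emph{is} a shape invariant is $\gcd_R Z=\max\{n\mid\check{H}^n(Z;R)\neq 0\}$; concretely, Lemma \ref{Lemma: Cech cohomology vs compact supports} gives $\widetilde{\check{H}}^{n}(Z;R)\cong\check{H}_c^{n+1}(X;R)$, and $X$ is determined up to proper homotopy by the quasi-isometry class of $G$ (Theorem \ref{Theorem: Generalized coarse uniqueness of geometric models}), so $\gcd_R Z$ is constant across the class. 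Theorem \ref{Theorem: Dranishnikov for cZ-structures} is then essential to convert this into the desired conclusion via $\dim_R Z=\gcd_R Z$ and $\dim Z=\gcd_{\mathbb{Z}}Z$. This is precisely the combination the paper cites (Theorem \ref{Theorem: Dranishnikov for cZ-structures}, Lemma \ref{Lemma: Cech cohomology vs compact supports}, and a uniqueness result from Section \ref{Section: Uniqueness and boundary swapping for cZ-structures}); your proposal replaces it with boundary swapping alone, which does not suffice.
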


\begin{remark}
The point of Corollary \ref{Corollary: quasi-isometry invariants} is that,
since a c$\mathcal{Z}$-structure on $G$ guarantees the existence of a
c$\mathcal{Z}$-structure on all groups quasi-isometric to $G$, these are
invariants of the entire quasi-isometry class of $G.$
\end{remark}

Corollary \ref{Corollary: quasi-isometry invariants} follows from a
combination of Theorem \ref{Theorem: Dranishnikov for cZ-structures}, Lemma
\ref{Lemma: Cech cohomology vs compact supports}, and any one of several
results from Section
\ref{Section: Uniqueness and boundary swapping for cZ-structures}. Theorem
\ref{Theorem: Dranishnikov for cZ-structures} is implied by the following more
general result that does not require a proper coarse near-action.

\begin{theorem}
\label{Theorem: Dranishnikov's theorem for model spaces}Let $(\overline
{X},Z,d)$ be a model $\mathcal{Z}$-geometry. Then for every PID $R$, $\dim
_{R}Z=\gcd_{R}Z$. Since $Z$ is finite-dimensional, we also have $\dim Z=\gcd_{%
%TCIMACRO{\U{2124} }%
%BeginExpansion
\mathbb{Z}
%EndExpansion
}Z$.
\end{theorem}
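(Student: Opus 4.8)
The plan is to follow the method of Bestvina--Mess \cite{BeMe91} and Dranishnikov \cite{Dra06}, with cocompactness of the underlying geometry $\left(X,d\right)$ playing the role of the geometric group action used in those papers. The inequality $\gcd\nolimits_{R}Z\le\dim\nolimits_{R}Z$ is immediate (take $A=\emptyset$ in the definition of $\dim\nolimits_{R}$), so the work lies in the reverse inequality. Write $d=\gcd\nolimits_{R}Z$. Using the identity $\check{H}^{k}(Z,A;R)\cong\check{H}_{c}^{k}(Z\setminus A;R)$ for closed $A\subseteq Z$, it suffices to prove that $\check{H}_{c}^{k}(V;R)=0$ for every open $V\subseteq Z$ and every $k>d$ (the case $V=\emptyset$ being trivial).

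So fix such a $V$ and $k$, and put $A=Z\setminus V$ (closed in $\overline{X}$, hence a $\mathcal{Z}$-set there) and $\Omega=X\cup V=\overline{X}\setminus A$ (open in $\overline{X}$). Since $\overline{X}$ is a compact AR it is contractible, so the long exact sequence of the compact pair $\left(\overline{X},A\right)$ yields $\check{H}_{c}^{k}(\Omega;R)\cong\widetilde{\check{H}}^{k-1}(A;R)$, while Lemma \ref{Lemma: Cech cohomology vs compact supports} yields $\check{H}_{c}^{k+1}(X;R)\cong\widetilde{\check{H}}^{k}(Z;R)$, which vanishes for $k>d$. Feeding these into the long exact sequence of the pair $\left(\Omega,V\right)$ (whose open complement is $X$) and chasing naturality of excision, one identifies $\check{H}_{c}^{k}(V;R)$ with the cokernel of the restriction map $\widetilde{\check{H}}^{k-1}(Z;R)\to\widetilde{\check{H}}^{k-1}(A;R)$. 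Thus the theorem reduces to the assertion: for every closed $A\subseteq Z$ and every $j\ge d$, the restriction $\widetilde{\check{H}}^{j}(Z;R)\to\widetilde{\check{H}}^{j}(A;R)$ is surjective --- equivalently, $\check{H}^{j}(A;R)=0$ for all closed $A$ and all $j>d$, together with surjectivity in the critical degree $j=d$.

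This surjectivity is the substantive point, and it is here that cocompactness of $X$ is genuinely needed --- finite-dimensionality of $Z$ alone does not suffice, as one sees from the open cone on the dunce hat: it is a $\mathcal{Z}$-compactification of an AR, but that AR is not cocompact and its boundary is $2$-dimensional with $\gcd\nolimits_{\mathbb{Z}}$ equal to $0$. I would obtain the surjectivity by adapting the argument of \cite{Dra06} (compare \cite{BeMe91}, \cite{GeOn07}): an induction grounded in the finite covering dimension of $Z$ (Theorem \ref{Theorem: Finite-dimensionality of Z-boundaries}), in which a closed set $A\subseteq Z$ is replaced by its ``shadow'' in $X$ --- essentially the union of all uniformly bounded subsets of $X$ lying inside arbitrarily small neighborhoods of $A$ in $\overline{X}$ --- and one uses the controlled condition $(\ddag)$, cocompactness, and uniform contractibility (Lemmas \ref{Lemma: uniform contractibility}, \ref{Lemma: bounded homotopy}, \ref{Lemma: finite macroscopic dimension and refinements}), together with Roe's coarse cohomology \cite{Roe03} (as in the proof of the group-cohomology theorem earlier in this section), to control $\widetilde{\check{H}}^{\ast}(A;R)$ relative to $\widetilde{\check{H}}^{\ast}(Z;R)\cong\check{H}_{c}^{\ast+1}(X;R)$ and to invoke the vanishing $\check{H}_{c}^{j}(X;R)=0$ for $j>d+1$. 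I expect the careful construction of the shadow and the verification that Dranishnikov's argument adapts --- in particular that the relevant auxiliary subspaces retain the properties of a controlled, uniformly contractible geometry --- to be the main obstacle; the remainder is diagram-chasing with long exact sequences.

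Finally, the clause $\dim Z=\gcd\nolimits_{\mathbb{Z}}Z$ is the case $R=\mathbb{Z}$ of the above combined with the classical equality $\dim\nolimits_{\mathbb{Z}}Z=\dim Z$ for finite-dimensional spaces \cite{Wal81}, which applies since $\dim Z<\infty$ by Theorem \ref{Theorem: Finite-dimensionality of Z-boundaries}.
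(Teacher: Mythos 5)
Your proposal is correct in outline and follows essentially the same route as the paper: the content of the theorem is the inequality $\dim_{R}Z\leq\gcd_{R}Z$, and both you and the paper obtain it by rerunning Dranishnikov's argument from \cite{Dra06} with cocompactness of $(X,d)$ standing in for the geometric group action, the substitutions being exactly Lemmas \ref{Lemma: uniform contractibility}--\ref{Lemma: bounded homotopy} together with the controlled condition and Theorem \ref{Theorem: Finite-dimensionality of Z-boundaries}. The only real difference is one of packaging. You reduce, via the exact sequences of $(\overline{X},A)$ and of the open--closed decomposition $\Omega=X\cup V$, to surjectivity of the restrictions $\widetilde{\check{H}}^{j}(Z;R)\rightarrow\widetilde{\check{H}}^{j}(A;R)$ for closed $A\subseteq Z$ and $j\geq d$ --- the Bestvina--Mess formulation --- and then defer that surjectivity to a shadow construction in the style of Dranishnikov. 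The paper instead isolates the dual statement as Lemma \ref{Lemma: Primary Lemma}: if $\check{H}_{c}^{i}(X;R)=0$ for all $i>n$, there is an $r$ so that every inclusion-induced map $\check{H}_{c}^{i}(U;R)\rightarrow\check{H}_{c}^{i}(N_{r}(U);R)$, $U$ open in $X$, is trivial for $i>n$; cocompactness enters only there, and combined with (\ddag) and Lemma \ref{Lemma: Cech cohomology vs compact supports} it yields $\dim_{R}Z\leq n-1$ directly. The two formulations carry the same content, and either way the genuinely new work is confined to checking that Dranishnikov's proof of that one lemma survives the weakened hypotheses. Two small remarks: Roe's coarse cohomology, which you invoke, is neither needed nor used here --- it appears only in the group-cohomology theorem of the preceding subsection, and the present theorem deliberately assumes no group action at all; and your dunce-hat example is an apt illustration of why cocompactness cannot be dropped.
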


To obtain the main assertion of Theorem
\ref{Theorem: Dranishnikov's theorem for model spaces} we appeal to the
argument presented in \cite{Dra06}. A little additional care must be taken
since our hypotheses are weaker than his: rather than a geometric action on
$X$, we only assume that $X$ is cocompact. Fortunately, that presents only
minor challenges. A few adjustments are necessary, but mostly it suffices to
reexamine each step of the earlier proof and observe that the weaker
hypothesis is sufficient. As for the adjustments, those were anticipated when
we stated and proved Lemmas \ref{Lemma: uniform contractibility}%
-\ref{Lemma: bounded homotopy}.

In \cite{Dra06}, the reader will come across frequent uses of inclusion
induced homomorphisms $\check{H}_{c}^{n}(V;R)\rightarrow\check{H}_{c}%
^{n}(U;R)$ where $V\subseteq U$ are open sets of a metric space $X$. At first
counterintuitive, since cohomology is a contravariant functor\footnote{To make
cohomology with compact supports a functor, one must restrict to proper
maps.}, these homomorphisms are discussed briefly in \cite[\S 3.3]{Hat02} and
in more detail in \cite[p.216]{GrHa81} and \cite{Mas78}. The following lemma
is a rephrased and slightly generalized version of the key lemma in
Dranishnikov's proof.

\begin{lemma}
[{see \cite[Lemma 3]{Dra06}}]\label{Lemma: Primary Lemma}Let $\left(
X,d\right)  $ be a model geometry and $R$ a PID. Suppose $\check{H}_{c}%
^{i}\left(  X;R\right)  =0$ for all $i>n$. Then there is a number $r$ such
that for every open set $U\subseteq X$, the inclusion induced map $\check
{H}_{c}^{i}(U;R)\rightarrow\check{H}_{c}^{i}(N_{r}(U);R)$ is trivial for all
$i>n$.
\end{lemma}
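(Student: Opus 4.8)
The plan is to reduce the statement to a fact about a finite-dimensional polyhedral model for $X$, where the analogous assertion becomes a consequence of the long exact sequence of a pair together with a Mayer-Vietoris-style cohomology dimension bound. First I would invoke Lemma~\ref{Lemma: finite macroscopic dimension and refinements} to obtain a finite order, uniformly bounded open cover $\mathcal{U}$ of $X$, and let $K$ be its nerve and $\psi:X\rightarrow K$ a barycentric map. Since $\mathcal{U}$ has finite order, $K$ is finite-dimensional; say $\dim K=m$. Lemma~\ref{Lemma: uniform contractibility} gives uniform contractibility of $X$ (a model geometry is proper, cocompact, and contractible), so Lemma~\ref{Lemma: bounded homotopy} supplies a map $s:K\rightarrow X$ and a bounded homotopy $H$ joining $s\circ\psi$ to $\operatorname{id}_X$. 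The upshot is that $\psi$ and $s$ are coarse equivalences that move points a bounded amount, so they let us transfer the coarse-geometric content of the question back and forth between $X$ and $K$ while distorting distances (hence ``neighborhood of radius $r$'') only by a bounded additive error.

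The core estimate lives on $K$. Because $X$ is an AR, $\widetilde{\check H}^{i}_c(X;R)=0$ for all $i$, but the hypothesis only assumes vanishing above $n$; in either case, the key point is that $K$ is an $m$-dimensional locally finite complex carrying a cocompact symmetry, and for such complexes a uniformly bounded open set $U$ has the property that the inclusion $U\hookrightarrow N_r(U)$ kills all compactly supported cohomology above $n$ once $r$ exceeds a threshold depending only on $m$, the mesh of $\mathcal{U}$, and the cohomology of $X$. Concretely, I would cover $U$ by a controlled number of ``chunks'' coming from the cells of $K$, run an induction on the number of chunks using the Mayer--Vietoris sequence for $\check H^*_c$, and use the fact that each individual chunk, together with a bounded neighborhood, is contractible (again by uniform contractibility) so its compactly supported cohomology sits in degrees $\le m$ and in fact vanishes above $n$ by the hypothesis pushed through $\psi$ and $s$. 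The bookkeeping that makes this uniform is exactly the finite-order property of the cover: only boundedly many chunks meet any given chunk, so the nerve of the chunk decomposition has bounded local complexity and the induction terminates in a bounded number of steps with a bounded accumulated ``radius cost.''

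The main obstacle I anticipate is the \emph{uniformity} of the constant $r$: it must not depend on $U$. Proving triviality of $\check H^i_c(U;R)\rightarrow\check H^i_c(N_r(U);R)$ for a \emph{fixed} bounded $U$ is routine from contractibility, but the content of the lemma is a single $r$ working for \emph{all} open $U\subseteq X$ simultaneously, including wild, spread-out, non-bounded ones. The resolution is that a general open $U$ is an increasing union of its intersections with bounded pieces, and $\check H^*_c$ of an open set is a direct limit over compact subsets; so it suffices to handle $U$ contained in a bounded region, and there cocompactness makes all bounded regions ``look the same'' up to isometry, forcing the constant to be uniform. Making this direct-limit reduction precise for Alexander--Spanier compactly supported cohomology, and checking that the inclusion-induced maps on $\check H^*_c$ (which are defined only because the inclusions are open, hence the ``wrong-way'' maps of \cite[\S3.3]{Hat02}) behave well under these limits, is the technical heart; after that, the Mayer--Vietoris induction on $K$ and the bounded-homotopy transfer to $X$ are comparatively mechanical.
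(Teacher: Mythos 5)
Your scaffolding (Lemmas~\ref{Lemma: uniform contractibility}--\ref{Lemma: bounded homotopy}: a finite-order uniformly bounded cover, its nerve $K$, a barycentric map $\psi$, a section $s$, and a bounded homotopy) is exactly the apparatus the paper uses; indeed the paper's ``proof'' of this lemma is simply the observation that, once those three lemmas are in hand, Dranishnikov's proof of \cite[Lemma 3]{Dra06} goes through with cocompactness replacing the geometric group action. But the engine you then attach to that scaffolding is broken. The central claim---that each bounded ``chunk,'' being contractible by uniform contractibility, has $\check{H}_c^{i}$ vanishing for $i>n$---confuses ordinary cohomology with compactly supported cohomology. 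Contractibility kills $\widetilde{\check{H}}^{\ast}$, not $\check{H}_c^{\ast}$: an open $m$-ball has $\check{H}_c^{m}\neq 0$. (Your earlier remark that $\widetilde{\check{H}}_c^{i}(X;R)=0$ for all $i$ because $X$ is an AR is the same error; if that were true the lemma's hypothesis would be vacuous.) In the cases where the lemma has content---e.g.\ $X$ a $2$-dimensional model geometry for a free group, $n=1$---a chunk containing a $2$-cell has $\check{H}_c^{2}\neq 0$, and its inclusion into a bounded neighborhood does not kill that class. The only thing that kills it is the \emph{global} hypothesis $\check{H}_c^{i}(X;R)=0$ for $i>n$, made quantitative; that quantitative statement is precisely what is to be proved, so your induction assumes a local form of the conclusion.

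The second gap is the uniformity reduction. It is true that every class in $\check{H}_c^{i}(U)$ comes from $\check{H}_c^{i}(V)$ for some bounded open $V\subseteq U$, but ``cocompactness makes all bounded regions look the same up to isometry'' is false: bounded open sets of arbitrarily large diameter and arbitrarily complicated topology occur, and cocompactness only identifies points (up to bounded error), not such sets. So this step does not deliver a single $r$ independent of $U$. The argument that actually works (Dranishnikov's, which the paper imports) runs a \emph{downward} induction on the degree $i$: the base case $i>m$ (the macroscopic dimension) holds because, via $\psi$, $s$, and the bounded homotopy, the map $\check{H}_c^{i}(U)\rightarrow\check{H}_c^{i}(N_r(U))$ factors through the compactly supported cohomology of an open subset of the $m$-dimensional complex $K$, which vanishes; the inductive step from $i+1$ down to $i>n$ uses the long exact sequence of the pair $(X,X-U)$ together with $\check{H}_c^{i}(X)=\check{H}_c^{i+1}(X)=0$ to trade degree $i$ on closed complements for degree $i+1$ on open sets. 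That is where the hypothesis enters, and it is the step your proposal is missing. I would redirect your effort to verifying that Lemmas~\ref{Lemma: uniform contractibility}--\ref{Lemma: bounded homotopy} supply every use Dranishnikov makes of the group action, rather than rebuilding the induction from Mayer--Vietoris.
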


\noindent The necessary generalization is accomplished by inserting Lemmas
\ref{Lemma: uniform contractibility}-\ref{Lemma: bounded homotopy} in the
appropriate places. From there the proof of Theorem 1 in \cite{Dra06} carries
over to provide a proof of Theorem
\ref{Theorem: Dranishnikov's theorem for model spaces}. The idea is to use
Lemma \ref{Lemma: Primary Lemma} to conclude that any corresponding controlled
$\mathcal{Z}$-boundary would necessarily have dimension $\leq n-1$. Combined
with Lemma \ref{Lemma: Cech cohomology vs compact supports}, this provides the
key inequality, $\dim_{R}Z\leq\gcd_{R}Z$. In the lowest dimension, this
argument provides more than is contained in the statement of Theorem
\ref{Theorem: Dranishnikov's theorem for model spaces}. This difference is
significant since it allows us to obtain extensions of \cite[Corollary 1.3
(d)]{BeMe91} and the $d=0$ case of \cite[Main Theorem]{GeOn07} (which is
already covered by Theorem
\ref{Theorem: Dranishnikov's theorem for model spaces} when $d>0$).

\begin{proposition}
Suppose $(\overline{X},Z,d)$ is a model $\mathcal{Z}$-geometry and $\dim
_{R}Z=0$. Then

\begin{enumerate}
\item $\check{H}_{c}^{1}\!\left(  X;R\right)  \neq0$, and

\item $Z$ is not a one-point set.
\end{enumerate}
\end{proposition}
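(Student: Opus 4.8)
The plan is to reduce both parts to the single assertion ($\star$): \emph{if $(\overline{X},Z,d)$ is a model $\mathcal{Z}$-geometry and $\check{H}_{c}^{i}(X;R)=0$ for all $i\geq 1$, then $Z=\emptyset$}. Granting ($\star$), I would argue by contradiction. Note first that the hypothesis $\dim_{R}Z=0$ forces $Z\neq\emptyset$ (there must be a nonempty open $U\subseteq Z$ with $\check{H}_{c}^{0}(U;R)\neq 0$), and likewise the assumption in part (2) that $Z$ is a point gives $Z\neq\emptyset$. So in either case it suffices to derive $\check{H}_{c}^{i}(X;R)=0$ for all $i\geq 1$, apply ($\star$), and obtain the contradiction $Z=\emptyset$.

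For part (2): if $Z$ is a one-point set then $\widetilde{\check{H}}^{\,n}(Z;R)=0$ for every $n\geq 0$, so Lemma~\ref{Lemma: Cech cohomology vs compact supports} yields $\check{H}_{c}^{n+1}(X;R)\cong\widetilde{\check{H}}^{\,n}(Z;R)=0$; that is, $\check{H}_{c}^{i}(X;R)=0$ for all $i\geq 1$, and ($\star$) finishes it. For part (1): suppose $\check{H}_{c}^{1}(X;R)=0$. Since $Z$ is an open subset of itself, the hypothesis $\dim_{R}Z=0$ gives $\check{H}_{c}^{i}(Z;R)=0$ for all $i\geq 1$, hence $\widetilde{\check{H}}^{\,i}(Z;R)=\check{H}_{c}^{i}(Z;R)=0$ for $i\geq 1$ ($Z$ being a compactum); by Lemma~\ref{Lemma: Cech cohomology vs compact supports} this means $\check{H}_{c}^{j}(X;R)=0$ for all $j\geq 2$. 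Combined with the assumed degree-$1$ vanishing, $\check{H}_{c}^{i}(X;R)=0$ for all $i\geq 1$, so ($\star$) gives $Z=\emptyset$, contradicting $\dim_{R}Z=0$. Hence $\check{H}_{c}^{1}(X;R)\neq 0$; equivalently, by Lemma~\ref{Lemma: Cech cohomology vs compact supports}, $\widetilde{\check{H}}^{\,0}(Z;R)\neq 0$, so $Z$ is disconnected.

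The real work is ($\star$), which is precisely the $n=0$ case of the argument sketched just after Theorem~\ref{Theorem: Dranishnikov's theorem for model spaces}. Feeding $\check{H}_{c}^{i}(X;R)=0$ $(i>0)$ into Lemma~\ref{Lemma: Primary Lemma} produces a scale $r$ for which every inclusion-induced map $\check{H}_{c}^{i}(U;R)\to\check{H}_{c}^{i}(N_{r}(U);R)$ is trivial for all $i\geq 1$ and all open $U\subseteq X$; running Dranishnikov's cover-refinement scheme---in the form adapted to a merely cocompact model geometry via Lemmas~\ref{Lemma: uniform contractibility}--\ref{Lemma: bounded homotopy}---then forces $\dim_{R}Z\leq -1$, and since $\check{H}^{0}$ of a nonempty compactum is nonzero, this means $Z=\emptyset$.

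The main obstacle, and the only genuinely new wrinkle beyond the proof of Theorem~\ref{Theorem: Dranishnikov's theorem for model spaces}, is to verify that this lowest-dimensional instance of Dranishnikov's scheme still runs cleanly---in particular that the cover-refinement step, with no residual positive-degree cohomology to obstruct it, really does produce covers of $\overline{X}$ avoiding $Z$ altogether---and that cocompactness substitutes for a geometric group action at every point where the original proof invoked one. These are exactly the adjustments the preceding lemmas were set up to absorb, so I expect the verification to be routine once organized carefully. (The pair $X=\mathbb{R}$, $Z=S^{0}$, which satisfies $\dim_{R}Z=0$ with both conclusions holding non-vacuously, is a useful sanity check on the statement.)
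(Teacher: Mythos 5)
Your proof is correct and takes essentially the same route as the paper: both arguments hinge on the $n=0$ instance of the Dranishnikov scheme (your $(\star)$, the paper's ``the above argument would imply that $Z=\varnothing$'') together with Lemma~\ref{Lemma: Cech cohomology vs compact supports} and the fact that $\dim_{R}Z=0$ kills $\check{H}^{i}(Z;R)$ for $i\geq 1$. The only cosmetic difference is that the paper deduces part (2) from part (1) via $\widetilde{\check{H}}^{0}(Z;R)\neq 0$, whereas you run $(\star)$ directly a second time.
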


\begin{proof}
First note that $\check{H}_{c}^{i}\!\left(  X;R\right)  =0$ for all $i>1$,
otherwise $\gcd_{R}Z\geq1$, contradicting the assumption that $\dim_{R}Z=0$.
Since $X$ is connected and noncompact, $\check{H}_{c}^{0}\!\left(  X;R\right)
=0$. So, if $\check{H}_{c}^{1}\!\left(  X;R\right)  =0$, then $\check{H}%
_{c}^{i}\!\left(  X;R\right)  =0$ for all $i$, and the above argument would
imply that $Z=\varnothing$, a contradiction. Assertion 1) follows.

For assertion 2), apply Lemma \ref{Lemma: Cech cohomology vs compact supports}
to conclude that $\widetilde{\check{H}}^{0}\!\left(  Z;R\right)  \neq0$.
\end{proof}

\begin{corollary}
The boundary of a model $\mathcal{Z}$-geometry never has the \v{C}ech
cohomology of a point.
\end{corollary}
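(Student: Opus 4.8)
The plan is to derive a contradiction from the preceding Proposition together with Theorem~\ref{Theorem: Dranishnikov's theorem for model spaces}. Suppose $(\overline{X},Z,d)$ is a model $\mathcal{Z}$-geometry whose boundary $Z$ has the \v{C}ech cohomology of a point, say over the PID $R$; that is, $\check{H}^{0}(Z;R)\cong R$ while $\check{H}^{n}(Z;R)=0$ for all $n\geq1$. Since $\check{H}^{0}(Z;R)\neq0$ we have $Z\neq\varnothing$, and the hypothesis is equivalent to $\widetilde{\check{H}}^{n}(Z;R)=0$ for all $n\geq0$.

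First I would read off the global cohomological dimension of $Z$. A $\mathcal{Z}$-set is by definition closed, so $Z$ is a closed subset of the compact space $\overline{X}$ and hence compact; therefore $\check{H}_{c}^{n}(Z;R)=\check{H}^{n}(Z;R)$ for every $n$. This gives $\check{H}_{c}^{n}(Z;R)=0$ for $n\geq1$ and $\check{H}_{c}^{0}(Z;R)\cong R\neq0$, so $\gcd\nolimits_{R}Z=0$. Theorem~\ref{Theorem: Dranishnikov's theorem for model spaces} now yields $\dim_{R}Z=\gcd\nolimits_{R}Z=0$.

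Having established $\dim_{R}Z=0$, I would apply the preceding Proposition to $(\overline{X},Z,d)$ with this $R$: its first conclusion gives $\check{H}_{c}^{1}(X;R)\neq0$, and Lemma~\ref{Lemma: Cech cohomology vs compact supports} then converts this into $\widetilde{\check{H}}^{0}(Z;R)\cong\check{H}_{c}^{1}(X;R)\neq0$. This contradicts $\widetilde{\check{H}}^{0}(Z;R)=0$, and the corollary follows. (One could instead cite the second conclusion of the Proposition after remarking that a space with the \v{C}ech cohomology of a point is in particular ``cohomologically connected,'' but using the cohomological statement directly is cleanest.)

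I do not expect a genuine obstacle here: the argument only assembles results already in hand. The two points that require a little care are recording that the boundary of a model $\mathcal{Z}$-geometry is compact---so that $\gcd\nolimits_{R}$ is computed from ordinary \v{C}ech cohomology---and noting that the hypothesis forces $Z\neq\varnothing$, which rules out the degenerate case where $X$ is already compact. A self-contained variant avoids the Proposition altogether: Lemma~\ref{Lemma: Cech cohomology vs compact supports} gives $\check{H}_{c}^{m}(X;R)=0$ for all $m\geq1$, while connectedness and noncompactness of $X$ give $\check{H}_{c}^{0}(X;R)=0$, so $\check{H}_{c}^{\ast}(X;R)=0$; feeding this into Lemma~\ref{Lemma: Primary Lemma} (with $n=0$) and the dimension-bounding argument of \cite{Dra06} forces any controlled $\mathcal{Z}$-boundary to have covering dimension $\leq-1$, i.e.\ to be empty, contradicting $Z\neq\varnothing$ once more.
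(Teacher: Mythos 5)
Your argument is correct and is exactly the derivation the paper intends: the hypothesis forces $\gcd_R Z=0$, hence $\dim_R Z=0$ by Theorem~\ref{Theorem: Dranishnikov's theorem for model spaces}, and then part~(1) of the preceding Proposition together with Lemma~\ref{Lemma: Cech cohomology vs compact supports} gives $\widetilde{\check{H}}^{0}(Z;R)\neq 0$, a contradiction. Your side remarks (compactness of $Z$, nonemptiness, and the self-contained variant via the Dranishnikov dimension bound) are all accurate.
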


\subsection{Generalization of Ontaneda's Almost Geodesic Completeness Theorem}

Recall that a metric space $\left(  X,d\right)  $ is \emph{almost geodesically
complete }if there exists $R>0$ such that, for every $x,y\in X$, there is a
geodesic ray $r:[0,\infty)\rightarrow X$ such that $r\left(  0\right)  =x$ and
$d\left(  r,y\right)  \leq R$. Our next goal is an application of Theorem
\ref{Theorem: Dranishnikov's theorem for model spaces} which can be viewed as
a generalization of the \textquotedblleft Almost Geodesic Completeness
Theorems\textquotedblright\ found in \cite{Ont05} and \cite{GeOn07} for CAT(0)
spaces, and in \cite{BeMe91} (attributed to M. Mihalik) for hyperbolic spaces.

Let $\left(  \overline{X},Z,d\right)  $ be a model $\mathcal{Z}$-geometry and
$H:\overline{X}\times\left[  0,1\right]  \rightarrow\overline{X}$ be a
homotopy that instantly pulls $\overline{X}$ off from $Z$. Using the
contractibility of $X$, we can assume further that $H$ contracts $\overline
{X}$ to a point $x_{0}\in X$. Let
\[
CZ=Z\times\left[  0,\infty\right]  /\sim\text{\ \ and \ \ }OZ=Z\times
\lbrack0,\infty)/\sim
\]
be the \emph{cone} and \emph{open cone} on $Z$, where $\sim$ identifies
$Z\times\left\{  0\right\}  $ to a point. We view $Z$ as a subset $CZ$ by
identifying it with $Z\times\left\{  \infty\right\}  $. By reversing and
reparameterizing $H$, we obtain a map $\overline{F}:\left(  CZ,Z\right)
\rightarrow\left(  \overline{X},Z\right)  $ whose restriction to the open cone
$F:OZ\rightarrow X$ is proper. These maps, and any others with the same
properties, are the topic of the next theorem.

\begin{theorem}
\label{Theorem: Ontaneda for model geometries}Given a model $\mathcal{Z}%
$-geometry $\left(  \overline{X},Z,d\right)  $ and a map $\overline{F}:\left(
CZ,Z\right)  \rightarrow\left(  \overline{X},Z\right)  $ which takes $Z$
identically onto $Z$ and $OZ$ into $X$, there exists $R>0$ such that
$\overline{F}\left(  OZ\right)  $ is $R$-dense in $X$.
\end{theorem}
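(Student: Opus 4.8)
The plan is to argue cohomologically, producing the uniform radius $R$ from the finite macroscopic dimension of $X$ and its cocompactness rather than from any analysis of the cone structure. Write $F:=\overline F|_{OZ}\colon OZ\to X$. Since $\overline F(OZ)\subseteq X$ while $\overline F(Z)=Z$, the $\overline F$-preimage of a compact set lies in $OZ$ and is closed in the compact space $CZ$; hence $F$ is proper and induces $F^{\ast}\colon\check H_c^{\ast}(X;R)\to\check H_c^{\ast}(OZ;R)$ for every PID $R$. The key first step is that $F^{\ast}$ is an \emph{isomorphism}. Indeed $\overline X$ is an AR and $CZ$ is a cone, so both are contractible; therefore in the long exact sequences of the pairs $(\overline X,Z)$ and $(CZ,Z)$ the connecting homomorphisms identify $\check H^{k}(\overline X,Z;R)$ and $\check H^{k}(CZ,Z;R)$ with $\widetilde{\check H}^{\,k-1}(Z;R)$, and naturality together with $\overline F|_{Z}=\operatorname{id}_{Z}$ forces $\overline F^{\ast}\colon\check H^{\ast}(\overline X,Z;R)\to\check H^{\ast}(CZ,Z;R)$ to correspond to the identity of $\widetilde{\check H}^{\,\ast-1}(Z;R)$. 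Passing through the natural excision isomorphisms $\check H^{\ast}(\overline X,Z;R)\cong\check H_c^{\ast}(X;R)$ and $\check H^{\ast}(CZ,Z;R)\cong\check H_c^{\ast}(OZ;R)$ for compact metric pairs identifies $F^{\ast}$ with this isomorphism; in particular $F^{\ast}$ is injective.

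Second, I would use cocompactness to show that sufficiently large metric balls carry nontrivial compactly supported cohomology surviving in $X$. By \lemref{Lemma: finite macroscopic dimension and refinements}, $X$ has finite macroscopic dimension; combined with \thmref{Theorem: Finite-dimensionality of Z-boundaries}, \lemref{Lemma: Cech cohomology vs compact supports}, and the fact that $Z$ never has the \v{C}ech cohomology of a point, this yields an integer $N\ge1$ with $\check H_c^{N}(X;\mathbb{Z})\neq0$ and $\check H_c^{i}(X;\mathbb{Z})=0$ for $i>N$ (the case $Z=\emptyset$, i.e. $X$ compact, being trivial). Since $\check H_c^{\ast}(X;\mathbb{Z})$ is the direct limit, under extension by zero, of $\check H_c^{\ast}(U;\mathbb{Z})$ over relatively compact open sets $U\subseteq X$, there is a bounded ball $B_d(x_0,\rho)$ for which $\check H_c^{N}(B_d(x_0,\rho);\mathbb{Z})\to\check H_c^{N}(X;\mathbb{Z})$ is nonzero. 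For an arbitrary $x\in X$, cocompactness supplies an isometry $\gamma$ of $X$ with $d(\gamma x_0,x)\le R_0$ for a fixed constant $R_0$; as $\gamma$ acts as an isomorphism on $\check H_c^{\ast}(X)$ and carries $B_d(x_0,\rho)$ into $B_d(x,\rho+R_0)$, it follows that $\check H_c^{N}(B_d(x,\rho+R_0);\mathbb{Z})\to\check H_c^{N}(X;\mathbb{Z})$ is nonzero for \emph{every} $x\in X$.

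Third, I would close by contradiction with $R:=\rho+R_0+1$. If $\overline F(OZ)$ were not $R$-dense, choose $x$ with $B_d(x,R)\cap\overline F(OZ)=\emptyset$. Then $F(OZ)$ lies in the closed set $A:=X\setminus B_d(x,\rho+R_0)$, so $F$ factors through the proper inclusion $A\hookrightarrow X$; applying $\check H_c^{\ast}(-;\mathbb{Z})$ and using injectivity of $F^{\ast}$ shows the restriction $\check H_c^{\ast}(X;\mathbb{Z})\to\check H_c^{\ast}(A;\mathbb{Z})$ is injective. But then the long exact sequence relating the compactly supported cohomology of the open ball $B_d(x,\rho+R_0)$, of $X$, and of its closed complement $A$ forces $\check H_c^{N}(B_d(x,\rho+R_0);\mathbb{Z})\to\check H_c^{N}(X;\mathbb{Z})$ to vanish, contradicting the second step. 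Hence $\overline F(OZ)$ is $R$-dense in $X$.

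The step I expect to demand the most care is the identification of $F^{\ast}$ as an isomorphism: one must verify that the excision isomorphisms $\check H^{\ast}(Y,A;R)\cong\check H_c^{\ast}(Y-A;R)$ are natural with respect to maps of pairs that restrict to proper maps on the complements, and that the two long exact sequences splice compatibly with $\overline F^{\ast}$. It is precisely because $Z$ need not be an ANR (and $X$ may be infinite dimensional) that one must stay within \v{C}ech--Alexander--Spanier cohomology with compact supports, as the paper's conventions already require. The remaining ingredients---properness of $F$, the cocompactness transfer, and the final exact-sequence argument---are routine once the finite cohomological dimension of $X$ is in hand.
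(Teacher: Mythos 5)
Your proposal is correct and follows essentially the same route as the paper: the identical first step showing $F^{\ast}\colon\check H_c^{\ast}(X)\to\check H_c^{\ast}(OZ)$ is an isomorphism via the two long exact sequences over $\operatorname{id}_{Z}$, then the same use of cocompactness to place a nontrivial compactly supported class in every metric ball of a fixed radius, and the same contradiction when $F(OZ)$ misses such a ball. The only difference is cosmetic: the paper extracts the contradiction by translating a representative at a finite stage of the direct limit $\varinjlim\check H^{k}(X,X-B_i)$ and observing $F^{-1}(gB_j)=\varnothing$, whereas you package it via the factorization of $F$ through the closed complement $A$ and the exact sequence of the pair $(X,A)$ --- an equivalent, arguably slightly cleaner, bookkeeping of the same obstruction.
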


\begin{proof}
The map $\overline{F}$ induces a commuting diagram of long exact sequences%
\[%
\begin{array}
[c]{ccccccccccc}%
\leftarrow & \check{H}^{k}\left( Z\right)  & \leftarrow &
\check{H}^{k}\left(   \overline{X}\right)  & \leftarrow & \check{H}^{k}\left(
\overline{X},Z\right)  & \leftarrow & \check{H}^{k-1}\left( Z\right)  & \leftarrow & \check{H}^{k-1}\left(  \overline
{X}\right)  & \leftarrow\\
& \cong\ \downarrow\operatorname*{id}  &  & \cong\ \downarrow\ &  &
\downarrow\overline{F}^{\ast} &  & \cong%
\ \downarrow\operatorname*{id}   &  &\cong\ \downarrow\ & \\
\leftarrow &  \check{H}%
^{k}\left(  Z\right) & \leftarrow & \check{H}^{k}\left(  CZ\right)  & \leftarrow & \check{H}^{k}\left(  CZ,Z\right)  &
\leftarrow &\check{H}%
^{k-1}\left(  Z\right)  &  \leftarrow & \check{H}^{k-1}\left(  CZ\right)  & \leftarrow
\end{array}
\]
By the Five-Lemma, $\overline{F}^{\ast}$ is an isomorphism for all $k$. From
this we may conclude that the restriction map $F:OZ\rightarrow X$, which is
necessarily proper, induces isomorphisms $F^{\ast}:\check{H}_{c}^{i}\left(
X\right)  \rightarrow\check{H}_{c}^{i}\left(  OZ\right)  $ for all $i$.

For each integer $i$, let $B_{i}=B_{d}(x_{0},i)$ to obtain an exhaustion
$B_{1}\subseteq B_{2}\subseteq\cdots$ of $X$ by open sets with compact
closures, and let $C_{i}=F^{-1}\left(  B_{i}\right)  $ to get a
corresponding exhaustion $C_{1}\subseteq C_{2}\subseteq\cdots$ of $OZ$. Then
\[
\check{H}_{c}^{k}\left(  X\right)  =\underrightarrow{\lim}\check{H}^{k}\left(
X,X-B_{i}\right)  \text{\ \ and \ \ }\check{H}_{c}^{k}\left(  OZ\right)
=\underrightarrow{\lim}\check{H}^{k}\left(  OZ,OZ-C_{i}\right)
\]
and there is a commuting diagram between direct sequences
\[%
\begin{array}
[c]{ccccccc}%
\rightarrow & \check{H}^{k}\left(  X,X-B_{j-1}\right)  & \rightarrow &
\check{H}^{k}\left(  X,X-B_{j}\right)  & \rightarrow & \check{H}^{k}\left(
X,X-B_{j+1}\right)  & \rightarrow\\
& \downarrow F^{\ast} &  & \downarrow F^{\ast} &  & \downarrow F^{\ast} & \\
\rightarrow & \check{H}^{k}\left(  OZ,OZ-C_{j-1}\right)  & \rightarrow &
\check{H}^{k}\left(  OZ,OZ-C_{j}\right)  & \rightarrow & \check{H}^{k}\left(
OZ,OZ-C_{j+1}\right)  & \rightarrow
\end{array}
\]
By Theorem \ref{Theorem: Dranishnikov's theorem for model spaces}, there exist
$k\geq1$ for which $\check{H}_{c}^{k}\left(  X\right)  $ contains a nontrivial
element $\alpha$, and by properties of direct limits, $j$ can be chosen
sufficiently large that there exists $\alpha_{j}\in\check{H}^{k}\left(
X,X-B_{j}\right)  $ which projects to $\alpha\in\check{H}_{c}^{k}\left(
X\right)  $. Then $F^{\ast}\left(  \alpha_{j}\right)  $ likewise projects to
the nontrivial element $F^{\ast}\left(  \alpha\right)  \in\check{H}_{c}%
^{k}\left(  OZ\right)  $.

Suppose now that our theorem fails. Then, for all $R>0$, there exists
$x_{R}\in X$ such that $B_{d}\left(  x_{R};R\right)  \cap F%
(OZ)=\varnothing.$ By choosing $R>2j$ and using the cocompactness of $X$, we
may choose $g\in\operatorname{Isom}\left(  X\right)  $ such that
$gB_{j}\subseteq B_{d}\left(  x_{R},R\right)  $. For each integer $i$, let
$B_{i}^{\prime}=gB_{i}=B_{d}(gx_{0},i)$ and let $D_{i}=F^{-1}\left(
B_{i}^{\prime}\right)  $ to get a commuting diagram
\[%
\begin{array}
[c]{ccccccc}%
\rightarrow & \check{H}^{k}\left(  X,X-B_{j-1}^{\prime}\right)  & \rightarrow
& \check{H}^{k}\left(  X,X-B_{j}^{\prime}\right)  & \rightarrow & \check
{H}^{k}\left(  X,X-B_{j+1}^{\prime}\right)  & \rightarrow\\
& \downarrow F^{\ast} &  & \downarrow F^{\ast} &  & \downarrow F^{\ast} & \\
\rightarrow & \check{H}^{k}\left(  OZ,OZ-D_{j-1}\right)  & \rightarrow &
\check{H}^{k}\left(  OZ,OZ-D_{j}\right)  & \rightarrow & \check{H}^{k}\left(
OZ,OZ-D_{j+1}\right)  & \rightarrow
\end{array}
\]

Since $g$ is a homeomorphism, $\alpha^{\prime}=(g^{\ast})^{-1}\left(
\alpha\right)  $ is a nontrivial element of $\check{H}_{c}^{k}\left(
X\right)  =\underrightarrow{\lim}\check{H}^{k}\left(  X,X-B_{i}^{\prime
}\right)  $ and $\alpha_{j}^{\prime}=(g^{\ast})^{-1}\left(  \alpha_{j}\right)
$ is an element of $\check{H}_{c}^{k}\left(  X,X-B_{j}^{\prime}\right)  $ that
projects to $\alpha^{\prime}$. By commutativity, $F^{\ast}\left(
\alpha^{\prime}\right)  $ projects to the corresponding nontrivial element of
$\check{H}_{c}^{k}\left(  OZ\right)  =\underrightarrow{\lim}\check{H}%
^{k}\left(  OZ,OZ-D_{i}\right)  $. But, since $B_{j}^{\prime}$ is disjoint
from $F\left(  OZ\right)  $, $D_{j}=\varnothing$, so $\check{H}^{k}\left(
OZ,OZ-D_{j}\right)  =0$, giving a contradiction.
\end{proof}

\begin{remark}
Given $\overline{F}:\left(  CZ,Z\right)  \rightarrow\left(  \overline
{X},Z\right)  $ as above, we can (by restriction) associate to each $z\in Z$
the proper (not necessarily embedded) ray $r_{z}:[0,\infty)\rightarrow X$
which emanates from $x_{0}$ and limits to $z$ in $\overline{X}$. Theorem
\ref{Theorem: Ontaneda for model geometries} can be interpreted as saying that
every $x\in X$ is at a distance $<R$ from one of these rays. By cocompactness
we have:
\end{remark}

\begin{corollary}
\label{Corollary: Ontaneda for model geometries}Given a model $\mathcal{Z}%
$-geometry $\left(  \overline{X},Z,d\right)  $ and a map $\overline{F}:\left(
CZ,Z\right)  \rightarrow\left(  \overline{X},Z\right)  $ which takes $Z$
identically onto $Z$ and $OZ$ into $X$, there exists $S>0$ such that, for
every $x_{1},x_{2}\in X$, there exists $z\in Z$ and $g\in\operatorname{Isom}%
\left(  X\right)  $ such that $d\left(  gx_{0},x_{1}\right)  <S$ and $d\left(
gr_{z},x_{2}\right)  <S$.
\end{corollary}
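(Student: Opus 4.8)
The plan is to deduce the corollary from Theorem~\ref{Theorem: Ontaneda for model geometries} together with the cocompactness that is built into the definition of a model geometry; no new ideas are needed beyond those two inputs. First I would fix, once and for all, the constant $R>0$ furnished by Theorem~\ref{Theorem: Ontaneda for model geometries}, so that $\overline{F}(OZ)=\bigcup_{z\in Z}r_z([0,\infty))$ is $R$-dense in $X$. Here, as in the Remark preceding the corollary, $x_0$ denotes the basepoint to which the contracting homotopy $H$ was arranged to carry $\overline{X}$, and $r_z:[0,\infty)\to X$ is the associated proper ray emanating from $x_0$ and limiting to $z$ in $\overline{X}$.

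Next I would invoke cocompactness of $(X,d)$. The definition only asserts the existence of \emph{some} point and radius witnessing cocompactness, but since isometries preserve distances one has $B_d[\gamma y,\rho]\subseteq B_d[\gamma x_0,\rho+d(y,x_0)]$ for every $\gamma\in\operatorname{Isom}(X)$; hence a cover of $X$ by balls of radius $\rho$ about the $\operatorname{Isom}(X)$-orbit of any point $y$ yields a cover by balls about the orbit of $x_0$. Consequently there is an $R_0>0$ such that $\{B_d[gx_0,R_0]\mid g\in\operatorname{Isom}(X)\}$ covers $X$, using precisely the basepoint $x_0$ of the rays $r_z$.

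Now, given $x_1,x_2\in X$, I would choose $g\in\operatorname{Isom}(X)$ with $x_1\in B_d[gx_0,R_0]$, so that $d(gx_0,x_1)\le R_0$. Applying $g^{-1}$ to $x_2$ and using the $R$-density from Theorem~\ref{Theorem: Ontaneda for model geometries}, I obtain $z\in Z$ and a point $p$ on the ray $r_z$ with $d(g^{-1}x_2,p)<R$. Since $g$ is an isometry, $gp$ lies on the ray $gr_z$ and $d(x_2,gp)<R$, whence $d(gr_z,x_2)<R$ (and $gx_0=g r_z(0)$ is the initial point of that ray). Setting $S=\max\{R_0,R\}+1$ then gives $d(gx_0,x_1)<S$ and $d(gr_z,x_2)<S$, as required.

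I do not expect any genuine obstacle. The only points requiring (minor) care are: (i) ensuring that the basepoint appearing in the cocompactness hypothesis may be taken to be the same $x_0$ that anchors all the rays $r_z$, which is exactly the isometry-invariance observation of the second paragraph; and (ii) bookkeeping between strict and non-strict inequalities, which is absorbed into the $+1$ in the final choice of $S$. All of the substantive work has already been done in Theorem~\ref{Theorem: Ontaneda for model geometries}; the corollary is its uniform repackaging via the $\operatorname{Isom}(X)$-action.
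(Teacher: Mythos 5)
Your argument is correct and follows essentially the same route as the paper's proof: take the constant $R$ from Theorem~\ref{Theorem: Ontaneda for model geometries}, observe that applying an isometry $g$ preserves the $R$-density of the ray family (now based at $gx_{0}$), use cocompactness to make the $\operatorname{Isom}(X)$-orbit of $x_{0}$ suitably dense, and take the maximum of the two constants. Your explicit justification that the cocompactness basepoint may be taken to be $x_{0}$ itself is a minor detail the paper leaves implicit, not a different approach.
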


\begin{proof}
Let $R>0$ be the constant promised in Theorem
\ref{Theorem: Ontaneda for model geometries} and note that for each
$g\in\operatorname{Isom}\left(  X\right)  $, every $x\in X$ is at a distance
$<R$ from a ray $gr_{z}$ for some $z\in Z$. Choose $R^{\prime}>0$ so that
$\left\{  gx_{0}\mid g\in\operatorname{Isom}\left(  X\right)  \right\}  $ is
$R^{\prime}$-dense in $X$ and let $S=\max\left\{  R,R^{\prime}\right\}  $.
\end{proof}

\begin{corollary}
[{after \cite[Corollary 3]{GeOn07}}]Every cocompact proper CAT(0) space
$\left(  X,d\right)  $ is almost geodesically complete.
\end{corollary}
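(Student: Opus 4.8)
The plan is to realize $(X,d)$ as the underlying geometry of a model $\mathcal{Z}$-geometry whose $\mathcal{Z}$-boundary is the visual boundary $\partial_{\infty}X$, to take the canonical geodesic cone map as the map $\overline{F}$ required by Corollary~\ref{Corollary: Ontaneda for model geometries}, and then to adjust the basepoints of the rays produced by that corollary using a short convexity estimate.

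First I would check that a proper cocompact CAT(0) space $X$ is a model geometry: it is proper and cocompact by hypothesis, and it is a contractible AR, since proper CAT(0) spaces are ANRs (as noted in \S\ref{Section: Model geometries and model Z-geometries}) and geodesic contraction to a fixed point $x_{0}\in X$ exhibits contractibility. Next, adjoining $Z:=\partial_{\infty}X$ with the cone topology yields a compactification $\overline{X}=X\sqcup Z$. It is classical (Bestvina--Mess \cite{BeMe91}; in the cocompact setting see also \cite{GeOn07}) that $Z$ is a $\mathcal{Z}$-set in $\overline{X}$ and that this compactification is \emph{controlled}: a subset of bounded $d$-diameter that drifts toward $Z$ has visual diameter tending to $0$, at a rate depending only on the bound on its $d$-diameter, which is precisely condition \textup{(\ddag$'$)}. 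Thus $(\overline{X},Z,d)$ is a model $\mathcal{Z}$-geometry.

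For the map $\overline{F}$, I would let $r_{z}\colon[0,\infty)\to X$ denote the unique geodesic ray with $r_{z}(0)=x_{0}$ and $r_{z}(\infty)=z$, and define $\overline{F}\colon CZ\to\overline{X}$ by $\overline{F}([z,t])=r_{z}(t)$ for $t<\infty$ and $\overline{F}([z,\infty])=z$. By the standard structure theory of proper CAT(0) spaces, $\overline{F}$ is continuous (indeed a homeomorphism), it carries $Z$ identically onto $Z$, and it carries $OZ$ into $X$; so Corollary~\ref{Corollary: Ontaneda for model geometries} applies and produces $S>0$ such that for every $x_{1},x_{2}\in X$ there exist $z\in Z$ and $g\in\operatorname{Isom}(X)$ with $d(gx_{0},x_{1})<S$ and $d(gr_{z},x_{2})<S$. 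Here $gr_{z}$ is precisely the geodesic ray issuing from $gx_{0}$ and limiting to $gz\in Z$.

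Finally I would pass from $gr_{z}$ to a ray starting at $x_{1}$: let $r\colon[0,\infty)\to X$ be the unique geodesic ray with $r(0)=x_{1}$ asymptotic to $gr_{z}$ (existence and uniqueness of such a ray is standard in proper CAT(0) spaces). Convexity of the CAT(0) metric makes $t\mapsto d(r(t),gr_{z}(t))$ convex, and it is bounded because the two rays are asymptotic; a bounded convex function on $[0,\infty)$ is non-increasing, so $d(r(t),gr_{z}(t))\le d(r(0),gr_{z}(0))=d(x_{1},gx_{0})<S$ for all $t\ge 0$. Choosing $t_{0}$ with $d(gr_{z}(t_{0}),x_{2})<S$ gives $d(r(t_{0}),x_{2})<2S$, hence $d(r,x_{2})<2S$. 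Therefore $X$ is almost geodesically complete with constant $R=2S$. The main obstacle is not the final estimate but the classical background it rests on---that the visual compactification of a proper CAT(0) space is a \emph{controlled} $\mathcal{Z}$-compactification and that the geodesic cone map has the properties listed above, together with the standard facts on asymptotic rays---so in writing the full proof I would pin down citations for these facts; everything after that is the one-line convexity argument.
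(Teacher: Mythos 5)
Your proposal is correct and follows essentially the same route as the paper: attach the visual boundary to obtain a model $\mathcal{Z}$-geometry, take the geodesic cone map as $\overline{F}$, invoke Corollary \ref{Corollary: Ontaneda for model geometries}, and finish with the standard asymptotic-ray construction and the convexity bound $d(r(t),gr_z(t))\le d(x_1,gx_0)<S$, yielding the constant $2S$. The only difference is that you spell out the verification that the visual compactification is a controlled $\mathcal{Z}$-compactification and the convexity argument, both of which the paper cites as standard.
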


\begin{proof}
Let $\left(  \overline{X},\partial_{\infty}X,d\right)  $ be the model
$\mathcal{Z}$-geometry obtained by attaching to $X$ its visual boundary, and
let the contraction $H:\overline{X}\times\left[  0,1\right]  \rightarrow X$ be
the geodesic retraction to $x_{0}$. Then the rays $r_{z}$ are precisely the
geodesic rays in $X$ emanating from $x_{0}$ and the rays $gr_{z}$ are the
geodesic rays emanating from $gx_{0}$.

Let $S>0$ be the constant promised by Corollary
\ref{Corollary: Ontaneda for model geometries} and let $x,y\in X$. Then there
exists $g\in\operatorname{Isom}\left(  X\right)  $ such that $d\left(
gx_{0},x\right)  <S$ and $d\left(  gr_{z},y\right)  <S$. By a standard
construction in CAT(0) geometry there exists a geodesic ray $r$ emanating from
$x$ and asymptotic to $gr_{z}$. That same construction assures that $d\left(
gr_{z}\left(  t\right)  ,r\left(  t\right)  \right)  \leq S$ for all $t>0$, so
$d\left(  y,r\right)  \leq2S$.
\end{proof}

The nature of Theorem \ref{Theorem: Ontaneda for model geometries} brings to
mind another theorem about geodesic rays in CAT(0) spaces. Geoghegan and
Swenson \cite{GeSw19} (also see their arXiv update which contains slightly
stronger conclusions) showed that a 1-ended proper CAT(0) space $\left(
X,d\right)  $ is semistable (all proper maps $r:[0,\infty)\rightarrow X$ are
properly homotopic) if and only if all \emph{geodesic} rays in $X$ emanating
from a common $x_{0}\in X$ are properly homotopic. After analyzing their
proof, we conclude that we have nothing new to offer, except the observation
that their proof already imples the following generalization. 

\begin{theorem}
Let $\overline{X}=X\sqcup Z$ be a (not necessarily controlled) $\mathcal{Z}%
$-compactification of a (not necessarily cocompact) proper metric 1-ended AR
$\left(  X,d\right)  $. Let $x_{0}\in X$ and $\left\{  r_{z}\right\}  _{z\in
Z}$ be the family of (singular) proper rays described above. Then $X$ is
semistable if and only if $r_{z}$ is properly homotopic to $r_{z^{\prime}}$
for all $z,z^{\prime}\in Z$.
\end{theorem}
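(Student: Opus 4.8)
The plan is to transcribe the proof of the CAT(0) statement in \cite{GeSw19}, reading ``geodesic ray from $x_{0}$'' as the singular ray $r_{z}$ and replacing the geodesic contraction of $\overline{X}$ by the homotopy $H$ that instantly pulls $\overline{X}$ off $Z$ (normalized, as above, to contract $\overline{X}$ to $x_{0}$, so $r_{z}$ is a reparametrization of $t\mapsto H(z,1-t)$). Only a handful of geometric inputs of their argument must be re-supplied, and each is at hand: $\overline{X}$ is an AR (hence contractible), $\overline{X}$ is an ANR (hence locally contractible at each point of $Z$), $Z$ is a $\mathcal{Z}$-set, and $X$ is $1$-ended.

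The forward implication is immediate: if $X$ is semistable then, since $X$ is $1$-ended, any two proper rays in $X$ are properly homotopic, and each $r_{z}$ is a proper ray, so $r_{z}$ is properly homotopic to $r_{z'}$ for all $z,z'\in Z$. For the converse, assume all $r_{z}$ are mutually properly homotopic. Since $X$ is noncompact, $Z\neq\varnothing$; fix $z_{0}\in Z$. Because $X$ is $1$-ended, it suffices to show that every proper ray $\omega\colon[0,\infty)\to X$ is properly homotopic to $r_{z_{0}}$, and by the hypothesis it is enough to produce some $z\in Z$ with $\omega$ properly homotopic to $r_{z}$. I would carry this out in two steps.

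\emph{Step A} disposes of a proper ray $\omega$ whose closure in $\overline{X}$ converges, i.e.\ $\overline{\omega}$ extends continuously over $[0,\infty]$ with $\overline{\omega}(\infty)=z\in Z$ (after adjoining a path we may take $\omega(0)=x_{0}$). Then $\overline{\omega}$ and $\overline{r_{z}}$ are paths in $\overline{X}$ from $x_{0}$ to $z$, so by contractibility of $\overline{X}$ there is a homotopy $K\colon[0,\infty]\times[0,1]\to\overline{X}$ from $\overline{\omega}$ to $\overline{r_{z}}$ with $K(0,s)\equiv x_{0}$ and $K(\infty,s)\equiv z$. Choose a continuous $\lambda\colon[0,\infty]\to[0,1]$ with $\lambda^{-1}(0)=\{\infty\}$ and set $\widetilde{K}(t,s)=H\!\left(K(t,s),\lambda(t)\right)$. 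Since $H(\,\cdot\,,u)$ maps $\overline{X}$ into $X$ for $u>0$, the map $\widetilde{K}$ takes $[0,\infty)\times[0,1]$ into $X$; and since $K$ is uniformly continuous with $K(\infty,\cdot)\equiv z$ and $H(\,\cdot\,,0)=\operatorname{id}$, one checks that $\widetilde{K}(t,s)\to z$ uniformly in $s$ as $t\to\infty$, so $\widetilde{K}$ is a proper homotopy in $X$. A computation of the same flavor shows that its two ends $\widetilde{K}(\,\cdot\,,0)$ and $\widetilde{K}(\,\cdot\,,1)$ are properly homotopic to $\omega$ and to $r_{z}$, respectively; hence $\omega$ is properly homotopic to $r_{z}$.

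\emph{Step B} reduces an arbitrary proper ray to the situation of Step A, and this is the main obstacle; it is exactly here that one invokes the argument of \cite{GeSw19}. In their setting $\omega$ is ``straightened'' by sliding $\omega(t)$ partway along the geodesic $[x_{0},\omega(t)]$, and the crucial point---properness of the straightening homotopy---is furnished by convexity of the CAT(0) metric, which keeps the partially straightened ray a definite distance from $x_{0}$. The naive substitute, sliding $\omega(t)$ along the contraction track $s\mapsto H(\omega(t),s)$, fails by itself, since $H$ retracts everything to $x_{0}$ and so is not proper; the remedy is to use $H$ only ``partially'' and to reroute the stretches of $\omega$ that stray out near $Z$, rel their endpoints, into a shrinking neighborhood basis of a limit point $z\in Z$. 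That such reroutings exist follows from local contractibility of the ANR $\overline{X}$ at $z$, and that they may be taken inside $X$ follows from the $\mathcal{Z}$-set approximation property; the output is a proper ray $\widehat{\omega}$ with $\overline{\widehat{\omega}}(\infty)=z$ that is properly homotopic to $\omega$, and then $\omega\simeq\widehat{\omega}\simeq r_{z}\simeq r_{z_{0}}$ by Step A and the hypothesis. Performing this construction together with the bookkeeping needed to keep every intermediate homotopy proper is the technical heart of the matter and is where one must follow \cite{GeSw19} closely; in particular, $1$-endedness (equivalently, connectedness of $Z$) is used precisely to ensure that the reindexing makes consecutive points $\omega(t)$ land in a common small neighborhood of a single $z$.
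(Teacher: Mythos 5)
The paper offers no written argument for this theorem: it simply asserts that the proof in \cite{GeSw19} ``already implies'' the generalization, so the only thing to evaluate is whether your proposed adaptation would actually work. Your forward implication and your Step A are fine (Step A is carried out correctly: simple connectivity of the AR $\overline{X}$ supplies $K$, composing with $H(\cdot,\lambda(t))$ lands in $X$, and properness follows from the uniform convergence $K(t,\cdot)\to z$). The genuine gap is the mechanism you describe for Step B. Local contractibility of $\overline{X}$ at $z$ only controls paths and loops that already lie in a small neighborhood of $z$; the stretches of $\omega$ between successive returns near $z$ have only their \emph{endpoints} there and may travel far from $z$, out near other parts of $Z$. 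More decisively, no repair of the justification can rescue a rel-endpoints rerouting: let $X$ be the inverse mapping telescope of $D^{2}\leftarrow S^{1}\xleftarrow{\times 2}S^{1}\xleftarrow{\times 2}\cdots$ (first map the boundary inclusion). This is a contractible, $1$-ended, locally finite $2$-complex, hence a proper metric AR, and it is $\mathcal{Z}$-compactified by the dyadic solenoid $Z=\varprojlim S^{1}$. Take $\omega$ to wind once around the $n$-th circle before proceeding to level $n+1$, arranged to keep returning near a fixed $z\in Z$. The stretch between levels $n$ and $n+1$, closed up by a short arc near $z$, represents $2^{\,n-m}\neq 0$ in $\pi_{1}$ of the sub-telescope from level $m$ on, whereas every loop in a small neighborhood of $z$ is trivial there; so that stretch cannot be homotoped rel endpoints into a small neighborhood of $z$ by a homotopy avoiding any prescribed compactum. (This $\omega$ \emph{is} properly homotopic to $r_{z}$, but only by a global homotopy that slides the windings off to infinity, never fixing the stretches' endpoints.)

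There is also a structural warning sign you should heed: as written, your Step B never uses the hypothesis that the $r_{z}$ are mutually properly homotopic---it enters only in the final concatenation $\omega\simeq\widehat{\omega}\simeq r_{z}\simeq r_{z_{0}}$. If stretches could always be rerouted rel endpoints far out near a boundary point, essentially the same argument would push loops near infinity off to infinity and would prove that \emph{every} $\mathcal{Z}$-compactifiable $1$-ended proper AR is semistable; the telescope above, whose fundamental pro-group at infinity is $\mathbb{Z}\xleftarrow{\times 2}\mathbb{Z}\xleftarrow{\times 2}\cdots$ (not Mittag--Leffler), shows this is false. So the hypothesis must be consumed inside the hard step, not after it. What does transfer cleanly from the CAT(0) setting is the ``reeling'' homotopy $\Lambda(t,s)=\Phi(\omega(t),s)$, where $\Phi$ is the reversed contraction: a compactness argument (for each compact $C$ there are a neighborhood $N$ of $Z$ and $s_{0}$ with $\Phi(y,s)\notin C$ whenever $y\in N\cap X$ and $s\geq s_{0}$) produces a diagonal $g(t)\to\infty$ making $\omega$ properly homotopic to $\delta(t)=\Phi(\omega(t),g(t))$, a proper ray hugging the family $\{r_{z}\}$. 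But $\delta$ still accumulates on the whole accumulation set of $\omega$ in $Z$, and converting it to a single $r_{z}$ is precisely where the hypothesis and the detailed argument of \cite{GeSw19} must be invoked; your outline replaces that step with a mechanism that fails.
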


\section{$E\mathcal{Z}$-structures and coarse-$E\mathcal{Z}$%
-structures\label{Section EZ-structures and cEZ-structures}}

A discussion of $\mathcal{Z}$-structures would be incomplete without some
mention of $E\mathcal{Z}$-structures. Here we show how the concepts introduced
in this paper can be extended to allow for coarse $E\mathcal{Z}$-structures.

\begin{definition}
Each model \emph{$\mathcal{Z}$}-geometry $\left(  \overline{X},Z,d\right)  $
determines a corresponding \emph{uniform subgroup }of $\operatorname{Isom}%
\left(  X\right)  $ defined by:%
\[
\mathcal{U}\left(  \overline{X},Z,d\right)  =\left\{  \left.  \gamma
\in\operatorname{Isom}\left(  X\right)  \,\right\vert \,\gamma\text{ extends
to a homeomorphism }\overline{\gamma}:\overline{X}\rightarrow\overline
{X}\right\}
\]

\end{definition}

A coarse analog of this is the following.

\begin{definition}
Each model \emph{$\mathcal{Z}$}-geometry $\left(  \overline{X},Z,d\right)  $
determines a corresponding \emph{coarse uniform subset }of
$\operatorname*{Coarse}\left(  X\right)  $ defined by:%

\begin{align*}
c\mathcal{U}\left(  \overline{X},Z,d\right)  =\{ \gamma\in
\operatorname*{Coarse}\left(  X\right)  \,|  &  \ \gamma\text{ extends to a
map }\overline{\gamma}:\overline{X}\rightarrow\overline{X}\\
&  \text{ that is continuous at all points of }Z\}
\end{align*}

\end{definition}

\begin{example}
Recall the model \emph{$\mathcal{Z}$}-geometries $\left(  \overline{%
%TCIMACRO{\U{211d} }%
%BeginExpansion
\mathbb{R}
%EndExpansion
^{2}},S^{1},d_{E}\right)  $ and $\left(  \overline{\mathbb{H}^{2}},S^{1}%
,d_{H}\right)  $ discussed in Example \ref{Example: planar geometries}. In
each case, the corresponding uniform subgroup is the entire isometry group. By
contrast, if we quotient out the upper half-circle in either boundary, we get
a new model \emph{$\mathcal{Z}$}-geometry where the \emph{$\mathcal{Z}$%
}-boundary is still a circle, but now many isometries do not extend. Similar
comments can be made regarding Example \ref{Example: n-dimensional geometries}.
\end{example}

%\begin{example}
%As in the above example, we can obtain model \emph{$\mathcal{Z}$}-geometries
%$\overline{%
%%TCIMACRO{\U{211d} }%
%%BeginExpansion
%\mathbb{R}
%%EndExpansion
%^{n}}$ and $\overline{\mathbb{H}^{n}}$ by adding the visual $\left(
%n-1\right)  $-sphere at infinity to the Euclidean and hyperbolic spaces. If
%$A\subseteq S^{n-1}$ is a non-cellular cell-like set (such as a Fox-Artin arc
%or the Whitehead continuum in $S^{3}$), then quotienting out by $A$ produces
%\emph{$\mathcal{Z}$}-geometries with boundaries not homeomorphic to $S^{n-1}$.
%\end{example}

Recall from Section \ref{Section: Z-structures and coarse Z-structures} that a
$\mathcal{Z}$\emph{-structure} on $G$ is a homomorphism $\phi:G\rightarrow
\operatorname{Isom}\left(  X\right)  $ such that $\ker\phi$ is finite and
$\phi\left(  G\right)  $ is both cocompact and proper. If, in addition,
$\phi\left(  G\right)  \subseteq\mathcal{U}(\overline{X}, Z, d)$, we call this
an $E\mathcal{Z}$\emph{-structure} and $Z$ an $E\mathcal{Z}$\emph{-boundary
for }$G$. We generalize this established definition as follows:

\begin{definition}
\label{Defn: cEZ-structure}A \emph{coarse }$E\mathcal{Z}$\emph{-structure} on
a group $G$ (\emph{c}$E\mathcal{Z}$\emph{-structure} for short) is a
c$\mathcal{Z}$-structure $(\overline{X},Z,d,\psi)$ with the additional
property that, for each $\gamma\in G$, the coarse equivalence $\psi\left(
\gamma\right)  :X\rightarrow X$ extends to a map $\overline{\psi\left(
\gamma\right)  }:\overline{X}\rightarrow\overline{X}$ that is continuous at
all points of $Z$. More succinctly, we require $\psi\left(  G\right)
\subseteq c\mathcal{U}\left(  \overline{X},Z,d\right)  $.
\end{definition}

\begin{remark}\label{Remark:extending boundary action to homeomorphism}
By properness, each $\overline{\psi\left(  \gamma\right)  }$ in the above
definition maps $Z$ into $Z$; moreover, by upgrading the c$\mathcal{Z}%
$-structure to a coarse near-action by continuous maps (see \cite[\S 5]%
{GuMo19}), we can require that $\overline{\psi\left(  \gamma\right)
}:\overline{X}\rightarrow\overline{X}$ be continuous. Lemma 7.4 of
\cite{GuMo19} ensures that every coarse self-equivalence of $X$ that is
boundedly close to $\operatorname*{id}_{X}$ extends (continuously and
uniquely) over $Z$ via the identity. As a result $\left.  \overline
{\psi\left(  1\right)  }\right\vert _{Z}=\operatorname*{id}_{Z}$, and for each
$\gamma\in G$, both $\overline{\psi\left(  \gamma\right)  }\circ\overline
{\psi\left(  \gamma^{-1}\right)  }$ and $\overline{\psi\left(  \gamma
^{-1}\right)  }\circ\overline{\psi\left(  \gamma\right)  }$ are the identity
when restricted to $Z$. It follows that each $\left.  \overline{\psi\left(
\gamma\right)  }\right\vert _{Z}:Z\rightarrow Z$ is a homeomorphism and, with
a little more effort, that restriction gives an actual $G$-action on $Z$. So,
if desired, this requirement could be added to Definition
\ref{Defn: cEZ-structure} without a loss of generality.
\end{remark}

\begin{theorem}
Suppose $G$ admits a c$E\mathcal{Z}$-structure $\left(  \overline
{Y},Z,d\right)  $ and $\left(  X,d^{\prime}\right)  $ is another model
geometry coarsely equivalent to $G$. Then $G$ admits a c$E\mathcal{Z}%
$-structure of the form $\left(  \overline{X},Z,d^{\prime}\right)  $.
\end{theorem}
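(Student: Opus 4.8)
The plan is to run the boundary-swapping construction behind Theorem~\ref{Theorem: Generalized cZ-boundary swapping} and then conjugate the boundary-respecting near-action on $\overline{Y}$ across it, checking that continuity at $Z$ survives the conjugation. First I would set up the coarse equivalences. Since $G$ admits a c$\mathcal{Z}$-structure on $Y$, Theorem~\ref{Theorem: Generalized Svarc-Milnor with converse} gives that $Y$ is coarsely equivalent to $G$, hence to $X$; so by \cite[Cor.~5.4]{GuMo19} (as used for Theorem~\ref{Theorem: Generalized coarse uniqueness of geometric models}) fix a continuous coarse equivalence $h\colon Y\to X$, and let $g\colon X\to Y$ be a coarse inverse of $h$ (Remark~\ref{Rk: coarse inverse}). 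Let $\psi$ denote the coarse near-action of $G$ on $Y$ witnessing the c$E\mathcal{Z}$-structure; after upgrading as in \cite[\S5]{GuMo19} (see Remark~\ref{Remark:extending boundary action to homeomorphism}) we may assume each $\overline{\psi(\gamma)}\colon\overline{Y}\to\overline{Y}$ is continuous, and in any case it is continuous at every point of $Z$, carries $Z$ into $Z$, and restricts to a homeomorphism of $Z$.

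Next I would transport the compactification. Feeding $(\overline{Y},Z,d)$ and the coarse equivalence $h$ (with its coarse inverse $g$) into the construction in the proof of Theorem~\ref{Theorem: Generalized cZ-boundary swapping} (that is, \cite[Th.~7.1]{GuMo19}) produces a controlled $\mathcal{Z}$-compactification $\overline{X}=X\sqcup Z$ of the given model geometry $(X,d')$ --- with the \emph{same} boundary $Z$ --- together with extensions $\overline{h}\colon\overline{Y}\to\overline{X}$ of $h$ and $\overline{g}\colon\overline{X}\to\overline{Y}$ of $g$, each continuous at every point of $Z$ and restricting to $\operatorname{id}_{Z}$ there. (In the construction the compactification topology on $\overline{X}$ is arranged so that $\overline{g}$ is continuous at $Z$ essentially by fiat; the substantive part of the cited theorem is that the controlled condition forces the same for $\overline{h}$.) Thus $(\overline{X},Z,d')$ is a model $\mathcal{Z}$-geometry.

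Finally I would conjugate and check everything. Define $\psi'(\gamma)=h\circ\psi(\gamma)\circ g\colon X\to X$ for $\gamma\in G$. Because $g$ is a coarse inverse of $h$, a conjugation computation exactly parallel to the one proving Theorem~\ref{Theorem: Generalized Svarc-Milnor with converse} shows that $\psi'$ is a proper, cobounded coarse near-action of $G$ on $X$; hence $(\overline{X},Z,d',\psi')$ is a c$\mathcal{Z}$-structure on $G$ (consistent with Corollary~\ref{Corollary: cZ-boundary swapping-alternate version}). For equivariance, put $\overline{\psi'(\gamma)}=\overline{h}\circ\overline{\psi(\gamma)}\circ\overline{g}\colon\overline{X}\to\overline{X}$. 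On $X$ this restricts to $h\circ\psi(\gamma)\circ g=\psi'(\gamma)$, so it extends $\psi'(\gamma)$, and on $Z$ it restricts to $\overline{\psi(\gamma)}|_{Z}$ (using $\overline{h}|_{Z}=\overline{g}|_{Z}=\operatorname{id}_{Z}$), a homeomorphism of $Z$. And at any $z\in Z$, $\overline{g}$ is continuous at $z$ with $\overline{g}(z)=z$, $\overline{\psi(\gamma)}$ is continuous at $z$, and $\overline{h}$ is continuous at $\overline{\psi(\gamma)}(z)\in Z$, so the composite $\overline{\psi'(\gamma)}$ is continuous at $z$. Therefore $\psi'(G)\subseteq c\mathcal{U}(\overline{X},Z,d')$, and $(\overline{X},Z,d',\psi')$ is the desired c$E\mathcal{Z}$-structure.

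The one genuinely delicate step is the middle paragraph: verifying that the boundary-swapping construction of \cite[Th.~7.1]{GuMo19} simultaneously yields boundary-continuous extensions of \emph{both} $h$ and a chosen coarse inverse $g$, each inducing the identity on $Z$. Once that is in hand, the rest is formal --- a composition of functions that are continuous at, and fix, the boundary is again continuous at the boundary, and the underlying c$\mathcal{Z}$-structure claims are routine conjugation estimates of the type already carried out in Section~\ref{Section: Coarse near actions}.
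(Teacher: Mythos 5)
Your proposal is correct and follows essentially the same route as the paper: obtain the model $\mathcal{Z}$-geometry $\left(\overline{X},Z,d^{\prime}\right)$ by boundary swapping (Corollary~\ref{Corollary: cZ-boundary swapping-alternate version}, i.e.\ Theorem 7.1 of \cite{GuMo19}), then transfer the boundary behavior of the near-action across the coarse equivalence. Where the paper simply cites Proposition 7.5 of \cite{GuMo19} and Remark~\ref{Remark:extending boundary action to homeomorphism} for the final extension step, you unpack it as an explicit conjugation $\overline{h}\circ\overline{\psi(\gamma)}\circ\overline{g}$ with a composition-of-continuity argument --- and the ``delicate step'' you flag (boundary-continuous extensions of both $h$ and $g$ inducing $\operatorname{id}_{Z}$) is exactly what that cited machinery supplies.
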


\begin{proof}
Corollary \ref{Corollary: cZ-boundary swapping-alternate version} ensures that
there is a c$\mathcal{Z}$-structure of the form $(\overline{X},Z,d^{\prime}%
)$for $G$. By Remark \ref{Remark:extending boundary action to homeomorphism}, we can ensure that $\overline{\psi\left(
\gamma\right)  }|_{Z}:Z\rightarrow Z$ is a homeomorphism that gives rise to an
actual $G$-action on $Z$. An application of Proposition 7.5 from \cite{GuMo19}
guarantees that the coarse near action of $G$ on $X$ extends to an action by
homeomorphisms on $Z$.
\end{proof}

\begin{corollary}
Suppose $G$ admits a c$E\mathcal{Z}$-structure. If $G$ is type F$_{\text{AR}%
}^{\text{*}}$ then $G$ admits an $E\mathcal{Z}$-structure. If, in addition,
$G$ is torsion-free, the Novikov Conjecture holds for $G$.
\end{corollary}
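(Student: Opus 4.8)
The plan is as follows. Since $G$ is type $F_{AR}^{\ast}$, it acts properly and cocompactly on some AR, and re-metrizing in the standard way (so that the action becomes geometric on a proper metric AR; this is the same maneuver that upgrades a c$\mathcal{Z}$-structure on a type $F_{AR}^{\ast}$ group to a $\mathcal{Z}$-structure), I would fix a model geometry $\left(X,d^{\prime}\right)$ carrying a geometric action $\phi:G\rightarrow\operatorname{Isom}\left(X\right)$ with $\ker\phi$ finite; by Theorem \ref{Theorem: Generalized Svarc-Milnor with converse} the orbit map $f_{X}:\gamma\mapsto\phi\left(\gamma\right)\left(x_{0}\right)$ is then a coarse equivalence $G\rightarrow X$. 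The c$E\mathcal{Z}$-hypothesis supplies a model $\mathcal{Z}$-geometry $\left(\overline{Y},Z,d_{Y}\right)$ and a proper, cobounded coarse near-action $\psi$ of $G$ on $Y$ with $\psi\left(G\right)\subseteq c\mathcal{U}\left(\overline{Y},Z,d_{Y}\right)$; using Remark \ref{Remark:extending boundary action to homeomorphism} I would upgrade so that each $\psi\left(\gamma\right)$ extends to a homeomorphism $\overline{\psi\left(\gamma\right)}$ of $\overline{Y}$, with restriction to $Z$ a genuine $G$-action. Write $f_{Y}:\gamma\mapsto\psi\left(\gamma\right)\left(y_{0}\right)$ for the corresponding orbit coarse equivalence $G\rightarrow Y$. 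The one new ingredient beyond the preceding c$E\mathcal{Z}$-swapping theorem is that the new compactification $\overline{X}=X\sqcup Z$ must be built by boundary swapping along a coarse equivalence $X\rightarrow Y$ that is \emph{coarsely $G$-equivariant} from $\phi$ to $\psi$; only then will the geometric action $\phi$ itself (rather than merely some coarse near-action) extend over $Z$.

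Concretely, fixing a coarse inverse $\overline{f}_{X}$ of $f_{X}$, I would set $h=f_{Y}\circ\overline{f}_{X}:X\rightarrow Y$. Tracking a point $x\in X$ (with $g:=\overline{f}_{X}\left(x\right)$) through the near-action estimates $\phi\left(\gamma\right)\phi\left(g\right)\left(x_{0}\right)\approx\phi\left(\gamma g\right)\left(x_{0}\right)$ and $\psi\left(\gamma\right)\psi\left(g\right)\left(y_{0}\right)\approx\psi\left(\gamma g\right)\left(y_{0}\right)$ — all implicit constants being uniform, since the control functions of $\phi,\psi,f_{X},f_{Y},\overline{f}_{X}$ and the near-action constants are fixed — shows there is a single $D$ with $d_{Y}\!\left(h\phi\left(\gamma\right)\left(x\right),\psi\left(\gamma\right)h\left(x\right)\right)\leq D$ for all $\gamma,x$. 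I would then replace $h$ by a boundedly close continuous coarse equivalence (\cite[Cor.5.4]{GuMo19}; coarse equivariance survives with an adjusted $D$) and feed it into the boundary-swapping construction of \cite[\S 7]{GuMo19}, exactly as in the proof of Theorem \ref{Theorem: Generalized cZ-boundary swapping}. This produces a controlled $\mathcal{Z}$-compactification $\overline{X}=X\sqcup Z$ — so $\left(\overline{X},Z,d^{\prime}\right)$ is a model $\mathcal{Z}$-geometry — together with a continuous coarse inverse $k:Y\rightarrow X$ of $h$, such that $h$ and $k$ extend to maps $\overline{h}:\overline{X}\rightarrow\overline{Y}$ and $\overline{k}:\overline{Y}\rightarrow\overline{X}$, each restricting to $\operatorname{id}_{Z}$.

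For each $\gamma$ the map $\Psi_{\gamma}:=k\circ\psi\left(\gamma\right)\circ h$ then extends continuously over $Z$ via $\overline{k}\circ\overline{\psi\left(\gamma\right)}\circ\overline{h}$, which restricts on $Z$ to $\left.\overline{\psi\left(\gamma\right)}\right\vert_{Z}$; and the coarse-equivariance bound, combined with the coarse-inverse estimates for $h$ and $k$, gives a single $D^{\prime}$ with $d^{\prime}\!\left(\phi\left(\gamma\right)\left(x\right),\Psi_{\gamma}\left(x\right)\right)\leq D^{\prime}$ for all $\gamma,x$. Since $\left(\overline{X},Z,d^{\prime}\right)$ is controlled, condition (\ddag) forces two $d^{\prime}$-boundedly-close proper self-maps of $X$ to have the same limiting behaviour at $Z$, so by the argument of \cite[Lemma 7.4]{GuMo19} and Remark \ref{Remark:extending boundary action to homeomorphism} the map $\phi\left(\gamma\right)$ also extends continuously over $Z$ with boundary map $\left.\overline{\psi\left(\gamma\right)}\right\vert_{Z}$. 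The extension $\overline{\phi\left(\gamma\right)}$ is a continuous bijection of the compactum $\overline{X}$, hence a homeomorphism; thus $\phi\left(G\right)\subseteq\mathcal{U}\left(\overline{X},Z,d^{\prime}\right)$, and $\left(\overline{X},Z,d^{\prime}\right)$ together with $\phi$ is an $E\mathcal{Z}$-structure on $G$ with boundary $Z$. I expect the main obstacle to be the bookkeeping in the middle step — checking that the coarse equivariance of $h$ and all the subsequent bounded-distance estimates relating $\phi$ to $\Psi_{\gamma}$ are genuinely uniform in $\gamma$, and that \cite[\S 7]{GuMo19} may be run with a prescribed coarse equivalence so that both $h$ and a coarse inverse extend as $\operatorname{id}_{Z}$. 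For the final assertion, once $G$ is torsion-free and carries an $E\mathcal{Z}$-structure, the Novikov Conjecture for $G$ follows from the principal topological application of $E\mathcal{Z}$-structures, due to Farrell and Lafont \cite{FaLa05}.
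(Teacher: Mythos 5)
Your argument is correct and follows essentially the route the paper intends: swap the c$E\mathcal{Z}$-structure onto a model geometry $(X,d')$ on which the type $F_{AR}^{*}$ hypothesis provides a geometric action (this is the paper's immediately preceding theorem), check that the isometric action is boundedly close to the conjugated coarse near-action and therefore, by the controlled condition on the compactification, extends over $Z$ with the same boundary homeomorphisms, and then quote Farrell--Lafont for the Novikov statement. The paper compresses the entire first assertion into ``clear,'' so your contribution is to supply the one genuinely nontrivial missing verification --- the uniform coarse equivariance of $h$ and the transfer of extendability from $\Psi_{\gamma}$ to $\phi(\gamma)$ --- and that verification is sound.
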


\begin{proof}
The first assertion is clear. From there one can make the conclusion regarding
the Novikov Conjecture by applying \cite{FaLa05}, which requires that $G$ be torsion-free.
\end{proof}

\section{Open questions\label{Section: Open questions}}

We close by shining a light on some open questions. To streamline the
discussion, we introduce a few more definitions.

\begin{definition}
\label{Defn. Type Z etc}A group $G$ is of \emph{Type Z }[resp., \emph{Type
$EZ$}] if it admits a $\mathcal{Z}$-structure [resp., $E\mathcal{Z}%
$-structure]. It is of \emph{Type c$Z$} [resp., \emph{Type c$EZ$}] if it
admits a $c\mathcal{Z}$-structure [resp., $cE\mathcal{Z}$-structure]\emph{.}
\end{definition}

The best-know questions about $[E]\!\mathcal{Z}$ structures have been raised
by Bestvina and Farrell-Lafont. We supplement their questions with our own variations.

\begin{question}
Does every Type F group have Type Z? Type EZ? Does every Type $F^{\ast}$ or
Type $F_{AR}^{\ast}$ group have Type Z? Type EZ?
\end{question}

With the benefit of Definition \ref{Defn. Type Z etc}, Theorem
\ref{Theorem: primary goal} can be rephrased as follows: \emph{Type cZ is a
quasi-isometry invariant. }That begs the question.

\begin{question}
Are any of the following quasi-isometry invariants: Type $F^{\ast}$, Type
$F_{AR}^{\ast}$, Type Z, Type EZ, Type cEZ?
\end{question}

As noted in Remark \ref{Remark: Failure of the West trick}, a beautifully
simple question from ANR theory asks:

\begin{question}
Are Type $F^{\ast}$ and $F_{AR}^{\ast}$ equivalent?
\end{question}

\noindent Another purely topological question asks:

\begin{question}
\label{Question: Z-compactifiability}Under what conditions does a model
geometry admit a controlled $\mathcal{Z}$-compactification? Any $\mathcal{Z}$-compactification?
\end{question}

\noindent The reader interested in Question
\ref{Question: Z-compactifiability} might want to look at \cite{ChSi76} and
\cite{Gui01}.\medskip

Finally, the usefulness of $E\mathcal{Z}$-structures in attacks on the Novikov
Conjecture makes the following question natural.

\begin{question}
Does every group $G$ which admits an $E\mathcal{Z}$-structure satisfy the
Novikov Conjecture? [Yes by \cite{FaLa05} when $G$ is torsion-free, and in
some additional cases by \cite{Ros06}; but the general question seems to be
open.]. Does every [torsion-free] group that admits a c$E\mathcal{Z}%
$-structure satisfy the Novikov conjecture?
\end{question}

\bibliographystyle{amsalpha}
\bibliography{Biblio}

\end{document}